\documentclass[10pt,a4paper, reqno]{amsart}
\usepackage{amsmath,amsfonts,amssymb,mathrsfs}
\usepackage{mathtools}
\usepackage[left=2cm,right=2cm,top=3cm,bottom=3cm,bindingoffset=0cm]{geometry}
\usepackage{amsthm}
\usepackage{amsbsy}
\usepackage{longtable}
\usepackage{esint}
\usepackage{psfrag}
\usepackage{epsfig}
\usepackage{subfig}
\usepackage{color}		
\usepackage{upgreek}
\usepackage{hyperref}
\usepackage{enumitem}
\providecommand{\doi}[1]{{\hypersetup{urlcolor=black}\href{#1}{#1}}}

\numberwithin{equation}{section}

\newcommand{\R}{\mathbb{R}}
\newcommand{\N}{\mathbb{N}}

\newtheorem{teo}{Theorem}[section]

\newtheorem{rem}[teo]{Remark}
\newtheorem{prop}[teo]{Proposition}
\newtheorem{lemma}[teo]{Lemma}
\newtheorem{cor}[teo]{Corollary}

\renewcommand{\div}{\operatorname{div}}
\renewcommand{\d}[1]{\ensuremath{\operatorname{d}\!{#1}}}

\author{Luca Barbato}
\address[L.\,Barbato]{Mathematical and Physical Sciences for Advanced Materials and Technologies, Scuola Superiore Meridionale, Largo San Marcellino 10, 80138 Napoli, Italy.}
\email{l.barbato@ssmeridionale.it}
\author{Riccardo Molinarolo}
\address[R.\,Molinarolo]{Dipartimento di Informatica, Universit\`a degli Studi di Verona, Strada le Grazie 15, 37134 Verona, Italy}
\email{riccardo.molinarolo@univr.it}

\title[Serrin-type overdetermined problem for the $p$-Laplacian with Robin boundary conditions]{Serrin-type overdetermined problem for the $p$-Laplacian with Robin boundary conditions}

\begin{document}

\begin{abstract}
Let $\Omega$ be an open bounded connected subset of $\R^N$, $N \geq 2$, of class $C^{2,\alpha}$, for $\alpha \in (0,1)$. Let $p \geq 2$ and $\beta>0$. We prove the symmetry of the solution of the $p$-torsion problem with Robin boundary condition subject to the natural overdetermined condition coming from a shape derivative argument and to an extra condition on $\beta$ and on the minimum of the principal curvatures of $\partial\Omega$. The proof is based on some new integral identities, involving the linearized operator of the $p$-Laplacian applied to the standard $P$-function. In passing we prove some other rigidity results in the spirit of Serrin's and Alexandrov's Theorems.
\end{abstract}

\maketitle

\noindent
{\bf Keywords:} Overdetermined problems, $p$-Laplacian with Robin boundary conditions, Serrin and Alexandrov Theorems, Shape derivative.\par

\bigskip

\noindent   
{{\bf 2020 Mathematics Subject Classification:} 35N25, 35A23, 53C24, 35B50, 53A10, 35J92, 49Q05}.
\bigskip

\tableofcontents

\section{Introduction}

\subsection{Overview}

In the seminal paper \cite{serrin}, Serrin proved the symmetry of solutions to a wide class of uniformly elliptic overdetermined problems, laying the foundations for a vast field of research. In its simplest formulation, the rigidity result by Serrin can be summarized as follows: if $\Omega \subset \mathbb{R}^N$, $N \ge 2$, is a bounded domain with $C^2$ boundary and $u \in C^2(\overline{\Omega})$ is a solution to 
\begin{equation}\label{intro PbD}
\begin{cases}
\Delta u = N &\text{in } \Omega,
\\
u = 0 &\text{on } \partial\Omega,
\\
u_\nu = c &\text{on } \partial\Omega,
\end{cases}
\end{equation}
where $\nu$ denotes the outer unit normal to $\partial\Omega$, $u_\nu$ is the corresponding normal derivative, and $c$ is a positive constant, then $\Omega$ must be a ball and $u$ is radial. 

Shortly after, Weinberger \cite{w} provided a simpler proof based on integral identities and the introduction of an auxiliary $P$-function, namely
\begin{equation}\label{intro eq P}
        P = \frac{1}{2} |\nabla u|^2 - u \quad\text{on }\overline{\Omega}.
\end{equation}

It is well known that Serrin's proof relies on an adaptation of the \textit{Moving Planes Method}, originally introduced by Alexandrov \cite{a58,a62} to prove the so-called {\it Soap Bubble Theorem}, which asserts that the only compact embedded $(N-1)$-dimensional smooth hypersurface in $\mathbb{R}^N$ with constant mean curvature is the sphere.

The connection between these two fundamental rigidity results has been highlighted by the alternative proof of the result of Alexandrov proposed by Reilly \cite{re77,re82}, which resembles the approach of Weinberger. For further details on this interplay and different proofs of these celebrated results, we refer to two classical surveys: Magnanini \cite{ma2017} and Nitsch and Trombetti \cite{nt}.

It is perhaps impossible to provide an exhaustive account of the vast literature stemming from these two rigidity results, as the field continues to attract significant attention. 
Without any attempts to be exhaustive, we refer to \cite{agbomaz2025, bc, bnst, bh, cgs, cs, dep23, fj, fk, fv, f, fgk, gnn, hp, p,ro} and the references therein. In particular, in a recent breakthrough result,  Figalli and Zhang \cite{fz2025} proved that the Serrin’s Theorem is still valid in the notable mild class of bounded indecomposable sets of finite perimeter (hence, also for Lipschitz domains).

The anisotropic version of the Serrin problem has been studied by Cianchi and Salani \cite{cs} and Bianchini and Ciraolo \cite{bc}, while for the anisotropic Alexandrov theorem we refer He, Li, Ma and Ge \cite{hlmg2009}, De Rosa, Kolasi\'nski, and Santilli \cite{dks2020} and Jia, Wang, Xia, and Zhang \cite{jwxz2023} (see also Morgan \cite{m2005} for $N=2$). 
Furthermore, we mention Ros \cite{ro} for the generalization to constant $r$-mean curvature hypersurface (see also Montier and Ros \cite{mr1991}). 

Although not directly related to the present paper, we briefly point out that the extension of these rigidity results to Riemannian manifolds has attracted considerable attention. For Alexandrov-type theorems, we refer to Brendle \cite{b2013} for warped product manifolds, Fogagnolo and Pinamonti \cite{fp2022} for substatic Riemannian manifolds, and Ciraolo, Roncoroni and Vezzoni \cite{crv2021} for space forms. 

On the other hand, it is well-known that Serrin's result on manifold is, in general, false: we refer to Fall and Minlend \cite{fm2015} for the construction on a compact Riemannian manifold of a counterexample given by a smooth foliation in a neighborhood of a non-degenerate critical point of the scalar curvature. Nevertheless, some rigidity results are available: we mention, for example, Kumaresan and Prajapat \cite{kp98} and Ciraolo and Vezzoni \cite{cv2019} for space form, Farina and Roncoroni \cite{fr} for warped product manifolds and Freitas, Roncoroni, and Santos \cite{frs2024} for manifolds endowed with a closed conformal vector field.

We finally mention, for the sake of completeness, that quantitative stability has been largely investigated for both the aforementioned rigidity results. For the Serrin problem, the first result in that direction has been obtained by Aftalion, Busca, and Reichel \cite{abr1999} with a proof based on a quantitative version of the moving planes method; the result has been then improved by Ciraolo, Magnanini and Vespri \cite{cmv} (see also the inspiring related papers \cite{cms2015,cms2016}). By a careful inspection of the Weinberger’s proof, Brandolini, Nitsch, Salani and Trombetti \cite{bnst08} provided a quantitative estimate of H\"older type (see also the alternative rigidity proof provided in \cite{bnst}). For the Alexandrov's Theorem, the stability has been vastly investigated: we just refer to Ciraolo and Vezzoni \cite{cv18}, Ciraolo and Maggi \cite{cm2017}, Krummel and Maggi \cite{km2017} and references therein (see also the preprint by Figalli and Zhang \cite{fz2026pre}).

A final remark is in order: Magnanini and Poggesi devoted a series of papers to prove sharp quantitative stability estimates for both problems, by exploiting some new integral identities, see \cite{mapo19,mp2020,mp23}. As we will describe, our strategy of proof resembles their approach, see Subsection \ref{subsec strategy} below.

To complete this overview, we also point out that Serrin's problem has been deeply investigated for different operators and under different boundary conditions. Fully nonlinear operators of non-divergence form, such as $k$-Hessian operators, have been analyzed by Brandolini, Nitsch, Salani, and Trombetti \cite{bnst}, by Brandolini, Gavitone, Nitsch, Trombetti \cite{bgnt}, and by Silvestre and Sirakov \cite{ss}. 

In particular, the $p$-Laplacian operator has been studied by Garofalo and Lewis \cite{gl} by a Weinberger’s approach, by Brock and Henrot \cite{bh} for $p\geq 2$ via Steiner symmetrization, and by Damascelli and Pacella \cite{dp} for $1< p< 2$ by moving plane method. In the latter regime, we also mention the alternative approach by integral identities by Colasuonno and Ferrari \cite{CoFe20} from which our analysis stems (see also the analogous result on complete noncompact Riemannian manifolds provided by Ruan, Huang, and Chen \cite{RuHuCh24}). Finally, for the $\infty$-Laplacian we refer to Buttazzo and Kawohl \cite{bk2011}. 

On the other hand, concerning different types of boundary conditions, we mention the analysis performed in \cite{mmp} and the introduction of the so-called \textit{Reverse Serrin Problem}, where essentially the non-zero constant Neumann boundary condition is imposed and the Dirichlet trace is allowed to oscillate. For non constant boundary value data, see Dom\'inguez-V\'azquez, Enciso, and Peralta-Salas \cite{dep23}. 

Our result is inspired by a result of Gavitone and the second author \cite{gamo26}. There, the authors considered an overdetermined problem for the torsion function subject to Robin boundary conditions, namely for $\Omega \subset \R^N$ a smooth bounded domain and $\beta > 0$, let $u \in C^2(\overline{\Omega})$ be a solution
\begin{equation} \label{intro eq: problem_P p=2}
    \begin{cases}
    \Delta u = N & \text{in } \Omega, \\
    u_\nu  + \beta u = 0 & \text{on } \partial\Omega,
    \end{cases}
\end{equation}
subject to the overdetermined condition
\begin{equation}\label{intro eq overdetermined cond p=2}
    |\nabla u|^2 + 2Nu - 2(u_\nu)^2 + (N-1) \beta u^2 \mathrm{M}_{\partial\Omega} = \Tilde{C} \quad \text{in } \Omega,
\end{equation}
where $\mathrm{M}_{\partial\Omega}$ is the mean curvature of $\partial \Omega$ and $\Tilde{C}$ is a constant. The overdetermined condition \eqref{intro eq overdetermined cond p=2} comes from a shape derivative argument under volume constrain contained in \cite{bw} for the energy associated to \eqref{intro eq: problem_P p=2}, namely 
\begin{equation*}
    \mathcal{E}_{\beta}(u) = \int_{\Omega} |\nabla u|^2 \,\d{x} + 2 \int_{\Omega} N u \,\d x + \beta  \int_{\partial\Omega} u^2 \, \d{}\mathcal{H}^{N-1},
\end{equation*}
(see also the conjecture in \cite[Thm. 4.1 \& Ex. 4.1]{bw}). More precisely, a domain $\Omega$ is a critical point for the energy $ \mathcal{E}_{\beta}(u)$ if and only if \eqref{intro eq overdetermined cond p=2} holds. Analogously to the Serrin's result \cite{serrin}, in \cite{gamo26} the authors took a first step toward resolving that conjecture by proving that, if $\Tilde{C} = \Tilde{C}_0$, where $\Tilde{C}_0 = -R^{2} - \frac{R}{\beta} (N + 1)$, then $\Omega$ is a ball and $u$ is radial. Notice that, formally for $\beta \to +\infty$, \eqref{intro eq: problem_P p=2}-\eqref{intro eq overdetermined cond p=2} converges to \eqref{intro PbD}, therefore, a posteriori, it is not surprising that the same $P$-function given by \eqref{intro eq P} works for both problems.

The goal of this paper is to generalize to the $p$-Laplacian for $p \geq 2$ and with Robin boundary conditions the results contained in \cite{gamo26}, proving new integral identities in the spirit of \cite{mapo19,mp2020,mp23}. 

\subsection{Main result}

In this paper we consider the Serrin’s problem for
the $p$-torsion function with Robin boundary conditions. More precisely, let $\Omega$ be an open bounded connected subset of $\R^N$ with $C^{2,\alpha}$ boundary, for $\alpha \in (0,1)$, and let us fix once for all two parameters
\[
\beta >  0 \text{ and } p \geq 2.
\]
Let us consider the problem
\begin{equation} \label{intro eq:problem_p}
    \begin{cases}
    \Delta_p u = N & \text{in } \Omega, \\
    |\nabla u|^{p-2} u_\nu  + \beta |u|^{p-2}u = 0 & \text{on } \partial\Omega,
    \end{cases}
\end{equation}
where $\Delta_p u = \div(|\nabla u|^{p-2}\nabla u)$ is the $p$-Laplacian operator, and $\nu$ is the outer unit normal vector field on $\partial\Omega$. 

A shape derivative argument under volume constraint for the energy associated to problem \eqref{intro eq:problem_p} (cf. Appendix \ref{sec: appendix shape}), yields to consider the following overdetermined condition:
\begin{equation}\label{intro eq overdetermined cond p}
   |\nabla u|^p + pNu - p |\nabla u|^{p-2} (u_\nu)^2 + \beta(N-1) |u|^p \mathrm{M}_{\partial\Omega} = C  \quad\text{on } \partial\Omega,
\end{equation}
for a constant $C\in \R$ (cf. \eqref{intro eq overdetermined cond p=2}). In particular, one may investigate if an analogous result to \cite[Thm. 1]{gamo26} holds for the $p$-Laplacian operator under suitable assumptions.
Define the constants
\begin{equation}\label{intro eq R}
        R = \frac{N|\Omega|}{|\partial\Omega|}, 
\end{equation}
and 
\begin{equation}\label{intro eq C0}
C_0 = (1-p)R^{\frac{p}{p-1}} - \left( \frac{R}{\beta} \right)^{\frac{1}{p-1}} \big( (p-1)N + 1 \big).    
\end{equation}
If $\kappa_i$ for $i=1, \dots,N-1$ denotes the $i^{\text{th}}$ principal curvature of $\partial \Omega$, set
\begin{equation}\label{intro k min}
    \kappa_{\min} = \min_{\partial \Omega} \min_{i=1,\dots,N-1}  \kappa_i(x).
\end{equation}
Denote by $\mathrm{M}_{\partial\Omega}$ the mean curvature of $\partial \Omega$. Our main result is the following.

\begin{teo}\label{intro main thm rigidity serrin}
    Let $\Omega$ be an open bounded connected subset of $\R^N$ with $C^{2,\alpha}$ boundary, for $\alpha \in (0,1)$. Let $R, C_0$ and $\kappa_{\min}$ be given by \eqref{intro eq R},\eqref{intro eq C0} and \eqref{intro k min}, respectively. Assume that
    \begin{equation}\label{intro eq condition kmin beta}
        \kappa_{\min} +  \frac{3p-4}{2} \beta^{\frac{1}{p-1}} > 0.
    \end{equation}
    Let $u \in C^{1,\gamma}(\overline{\Omega})$, for $\gamma \in (0,1)$, be a solution to
    \begin{equation}\label{intro eq: overdetermined problem}
    \begin{cases}
    \Delta_p u = N & \text{in } \Omega, \\
    |\nabla u|^{p-2} u_\nu  + \beta |u|^{p-2}u = 0 & \text{on } \partial\Omega, \\
    |\nabla u|^p + pNu - p |\nabla u|^{p-2} (u_\nu)^2 + \beta(N-1) |u|^p \mathrm{M}_{\partial\Omega} = C & \text{on } \partial\Omega,
    \end{cases}
    \end{equation}
    with $C \geq C_0$ any constant. Then, $\Omega$ is a ball of radius $R$ and $u$ is radially symmetric.
\end{teo}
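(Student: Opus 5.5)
We follow the Weinberger--Magnanini--Poggesi strategy built on the $P$-function
\[
P=\frac{p-1}{p}|\nabla u|^p-u ,
\]
which is the natural analogue of \eqref{intro eq P} for $\Delta_p u=N$. The radial solution $u_0(x)=\frac{p-1}{p}|x|^{\frac{p}{p-1}}+c$ in the ball of radius $R$ makes $P$ constant, and the constant $c$ is fixed by the Robin condition. Before starting we note that $u<0$ in $\overline\Omega$. Indeed, testing the equation with $u^+$ gives $\int|\nabla u^+|^p+\beta\int_{\partial\Omega}(u^+)^p=-N\int u^+$, so $u^+\equiv0$, and the strong maximum principle improves this to $u<0$. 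On $\partial\Omega$ the Robin condition then reads $|\nabla u|^{p-2}u_\nu=\beta|u|^{p-1}>0$, so $u_\nu=\beta^{1/(p-1)}|u|$ and $|\nabla u|\neq0$ there. By standard regularity $u\in C^{1,\gamma}(\overline\Omega)\cap C^{2}$ away from the critical set, and $|\nabla u|^{p-2}\nabla u\in W^{1,2}$ because $p\ge2$. This justifies the integrations by parts below, possibly after approximating by regularized problems as in \cite{CoFe20}.

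\textbf{Step 1 (interior inequality).} We show that $P$ is a subsolution of the linearized operator $L\phi=\div\big(|\nabla u|^{p-2}(\mathrm{Id}+(p-2)\frac{\nabla u\otimes\nabla u}{|\nabla u|^2})\nabla\phi\big)$. One computes
\[
L P \ge |\nabla u|^{2p-4}\Big(\big|(D^2u)_{A}\big|^2-\tfrac{1}{N}(\operatorname{tr}(D^2u)_{A})^2\Big)\ge 0,
\]
where $(D^2u)_A$ denotes the Hessian symmetrized with respect to the anisotropic weight. Equality holds if and only if $(D^2u)_A$ is a multiple of the identity. We then integrate $LP$ over $\Omega$ to obtain the integral identity
\[
\int_\Omega LP\,\d x=\int_{\partial\Omega}|\nabla u|^{p-2}\big(P_\nu+(p-2)u_\nu^{-1}\ldots\big)\,\d\mathcal H^{N-1}.
\]

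\textbf{Step 2 (boundary term).} On $\partial\Omega$ we decompose $\Delta_p u=N$ into its normal and tangential parts:
\[
(p-1)|\nabla u|^{p-2}u_{\nu\nu}+|\nabla u|^{p-2}\big((N-1)\mathrm M_{\partial\Omega}u_\nu+\Delta_{\partial\Omega}u\big)+\text{terms in }\nabla_T u=N .
\]
Since $u$ does not vanish on $\partial\Omega$, the tangential gradient $\nabla_T u$ is not zero. The Robin condition gives $\nabla_T u_\nu=\beta^{1/(p-1)}\nabla_T|u|$, which lets us express $\nabla_T u_\nu$ through $\nabla_T u$. The tangential part of the Hessian contains $-\kappa_i$ times the tangential gradient, and integrating $\Delta_{\partial\Omega}$ by parts turns the cross terms into a quadratic form in $\nabla_T u$ whose coefficients involve $\kappa_i+c_p\beta^{1/(p-1)}$. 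We expect hypothesis \eqref{intro eq condition kmin beta}, with its constant $\frac{3p-4}{2}$, to come exactly from requiring this quadratic form to be signed after a Young inequality. We then substitute the overdetermined condition in \eqref{intro eq: overdetermined problem} to eliminate the $\mathrm M_{\partial\Omega}$ terms. Finally we use the Pohozaev-type identities $\int_\Omega \Delta_p u=N|\Omega|=\int_{\partial\Omega}|\nabla u|^{p-2}u_\nu$ and the one obtained by testing with $x\cdot\nabla u$. Together these should bound the boundary integral by a nonpositive multiple of $C_0-C$, which is $\le0$ when $C\ge C_0$.

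\textbf{Step 3 (conclusion).} Combining Steps 1 and 2, $0\le\int_\Omega LP\le$ (a nonpositive quantity), so equality holds throughout. Then $(D^2u)_A$ is proportional to the identity and $\nabla_T u=0$ on $\partial\Omega$. Hence $u$ is constant on $\partial\Omega$, and the Robin condition makes $u_\nu$ constant there as well. We are then reduced to Serrin's problem for the $p$-Laplacian, and Garofalo--Lewis \cite{gl} or Brock--Henrot \cite{bh} gives that $\Omega$ is a ball. Its radius is $R$ by \eqref{intro eq R}, and $u$ is radial.

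\textbf{Main obstacle.} The main obstacle is Step 2. Three things must work at once there: the tangential-gradient boundary terms must be controlled using only \eqref{intro eq condition kmin beta}; the degenerate weight must not spoil the integration by parts; and the constants must produce exactly $C_0$. I would first verify the algebra on the radial solution and calibrate the Young inequality constants against it.
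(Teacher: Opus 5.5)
Your Step 1 is essentially the paper's Lemma \ref{lemma positivity L(P)}. The proof, however, has a genuine gap at Step 2, which is the heart of the argument. You only sketch it, and the concrete ingredients you do name are wrong for $p>2$.

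\textbf{The Robin condition is mishandled.} The condition $|\nabla u|^{p-2}u_\nu=\beta|u|^{p-1}$ gives $u_\nu=\beta^{1/(p-1)}|u|$ only where $\nabla_\top u=0$, which is what you are trying to prove. In general it only gives $|\nabla u|\ge\beta^{1/(p-1)}|u|\ge u_\nu$. Consequently $\nabla_\top u_\nu=\beta^{1/(p-1)}\nabla_\top|u|$ is false. Differentiating the Robin condition along a principal direction $\theta$ gives (Lemma \ref{lem:tangential})
\[
|\nabla u|^{p-2}\langle D^2u\,\nu,\theta\rangle=-(p-2)|\nabla u|^{p-4}u_\nu\langle D^2u\,\nabla u,\theta\rangle-|\nabla u|^{p-2}\mathrm{I\!I}(\nabla_\top u,\theta)-(p-1)\beta|u|^{p-2}\langle\nabla_\top u,\theta\rangle .
\]
The $(p-2)$-term comes from differentiating $|\nabla u|^{p-2}$, and it is exactly what cancels two other contributions: the anisotropic part of $\mathcal L_u$, and the term produced by integrating $|\nabla u|^{2p-4}u_\nu\Delta_\top u=-\beta|\nabla u|^{p-2}|u|^{p-2}u\,\Delta_\top u$ by parts. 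Combined with Reilly's formula, this yields the exact identity \eqref{eq: fundamental identity}, with coefficient $2(p-1)\beta$ in front of $\int_{\partial\Omega}|\nabla u|^{p-2}|u|^{p-2}|\nabla_\top u|^2$. With your formula this bookkeeping does not close. Separately, ``$u\neq0$ on $\partial\Omega$, so $\nabla_\top u\neq0$'' is a non sequitur.

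\textbf{The mechanism producing $C_0$ and $\frac{3p-4}{2}$ is missing.} To cancel $-(N-1)\int_{\partial\Omega}|\nabla u|^{2p-4}u_\nu^2\mathrm{M}_{\partial\Omega}$, the deficit must be weighted by $\beta|u|^{p-2}$, since $\beta|u|^{p-2}\cdot\beta|u|^p=|\nabla u|^{2p-4}u_\nu^2$ on $\partial\Omega$. For $p>2$ this weight produces quantities that the equation does not determine, so they must be estimated rather than computed. Examples are $\int_{\partial\Omega}|u|^{p-2}$ and $\beta\int_{\partial\Omega}|\nabla u|^{p-2}|u|^{p-2}u_\nu^2$. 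Your Pohozaev test with $x\cdot\nabla u$ only adds $\langle x,\nu\rangle$-boundary terms that appear nowhere in the target.

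The paper instead uses two inequalities, both relying on $p\ge2$:
\begin{itemize}
\item Jensen: $\int_{\partial\Omega}|u|^{p-2}\le (R/\beta)^{\frac{p-2}{p-1}}|\partial\Omega|$, obtained from $\int_{\partial\Omega}|u|^{p-1}=R|\partial\Omega|/\beta$.
\item The pointwise bound $1-\rho^{\frac{p-2}{p-1}}\le\frac{p-2}{2(p-1)}\rho^{-\frac{p}{p-1}}(1-\rho^2)$ for $\rho=u_\nu/|\nabla u|$.
\end{itemize}
The threshold in \eqref{intro eq condition kmin beta} comes from exact bookkeeping, not from a Young inequality. Splitting $|\nabla u|^p|u|^{p-2}$ into tangential and normal parts lowers the coefficient $2(p-1)$ to $2p-3$. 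The $\rho$-inequality then costs $\frac{p-2}{2}\beta\int_{\partial\Omega}|\nabla u|^{p-2}|u|^{p-2}|\nabla_\top u|^2$, which lowers $2p-3$ to $\frac{3p-4}{2}$. The same step also produces the term $(p-1)\beta^{\frac1{p-1}}\int_{\partial\Omega}(|\nabla u|^{p-2}u_\nu-R)^2$.

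\textbf{Sign slip.} The final right-hand side is $-\beta(C-C_0)\int_{\partial\Omega}|u|^{p-2}$. This is a nonpositive multiple of $C-C_0$, not of $C_0-C$.

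\textbf{Step 3.} From $\nabla_\top u=0$ alone, $u$ is constant only on each component of $\partial\Omega$, so the reduction to the Dirichlet Serrin problem is incomplete. It is simpler to argue as the paper does. Equality in Newton's inequality gives $\nabla(|\nabla u|^{p-2}\nabla u)=I_N$, hence $|\nabla u|^{p-2}\nabla u=x-z$, so $u$ is radial. Vanishing of the squared term gives $|\nabla u|^{p-2}u_\nu=R$ on all of $\partial\Omega$.
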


\subsection{Strategy of the proof}\label{subsec strategy}

The first step consists in proving a \textit{Fundamental Identity} regarding the linearized operator $\mathcal{L}_u$ (cf. \eqref{eq: L(phi)_function}) of the $p$-Laplacian operator applied to the $P$-function given by
\begin{equation}\label{intro eq:P_function p}
    P(u) = \frac{2(p-1)}{p}|\nabla u|^p - 2u \quad \text{on } \overline{\Omega},
\end{equation}
where $u \in C^{1,\gamma}(\overline{\Omega})$ is a solution of \eqref{intro eq:problem_p}. It simply reads (see Theorem \ref{teo:fundamental}):
\begin{equation}\label{intro eq: fundamental identity}
    \begin{aligned}
    \frac{1}{2(p-1)}\int_\Omega \mathcal{L}_u(P) \d{x} +\int_{\partial\Omega} |\nabla u|^{2p-4} \mathrm{I\!I}(\nabla_\top u,\nabla_\top u) \d{}\mathcal{H}^{N-1}+ 2(p-1) \beta\int_{\partial\Omega} |\nabla u|^{p-2}|u|^{p-2}|\nabla_\top u|^2 \d{}\mathcal{H}^{N-1} \\= (N-1) \int_{\partial\Omega} |\nabla u|^{p-2} u_\nu \d{}\mathcal{H}^{N-1}
    - (N-1) \int_{\partial\Omega} |\nabla u|^{2p-4} (u_\nu)^2 \mathrm{M}_{\partial\Omega} \d{}\mathcal{H}^{N-1},
    \end{aligned}
\end{equation}
where $\mathrm{I\!I}(\cdot,\cdot)$ denotes the second fundamental form on $\partial\Omega$ and $\nabla_\top u$ the tangential gradient of $u$. The proof is essentially based on integration by parts, Reilly's identity for the $p$-Laplacian operator and a suitable relation coming from the tangential derivation of the Robin boundary condition, see Lemma \ref{lem:tangential} below.
\\
Then, we rewrite \eqref{intro eq: fundamental identity} in order to make the deficit of the overdetermined condition appear, namely the quantity
\begin{equation*}
    |\nabla u|^p + pNu - p |\nabla u|^{p-2} (u_\nu)^2 + \beta(N-1) |u|^p \mathrm{M}_{\partial\Omega} - C  \quad\text{on } \partial\Omega.
\end{equation*}
We obtain a new general identity, which we believe has his own independent interest:
\begin{equation}\label{intro eq: fundamental identity for serrin}
    \begin{aligned}
    &\frac{1}{2(p-1)}\int_\Omega \mathcal{L}_u(P) \d{x} +\int_{\partial\Omega} |\nabla u|^{2p-4} \mathrm{I\!I}(\nabla_\top u,\nabla_\top u) \d{}\mathcal{H}^{N-1} 
    \\
    &+ \frac{3p-4}{2} \beta \int_{\partial\Omega} |\nabla u|^{p-2}|u|^{p-2}|\nabla_\top u|^2 \d{}\mathcal{H}^{N-1} 
    + (p-1) \beta^{\frac{1}{p-1}} \int_{\partial\Omega} \Big(|\nabla u|^{p-2} u_\nu - R \Big)^2  \d{}\mathcal{H}^{N-1}  
    \\
    &
    + (p-1) \beta^{\frac{1}{p-1}}  \int_{\partial\Omega}  \left( \frac{1}{2} \frac{p-2}{p-1} \left(\frac{u_\nu}{|\nabla u|}\right)^{-\frac{p}{p-1}} \left(1- \left(\frac{u_\nu}{|\nabla u|}\right)^2 \right) - \left(1- \left(\frac{u_\nu}{|\nabla u|}\right)^{\frac{p-2}{p-1}} \right) \right) (|\nabla u|^{p-2} u_\nu)^2 \d{}\mathcal{H}^{N-1} 
    \\
    &
    + \left( (p-1) \beta^{\frac{1}{p-1}} + (p-1)N + 1 \right) R^{\frac{1}{p-1}}  \int_{\partial\Omega} \Big( R^{\frac{p-2}{p-1}} - (|\nabla u|^{p-2} u_\nu)^{\frac{p-2}{p-1}} \Big)  \d{}\mathcal{H}^{N-1}
    \\
    &
    = - \beta \int_{\partial\Omega} \Big( |\nabla u|^p + pNu - p |\nabla u|^{p-2} (u_\nu)^2 + \beta(N-1) |u|^p \mathrm{M}_{\partial\Omega}  - C_0 \Big) |u|^{p-2} \d{}\mathcal{H}^{N-1},  
    \end{aligned}
\end{equation}
for any function $u \in C^{1,\gamma}(\overline{\Omega})$ solution of \eqref{intro eq:problem_p} (see Theorem \ref{teo: identity serrin}).

Then, we devote a series of lemmas to prove the sign of the left-hand side of \eqref{intro eq: fundamental identity for serrin} under the hypothesis \eqref{intro eq condition kmin beta}. 
Analogously to the case $p=2$, in which the related $P$-function is well-known to be subharmonic, in Lemma \ref{lemma positivity L(P)}  we prove that if $P$ is given by \eqref{intro eq:P_function p}, then
\[
\mathcal{L}_u(P) \geq 0 \quad \text{and} \quad \mathcal{L}_u(P)=0 \iff u \text{ is radial}.
\]
We also point out the novelty of both the convexity inequality provided by Lemma \ref{lem:jensen ineq} and the ``ad hoc'' inequality proven in Lemma \ref{lem:rho ineq}: these two results, heavily relying on the condition $p\geq 2$, dictate the sign of the fifth and sixth terms of the left-hand side of \eqref{intro eq: fundamental identity for serrin}. Notice in particular that both integrals are null for $p=2$.

Identity \ref{intro eq: fundamental identity for serrin} allows us to prove a quite general Serrin-type rigidity result (see Theorem \ref{thm rigidity serrin integral}) in which we only assume \eqref{intro eq condition kmin beta} and
\begin{equation}\label{intro eq: integral condition rigidity}
    \int_{\partial\Omega} \Big( |\nabla u|^p + pNu - p |\nabla u|^{p-2} (u_\nu)^2 + \beta(N-1) |u|^p \mathrm{M}_{\partial\Omega}  - C_0 \Big) |u|^{p-2} \d{}\mathcal{H}^{N-1} \geq 0.
\end{equation}
It is then clear that Theorem \ref{intro main thm rigidity serrin} follows at once as a corollary from \eqref{intro eq: integral condition rigidity}, once the overdetermined condition in \eqref{intro eq: overdetermined problem} for $C \geq C_0$ is in force. Moreover, since $\beta > 0$, if one assumes $\Omega$ is convex, then \eqref{intro eq condition kmin beta} is clearly fulfilled,  and again rigidity follows simply assuming \eqref{intro eq: integral condition rigidity} (see Corollary \ref{cor rigidity serrin integral}).

In passing, from \eqref{intro eq: fundamental identity}, we also deduce a different identity, namely
\begin{equation}\label{intro eq alexandrov}
    \begin{aligned}
    \frac{1}{2(p-1)}\int_\Omega \mathcal{L}_u(P) \d{x} 
    +\int_{\partial\Omega} |\nabla u|^{2p-4} \mathrm{I\!I}(\nabla_\top u,\nabla_\top u) \d{}\mathcal{H}^{N-1}
    + 2(p-1)\beta \int_{\partial\Omega} |\nabla u|^{p-2}|u|^{p-2}|\nabla_\top u|^2 \d{}\mathcal{H}^{N-1} 
    \\
    + (N-1) \int_{\partial\Omega} \Big(|\nabla u|^{p-2} u_\nu - R \Big)^2  \mathrm{M}_{0} \d{}\mathcal{H}^{N-1} 
    =
    - (N-1) \int_{\partial\Omega} |\nabla u|^{2p-4} (u_\nu)^2 (\mathrm{M}_{\partial\Omega} - \mathrm{M}_{0} ) \d{}\mathcal{H}^{N-1}.  
    \end{aligned}
\end{equation}
Identity \eqref{intro eq alexandrov} provides an alternative proof of the Alexandrov's Theorem, using a solution $u$ of \eqref{intro eq:problem_p} (see Theorems \ref{teo: identity soap} \& \ref{thm alexandrov}).

A final remark is in force: a fundamental step in our analysis is given by the Hold\"er regularity of the weak solution $u$ of problem \eqref{intro eq:P_function p}. In particular, we took advantage of the $C^{1,\gamma}$ regularity of $u$, of the constant sign of $u$ in $\overline{\Omega}$, and of the absence of critical points of $u$ on $\partial\Omega$, which, essentially by standard elliptic regularity, implies that $u$ is of class $C^{2,\min\{\alpha,\gamma\}}$ in an $\varepsilon$-neighborhood of $\partial\Omega$. Thus, $\nabla^2 u \in C^0(\partial\Omega)$ and the set $Z$ of critical point of $u$ in $\Omega$ has zero Lebesgue measure. In particular, this justifies the integration of $\mathcal{L}_u(P)$ over $\Omega$.

\subsection{Structure of the paper}
The paper is organized as follows. In Section \ref{sec: notation} we introduce the notation and recall some basic facts from differential geometry, Divergence Theorem on hypersurfaces, Reilly's identity and Newton's inequality. In Section \ref{sec: properties and regularity} we summarize standard properties and regularity results for the weak solution of the $p$-torsion problem with Robin boundary conditions and enhanced regularity in a neighborhood of the boundary and we recall the structure of radial solutions on a ball. In Section \ref{sec: integral identities} we present some key integral identities along with the fundamental tangential derivation of the Robin boundary condition. Section \ref{sec: rigidity} is devoted to prove some rigidity results and Theorem \ref{intro main thm rigidity serrin}. Finally, two Appendices conclude the paper: Appendix \ref{sec: appendix reg} contains the proof of Theorem \ref{thm regularity C2} of the improved regularity of the weak solution in a neighborhood of the boundary, while Appendix \ref{sec: appendix shape} is devoted to describe a shape derivative argument under volume constraint leading to the natural overdetermined condition for the $p$-torsion problem with Robin boundary conditions.
 
\section{Notation and preliminaries} \label{sec: notation}

We recall some standard notation used throughout the paper. 

Let $N \in \N $, with $N\ge 2$. We denote by $x = (x_1,\dots,x_N)$ a point in $\R^N$ and we endowed the Euclidean space $\R^N$ with the standard scalar product denoted by $\langle \cdot,\cdot \rangle$. Let $\Omega \subset \R^N$ be an open and bounded domain with a boundary of class $C^{1,1}$. We denote by $\mathrm{M}_{\partial\Omega}$ the mean curvature of $\partial \Omega$, namely
\[
\mathrm{M}_{\partial\Omega}=\frac{1}{N-1} \sum_{i=1}^{N-1} \kappa_i,
\]
where $\kappa_i$ for $i=1, \dots,N-1$ denotes the $i^{\text{th}}$ principal curvature of $\partial \Omega$. Here the convention is that $\kappa_i$ are oriented so that convex sets have positive curvatures.

Let $\nu$  be the unit outer normal to the boundary. For a function $f\in C^1(\overline{\Omega})$, we denote by $\nabla f = \left(\frac{\partial f}{\partial x_1},\dots,\frac{\partial f}{\partial x_N} \right)$ the gradient of $f$, while we denote by $\nabla_\top f$ the tangential gradient of the function $f$ on $\partial \Omega$, given by
\begin{equation*}
\nabla_\top f= \nabla f- f_\nu \, \nu, \quad \text{where } f_\nu = \langle \nabla f, \nu \rangle.
\end{equation*}
Moreover, for any smooth vector field $\mathbf{u} \colon \Omega \to \R^N$, the tangential divergence $\div_{\top} \mathbf{u}$ is given by
\begin{equation*}
\div_{\top} \mathbf{u} =  \div \mathbf{u} - \langle \nabla \mathbf{u} \,\nu , \nu\rangle, 
\end{equation*}
where $\nabla \mathbf{u} \in \R^{N \times N}$. In particular, as standard, for $u \in C^{2}(\partial \Omega)$ we set $\Delta_{\top} u := \div_{\top} \nabla_{\top} u$.
We retain classical notations and definitions for standard Sobolev and Hold\"er spaces (see \cite{gitr83}). 

We recall the statement of the Divergence Theorem on $\partial \Omega$ (see \cite{giusti84,li12,hepi18}).

\begin{teo}[Divergence Theorem on $\partial \Omega$]\label{thm: divergence part Om}
    Let $\Omega$ be an open, bounded subset of $\R^N$ with $C^{1,1}$ boundary. Let $f\in C^1(\partial \Omega) $ and $\mathbf{u} \in C^{1}(\partial \Omega, \R^N)$. Then the following integration by parts formula holds:
    \begin{equation*}
    \int_{\partial \Omega} f \div_{\top} \mathbf{u} \, \d{}\mathcal{H}^{N-1} =-\int_{\partial \Omega} \langle \mathbf{u},\nabla_{\top} f\rangle \, \d{}\mathcal{H}^{N-1} + (N-1)\int_{\partial \Omega} f \langle \mathbf{u},\nu\rangle \mathrm{M}_{\partial\Omega} \, \d{}\mathcal{H}^{N-1}.
    \end{equation*} 
    In particular, for $f\in C^1(\partial \Omega)$ and $u \in C^{2}(\partial \Omega)$, the following formula holds:
    \begin{equation*}
        \int_{\partial \Omega} f \Delta_{\top} u \, \d{}\mathcal{H}^{N-1} =-\int_{\partial \Omega} \langle \nabla_{\top}u,\nabla_{\top} f\rangle \,\d{}\mathcal{H}^{N-1}.
    \end{equation*} 
\end{teo}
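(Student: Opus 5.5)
The plan is to reduce the statement to the classical Stokes theorem on the closed hypersurface $\partial\Omega$. To do so, I would split $\mathbf{u}$ into its tangential and normal parts and treat the normal part with the identity $\div_\top \nu = (N-1)\mathrm{M}_{\partial\Omega}$.

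First I note that $\div_\top \mathbf{u}$ and $\nabla_\top f$ involve only derivatives along $\partial\Omega$. They are therefore independent of the $C^1$ extension of $f,\mathbf{u}$ to a neighbourhood of $\partial\Omega$, and I may fix any such extension. A direct computation from the definition gives the Leibniz rule
\[
\div_\top (f\mathbf{u}) = f \div_\top \mathbf{u} + \langle \mathbf{u}, \nabla_\top f\rangle .
\]
So it suffices to prove
\[
\int_{\partial\Omega} \div_\top \mathbf{w}\,\d{}\mathcal{H}^{N-1} = (N-1)\int_{\partial\Omega}\langle \mathbf{w},\nu\rangle \mathrm{M}_{\partial\Omega}\,\d{}\mathcal{H}^{N-1}
\]
for $\mathbf{w}=f\mathbf{u}$. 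I would then write $\mathbf{w} = \mathbf{w}_\top + g\,\nu$ with $g=\langle \mathbf{w},\nu\rangle$.

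For the normal part, the Leibniz rule gives $\div_\top(g\nu) = g\,\div_\top \nu + \langle \nu,\nabla_\top g\rangle = g\,\div_\top\nu$, since $\nabla_\top g \perp \nu$. Moreover $\div_\top \nu$ is the trace of the Weingarten map, i.e. $\sum_i \kappa_i = (N-1)\mathrm{M}_{\partial\Omega}$ with the stated sign convention. This is computed in a principal frame, extending $\nu$ as the gradient of the signed distance, so that $\langle \nabla\nu\,\nu,\nu\rangle=0$. For the tangential part $\mathbf{w}_\top$, the quantity $\div_\top \mathbf{w}_\top$ is the intrinsic Riemannian divergence on $\partial\Omega$. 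Hence $\int_{\partial\Omega}\div_\top \mathbf{w}_\top = 0$ by Stokes' theorem on the compact manifold $\partial\Omega$, which has no boundary. Summing the two contributions and using the Leibniz rule yields the first formula.

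The second formula follows by taking $\mathbf{u}=\nabla_\top u$. This field is tangential, so $\langle \mathbf{u},\nu\rangle=0$ and the curvature term vanishes.

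The main obstacle is regularity: with only a $C^{1,1}$ boundary, $\nu$ is merely Lipschitz, $\mathrm{M}_{\partial\Omega}$ exists only $\mathcal{H}^{N-1}$-a.e., and $\mathbf{w}_\top$ is only Lipschitz. I would first try to run the argument for $C^2$ boundaries and then pass to the limit. To do this, I would approximate $\partial\Omega$ by $C^\infty$ hypersurfaces given locally as graphs of mollified $C^{1,1}$ functions. Their normals converge uniformly and their second fundamental forms converge a.e. with uniform bounds, so dominated convergence lets both sides pass to the limit. Alternatively, one can apply Stokes' theorem directly to the Lipschitz $(N-2)$-form $\iota_{\mathbf{w}_\top}\d{}\mathcal{H}^{N-1}$ on a $C^{1,1}$ manifold via Rademacher's theorem.
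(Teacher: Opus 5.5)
The paper gives no proof of this statement. It is recalled as a known fact, with references to Giusti, Li and Henrot--Pierre, and it is only ever applied when $\partial\Omega$ is of class $C^{2,\alpha}$. So there is no argument in the paper to compare with, and your proposal is a correct, self-contained proof along standard lines.

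\begin{itemize}
\item The Leibniz rule $\div_\top(f\mathbf{u})=f\div_\top\mathbf{u}+\langle\mathbf{u},\nabla_\top f\rangle$ follows directly from $\div_\top\mathbf{u}=\div\mathbf{u}-\langle\nabla\mathbf{u}\,\nu,\nu\rangle$.
\item The normal part produces exactly the curvature term through $\div_\top\nu=(N-1)\mathrm{M}_{\partial\Omega}$, which is the relation the paper recalls right after the statement.
\item On tangential fields, $\div_\top$ coincides with the intrinsic divergence, because the normal component of $D_{\tau_i}X$ is orthogonal to $\tau_i$. So that piece integrates to zero on the closed manifold $\partial\Omega$.
\end{itemize}
For $C^2$ boundaries, and hence for every application in the paper, this is already complete.

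For the $C^{1,1}$ refinement, two points should be made explicit.

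First, mollifying local graph functions does not produce a single closed hypersurface on which to apply Stokes, so you should localize first. Both sides are linear in $\mathbf{w}=f\mathbf{u}$, so by a partition of unity it suffices to treat $\mathbf{w}$ supported in a single chart $\{x_N=\omega(x')\}$. There you can:
\begin{itemize}
\item extend $f$ and $\mathbf{u}$ by $\tilde f(x',x_N)=f(x',\omega(x'))$ and $\tilde{\mathbf{u}}(x',x_N)=\mathbf{u}(x',\omega(x'))$, which are $C^1$;
\item apply the divergence theorem for compactly supported fields on the graph of $\omega_\varepsilon=\omega*\rho_\varepsilon$;
\item pass to the limit in the fixed parameter domain, using that $D^2\omega_\varepsilon\to D^2\omega$ at Lebesgue points and $|D^2\omega_\varepsilon|\le\|D^2\omega\|_{L^\infty}$.
\end{itemize}

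Second, consider the ``in particular'' part. On a merely $C^{1,1}$ boundary the field $\nabla_\top u=\nabla u-u_\nu\nu$ is only Lipschitz, because $\nu$ is, and $\Delta_\top u$ exists only $\mathcal{H}^{N-1}$-a.e. You are therefore applying the first formula to a Lipschitz field rather than a $C^1$ one. Your approximation argument, or the Rademacher-based version of Stokes, does cover this case. You should say so explicitly, since the first formula as stated assumes $\mathbf{u}\in C^1$.
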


Furthermore, we recall that the mean curvature can be introduced also by the relation 
\begin{equation*}
\div_{\top} \nu = (N-1) \mathrm{M}_{\partial\Omega}.    
\end{equation*} 
In particular, we have $\mathrm{M}_{\partial B_R} = \frac{1}{R}$, where $B_R$ denotes the ball of radius $R$ in $\R^N$ centered at the origin.

Finally, we recall the well-known differential formula, the Reilly's identity for the $p$-Laplacian for a smooth function on the closed smooth hypersurface $\partial \Omega$ (see \cite{hepi18}):
\begin{equation}\label{eq: reilly formula}
\begin{split}
    \Delta_p u & = |\nabla u|^{p-2} \Delta u + (p-2) |\nabla u|^{p-2} \frac{\langle(\nabla^2u)\nabla u,\nabla u\rangle}{|\nabla u|^{2}}
    \\
    & = |\nabla u|^{p-2} \left( \langle(\nabla^2u)\nu,\nu\rangle + \Delta_\top u  + (N-1) u_\nu \mathrm{M}_{\partial\Omega} + (p-2)\frac{\langle(\nabla^2u)\nabla u,\nabla u\rangle}{|\nabla u|^{2}} \right) \quad \text{on } \partial\Omega.
\end{split}
\end{equation}

Now we recall the following well-known algebraic result, an important tool for rigidity results (for the proof of this result we refer, for instance, to \cite[Prop. 2.4]{CoFe20}). We denote
by $\|\cdot\|$ the Frobenius matrix norm induced by the corresponding Frobenius scalar product $\langle \cdot , \cdot \rangle_{\mathrm{Fr}}$ of matrices of $\R^{N\times N}$.

\begin{prop}[Newton’s inequality] \label{prop newton}
    Let $N \in \N$ and $A \in \R^{N \times N}$, then 
    \begin{equation*}
        \|A\|^2 \geq \frac{(\mathrm{tr}(A))^2}{N},
    \end{equation*}
    where denotes $\mathrm{tr}(\cdot)$ the trace of a matrix. Furthermore, the equality holds if and only if $A = k I_N$ for some constant $k \in \R$.
\end{prop}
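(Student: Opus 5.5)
The statement to prove is Newton's inequality (Proposition \ref{prop newton}): $\|A\|^2 \ge (\mathrm{tr} A)^2/N$ for every $A \in \R^{N\times N}$, with equality if and only if $A = kI_N$. The plan is a single Cauchy--Schwarz argument in the Frobenius inner product, with no reduction to symmetric matrices needed.

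Since $\langle I_N, I_N\rangle_{\mathrm{Fr}} = N$, we have
\[
\mathrm{tr}(A) = \langle A, I_N\rangle_{\mathrm{Fr}}.
\]
By Cauchy--Schwarz,
\[
(\mathrm{tr} A)^2 = \langle A, I_N\rangle_{\mathrm{Fr}}^2 \le \|A\|^2\,\|I_N\|^2 = N\|A\|^2,
\]
which is exactly the inequality.

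For the equality case I would avoid quoting the equality case of Cauchy--Schwarz and instead use an explicit orthogonal decomposition. Split $A = \frac{\mathrm{tr} A}{N} I_N + A_0$, where $A_0 := A - \frac{\mathrm{tr} A}{N} I_N$. Then $\mathrm{tr}(A_0) = 0$, so $\langle A_0, I_N\rangle_{\mathrm{Fr}} = 0$. Pythagoras in the Frobenius inner product gives the identity
\[
\|A\|^2 = \frac{(\mathrm{tr} A)^2}{N} + \|A_0\|^2 .
\]
This recovers the inequality. It also shows that equality holds if and only if $A_0 = 0$, that is, $A = kI_N$ with $k = \mathrm{tr}(A)/N$. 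Conversely, $A = kI_N$ gives $\|A\|^2 = Nk^2 = (\mathrm{tr} A)^2/N$.

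There is no real obstacle. The only point needing care is that the equality characterization must hold for arbitrary, not necessarily symmetric, matrices. The decomposition handles this automatically, because it uses no structure beyond the inner product.
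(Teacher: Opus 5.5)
Your proof is correct and complete. The paper does not prove this proposition; it only refers to \cite[Prop.~2.4]{CoFe20}, so your argument supplies a self-contained justification rather than reproducing one.

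A common alternative is the entrywise route, $\|A\|^2=\sum_{i,j}a_{ij}^2\ge\sum_i a_{ii}^2\ge\frac{1}{N}\bigl(\sum_i a_{ii}\bigr)^2$. There, equality forces the off-diagonal entries to vanish and the diagonal entries to coincide. This is the same Cauchy--Schwarz argument, written in coordinates and in two steps.

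Your orthogonal splitting $A=\frac{\mathrm{tr}(A)}{N}I_N+A_0$ has an extra advantage: it gives the exact identity $\|A\|^2-\frac{(\mathrm{tr}(A))^2}{N}=\|A_0\|^2$. So the deficit in Newton's inequality is identified as the squared norm of the traceless part of $A$. This is the form one uses when turning such rigidity arguments into quantitative estimates (as in \cite{mapo19}).

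Your remark that no symmetry is needed is also to the point. In Lemma \ref{lemma positivity L(P)} the proposition is invoked for $|\nabla u|^{p-2}A_p(\nabla u)D^2u$, which is not symmetric in general.
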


\section{Properties and regularity}\label{sec: properties and regularity}

Fix $\Omega$ an open bounded connected subset of $\R^N$ with $C^{2,\alpha}$ boundary, for $\alpha \in (0,1)$. As already mentioned, for a real positive parameter $\beta > 0$ and $p\geq2$, we consider the following boundary value problem:
\begin{equation} \label{eq:problem_p}
    \begin{cases}
    \Delta_p u = N & \text{in } \Omega, \\
    |\nabla u|^{p-2} u_\nu + \beta |u|^{p-2}u = 0 & \text{on } \partial\Omega.
    \end{cases}
\end{equation}
As customary, $u\in W^{1,p}(\Omega)$ is a weak solution of \eqref{eq:problem_p} if and only if for any test function $\varphi \in  W^{1,p}(\Omega)$ it holds
\begin{equation*}
    \int_\Omega |\nabla u|^{p-2} \langle \nabla u, \nabla \varphi \rangle \,\d{x} +\beta \int_{\partial\Omega}|u|^{p-2}u\,\varphi\,\d{\mathcal H^{n-1}}
    =-N\int_\Omega \varphi \,\d{x}.
\end{equation*}
It is well-known that, for the $p$-Laplacian operator, Hopf's Lemma and Maximum Principle hold (see \cite[Chapter 5]{PucciSerrin07}). For sake of completeness, we start by recalling  proposition on known properties of the weak solutions of problem \ref{eq:problem_p}. 

\begin{prop}\label{prop properties u}
There exists a unique weak solution $u$ of \eqref{eq:problem_p}. Moreover,
\begin{equation*}
    u \in C^{1,\gamma}(\overline{\Omega}) \quad \text{and} \quad u \in C^{2,\alpha}(\Omega \setminus Z),
\end{equation*}
for some $\gamma \in (0,1)$, where $Z := \{x \in \Omega \colon |\nabla u| = 0\}$.
Furthermore, the set $Z$ has zero Lebesgue measure,
\[
u<0 \quad \text{in } \overline{\Omega} \quad \text{and} \quad
\nabla u \neq 0 \quad \text{on } \partial\Omega.
\]
\end{prop}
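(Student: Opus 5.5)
Existence and uniqueness will come from the direct method. The functional
\[
J(v)=\frac1p\int_\Omega|\nabla v|^p\,\d x+\frac{\beta}{p}\int_{\partial\Omega}|v|^p\,\d{}\mathcal H^{N-1}+N\int_\Omega v\,\d x
\]
on $W^{1,p}(\Omega)$ has the stated weak formulation as its Euler--Lagrange equation. It is strictly convex: the map $\xi\mapsto|\xi|^p$ is strictly convex, and if two minimizers had the same gradient they would differ by a constant $c$, which strict convexity of $t\mapsto|t|^p$ in the boundary term forces to be $0$ since $\beta>0$. It is coercive because of the Friedrichs-type inequality $\|v\|_{L^p(\Omega)}^p\le C\big(\|\nabla v\|_p^p+\|v\|_{L^p(\partial\Omega)}^p\big)$, which holds since $\Omega$ is connected. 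It is weakly lower semicontinuous, with the trace term handled by compactness of the trace $W^{1,p}\hookrightarrow L^p(\partial\Omega)$. Hence there is a unique minimizer, and it is the unique weak solution.

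Next comes the sign. I test with $\varphi=u^+$:
\[
\int_{\{u>0\}}|\nabla u|^p\,\d x+\beta\int_{\partial\Omega}(u^+)^p\,\d{}\mathcal H^{N-1}=-N\int_\Omega u^+\,\d x\le 0 ,
\]
so $u^+\equiv0$ and $u\le0$. Alternatively, $J(-|u|)\le J(u)$ gives the same conclusion. Then $u\not\equiv0$, since $0$ is not a solution ($\Delta_p0\neq N$). The function $u$ is a supersolution-type object: $-\Delta_p(-u)=N>0$ with $-u\ge0$. The strong maximum principle of Pucci--Serrin (Vázquez) gives $u<0$ in $\Omega$. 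If $u(x_0)=0$ at some $x_0\in\partial\Omega$, Hopf's lemma gives $u_\nu(x_0)>0$. Then $|\nabla u|^{p-2}u_\nu>0=-\beta|u|^{p-2}u$ at $x_0$, which contradicts the Robin condition; here I use that the boundary condition holds pointwise once $C^1$ regularity is known. So $u<0$ on $\overline\Omega$. Consequently $|\nabla u|^{p-2}u_\nu=\beta|u|^{p-1}>0$ on $\partial\Omega$, so $\nabla u\ne0$ there and $u_\nu>0$.

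For regularity, I first need $u\in L^\infty$, which follows by a Moser or De Giorgi iteration adapted to Robin data, with the boundary term having a good sign. Then Lieberman's boundary regularity for conormal/nonlinear oblique problems gives $u\in C^{1,\gamma}(\overline\Omega)$; the Robin datum $\beta|u|^{p-2}u$ is Hölder, and $\partial\Omega\in C^{2,\alpha}$. Away from $Z$ the equation is uniformly elliptic, because $|\nabla u|$ is locally bounded below there. Writing it in nondivergence form with $C^{0,\gamma}$ coefficients, or differentiating the equation as in the standard argument, Schauder theory bootstraps $u$ to $C^{2,\alpha}_{\rm loc}(\Omega\setminus Z)$.

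The main obstacle is $|Z|=0$. I would use the known result (Lou, or Antonini--Ciraolo--Farina) that for $\Delta_pu=f$ with $f$ of constant sign one has $|\nabla u|^{p-2}\nabla u\in W^{1,2}_{\rm loc}$. Then, on the set $Z$, Stampacchia's theorem gives $\nabla\big(|\nabla u|^{p-2}\nabla u\big)=0$ a.e., so $\Delta_pu=\operatorname{div}\big(|\nabla u|^{p-2}\nabla u\big)=0$ a.e. on $Z$. This contradicts $\Delta_pu=N\ne0$ unless $|Z|=0$.
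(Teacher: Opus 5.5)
Your proposal is correct and follows essentially the same route as the paper: you test with $u^+$ to get $u\le 0$, combine the strong maximum principle and Hopf's lemma to contradict the Robin condition at a boundary zero, use Lieberman's conormal regularity for $C^{1,\gamma}(\overline{\Omega})$ and Schauder theory away from $Z$, and invoke Lou's result for $|Z|=0$. You also make explicit several points the paper leaves implicit: existence and uniqueness via the direct method, the $L^\infty$ bound needed before applying Lieberman, interior negativity, and the Stampacchia argument behind $|Z|=0$ (there, $|\nabla u|^{p-2}\nabla u\in W^{1,2}_{\rm loc}$ requires integrability of $f$ rather than a sign condition, which is harmless here since $f\equiv N$).
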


\begin{proof}
The global regularity of the unique weak solution $u$ of problem \eqref{eq:problem_p} follows by \cite[Theorem 2]{Lieberman88}. For the measure of the set $Z$ we refer to \cite[Corollary 1.1]{Lou2008}. Standard elliptic regularity theory yields the smoothness of $u$ on $\Omega \setminus Z$, see \cite[Lemma 3.1]{fgk}.

Testing the weak formulation of \eqref{eq:problem_p} with $\varphi= u_+ := \max\{u,0\}$ yields
\[
\int_\Omega |\nabla u|^{p-2} \langle\nabla u,  \nabla u_+ \rangle\,\d{x}
+\beta\int_{\partial\Omega}|u|^{p-2}u\,u_+\,\d{\mathcal H^{n-1}}
=-N\int_\Omega u_+\,\d{x}.
\]
Since $\nabla u_+=\nabla u$ a.e.\ on $\{u>0\}$ and $\nabla u_+=0$ a.e.\ on $\{u\le 0\}$, we obtain
\[
\int_\Omega |\nabla u_+|^p\,\d{x}
+\beta\int_{\partial\Omega}u_+^p\,\d{\mathcal H^{n-1}}
=-N\int_\Omega u_+\,\d{x}.
\]
The left-hand side is nonnegative, while the right-hand side is nonpositive. Hence both sides must vanish, and therefore $u_+\equiv 0$ in $\overline{\Omega}$.
Thus
\[
u\le 0 \qquad \text{in }\overline{\Omega}.
\]

We now show that $u<0$ on $\partial\Omega$. Assume by contradiction that there exists $x_0\in\partial\Omega$ such that
\[
u(x_0)=0.
\]
Since $u\le 0$ in $\overline{\Omega}$, the point $x_0$ is a global maximum point of $u$. Moreover, $u$ is not constant, because otherwise $\Delta_p u\equiv 0$ in $\Omega$, 
contradicting \eqref{eq:problem_p}. Hence, by the strong maximum principle and the Hopf boundary lemma (cf. \cite[Chapter 5]{PucciSerrin07}), we get
\[
u_\nu(x_0) > 0.
\]
In particular, $\nabla u(x_0)\neq 0$, so that
\[
|\nabla u(x_0)|^{p-2} u_\nu(x_0)>0.
\]
On the other hand, evaluating the Robin condition at $x_0$ and using $u(x_0)=0$, we get $|\nabla u(x_0)|^{p-2} u_\nu(x_0)=0$, that is a contradiction. Therefore,
\[
u<0 \quad \text{on }\partial\Omega.
\]
Finally, the above sign relation along with $\beta >0$ and the Robin boundary condition yield $\nabla u \neq 0$ on $\partial\Omega$.
\end{proof}

By Proposition \ref{prop properties u}, it follows that $\nabla u \neq 0$ in a suitable neighborhood of $\partial\Omega$. In particular, in this neighborhood, the $p$-Laplacian is uniformly elliptic. Therefore, one expects standard elliptic regularity theory to yield enhanced regularity of $u$ in a neighborhood of $\partial\Omega$ up to the boundary. We formalize that in the following result (the proof is postponed to Appendix \ref{sec: appendix reg}). 

\begin{teo}\label{thm regularity C2}
Let $u\in C^{1,\gamma}(\overline{\Omega})$ be the solution of \eqref{eq:problem_p}. Then, there exists $\varepsilon>0$ such that
\[
    u\in C^{2,\min\{\alpha,\gamma \}}(\overline{\Omega_\varepsilon}), 
\]
where $\Omega_\varepsilon=\{x\in\Omega: \mathrm{dist}(x,\partial\Omega)<\varepsilon\}$.
\end{teo}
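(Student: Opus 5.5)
The plan is to localize near $\partial\Omega$, flatten the boundary, and apply Schauder theory to the linearized problem, using that the equation is uniformly elliptic wherever $\nabla u\neq 0$. By Proposition \ref{prop properties u}, $u\in C^{1,\gamma}(\overline{\Omega})$ and $\nabla u\neq 0$ on $\partial\Omega$. Compactness of $\partial\Omega$ and continuity of $\nabla u$ then give $\varepsilon>0$ and $m>0$ with $|\nabla u|\geq m$ on $\overline{\Omega_{2\varepsilon}}$. In this region the vector field $A(\xi)=|\xi|^{p-2}\xi$ is smooth in $\xi$ for $|\xi|\ge m$ and satisfies the ellipticity bounds
\[
\min\{1,p-1\}|\xi|^{p-2}|\eta|^2\le \langle \nabla_\xi A(\xi)\eta,\eta\rangle\le \max\{1,p-1\}|\xi|^{p-2}|\eta|^2 .
\]
Hence the equation $\div A(\nabla u)=N$ is a quasilinear equation that is uniformly elliptic on $\Omega_{2\varepsilon}$. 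The same holds for $\operatorname{tr}\big(\nabla_\xi A(\nabla u)\nabla^2 u\big)=N$ once second derivatives are known to exist.

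First I will obtain $W^{2,2}$ regularity up to the boundary via the difference-quotient method. Near each boundary point I straighten $\partial\Omega$ with a $C^{2,\alpha}$ diffeomorphism. I then take tangential difference quotients in the weak formulation, localized by a cutoff supported in $\Omega_{2\varepsilon}$ and tested with $\Delta_{-h}(\eta^2\Delta_h u)$. The boundary term $\beta\int |u|^{p-2}u\,\varphi$ is harmless, since $u<0$ on $\partial\Omega$ makes $s\mapsto|s|^{p-2}s$ smooth near the range of $u$ on the boundary, and $u\in C^{1,\gamma}$. The normal second derivative is then recovered from the equation itself, using uniform ellipticity.

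Next I treat the conormal problem as a linear one. Differentiating is avoided by freezing coefficients: $u$ solves the linear divergence-form equation
\[
\div\big(a(x)\nabla u\big)=N,\qquad a(x)=|\nabla u(x)|^{p-2}\in C^{0,\gamma}(\overline{\Omega_{2\varepsilon}}),\quad a\ge m^{p-2},
\]
with the conormal boundary condition $a\,u_\nu=-\beta|u|^{p-2}u=:g\in C^{1,\gamma}(\partial\Omega)$. Schauder theory for the oblique/conormal problem with Hölder coefficients (e.g.\ \cite{gitr83}, or Lieberman's estimates) first gives $C^{1,\gamma}$ again, which is no improvement. To gain, I will differentiate tangentially: $w=\partial_\tau u$ (in flattened coordinates) solves the linearized equation $\partial_i(a_{ij}(x)\partial_j w)=\partial_i f_i$, where $a_{ij}=\partial_{\xi_j}A_i(\nabla u)\in C^{0,\gamma}$ and the $f_i$ collect terms from the metric, which are $C^{0,\alpha}$ by the $C^{2,\alpha}$ regularity of $\partial\Omega$. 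The conormal datum is obtained by differentiating the Robin condition, $a_{ij}\partial_j w\,\nu_i=\partial_\tau g+\dots\in C^{0,\min\{\alpha,\gamma\}}$. Divergence-form Schauder estimates with conormal data then give $w\in C^{1,\min\{\alpha,\gamma\}}$. Combined with the equation, this yields $\partial_{\nu\nu}u\in C^{0,\min\{\alpha,\gamma\}}$, hence $u\in C^{2,\min\{\alpha,\gamma\}}(\overline{\Omega_\varepsilon})$ after shrinking $\varepsilon$. The $W^{2,2}$ step is what justifies that $w$ is a weak solution of this differentiated problem. Patching local charts by a covering argument finishes the proof.

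The main obstacle is the bookkeeping in the differentiated conormal problem. One has to check that every coefficient and datum is at least $C^{0,\min\{\alpha,\gamma\}}$; this is where the $C^{2,\alpha}$ boundary (so that $\nu\in C^{1,\alpha}$) and $u<0$ on $\partial\Omega$ are used. One must also justify the weak formulation for $w$, which is why the $W^{2,2}$ step comes first.
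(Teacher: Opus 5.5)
Your argument is correct, but it takes a different route from the paper. The paper never differentiates the equation. It substitutes $v=\log(-u)$ (possible because $u<0$ on $\overline{\Omega}$), which turns the Robin condition into the purely gradient-dependent condition $|\nabla v|^{p-2}\langle \nabla v,\bar\nu\rangle=\beta$. It then writes the equation in nondivergence form $\mathrm{tr}(a\nabla^2 v)+a^0=0$, with coefficients frozen along the known gradient, $a=|\nabla v|^{p-2}A_p(\nabla v)\in C^{0,\min\{\alpha,\gamma\}}$. Next it makes the boundary operator uniformly oblique on a ball $B_K$ via the cutoff $\ell$. Finally it applies Lieberman's Schauder estimate for nonlinear oblique problems (Theorem \ref{lieberman-teor}) in each boundary chart. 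So the freezing idea you discarded does work, but only in nondivergence form: there H\"older coefficients yield $C^{2,\theta}$, whereas in divergence form they only return $C^{1,\theta}$, as you observed. The paper's route is short, but it relies on a nonlinear oblique-derivative theorem and on interior $C^2$ regularity of $v$ (available since $\nabla u\neq 0$ near $\partial\Omega$). Yours uses only linear tools: tangential difference quotients, the linear conormal Schauder estimate for $w=\partial_\tau u$, and recovery of $\partial_{NN}u$ from the equation. It also needs no change of variables, since the Robin term is just a known $C^{1,\gamma}$ datum once $u<0$ is used. The cost is the flattening bookkeeping and the justification of the differentiated weak formulation; for the latter, $\partial_\tau\nabla u\in L^2$ already suffices, so full $W^{2,2}$ is not needed. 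Both proofs lose regularity at the same places, namely $\nabla u\in C^{0,\gamma}$ in the coefficients and $\nu\in C^{1,\alpha}$ in the geometry, hence the common exponent $\min\{\alpha,\gamma\}$. In particular, your lower-order terms $f_i$ depend on $\nabla u$ and are therefore $C^{0,\min\{\alpha,\gamma\}}$ rather than $C^{0,\alpha}$, which does not affect the conclusion.
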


\begin{rem}[Value of the constants $R$ and $C_0$]
    We first notice that by Divergence Theorem and by \eqref{eq:problem_p} we get
    \begin{equation*}
        N|\Omega| = \int_\Omega \Delta_p u \d{x} = \int_{\partial\Omega} |\nabla u|^{p-2} u_\nu \d{}\mathcal{H}^{N-1} = |\partial\Omega|R,
    \end{equation*}
    if one sets $R=\frac{N|\Omega|}{|\partial\Omega|}$ (cf. \eqref{intro eq R}).

    It is well known that the problem \eqref{eq:problem_p} admits a unique radial solution when $\Omega = B_R(z)$, with $z \in \R^N$. We denote this radial solution by $q^z$ and it has the explicit form:
    \begin{equation*}
        q^z(x) = \frac{p-1}{p} \left(|x-z|^{\frac{p}{p-1}}-R^\frac{p}{p-1}\right) -\left(\frac{R}{\beta}\right)^\frac{1}{p-1} \quad \text{for } x \in \overline{B_R(z)}.
    \end{equation*}
    In particular, on the boundary $\partial B_R(z)$ where $|x-z| = R$, the outer normal is given by $\nu(x) = \frac{x-z}{R}$, and we have
    \begin{equation*}
        \nabla q = R^{\frac{1}{p-1}} \frac{x-z}{R}, \quad |\nabla q| = R^{\frac{1}{p-1}}, \quad q=-\left( \frac{R}{\beta} \right)^{\frac{1}{p-1}},  \quad \text{and} \quad q_\nu = R^{\frac{1}{p-1}} \quad \text{on } \partial B_R(z).
    \end{equation*}
    Moreover, the mean curvature is $\mathrm{M}_{\partial B_R(z)} = \frac{1}{R}$. Hence we can directly compute the constant value on the overdetermined condition \eqref{intro eq overdetermined cond p} for the radial solution $q$ on $B_R(z)$, obtaining:
    \begin{equation*}
        \begin{aligned}
        C_0 
        &= |\nabla q|^p + pNq - p|\nabla q|^{p-2} (q_\nu)^2 + (N-1)\beta|q|^p \mathrm{M}_{\partial B_R(z)} \\
        &= \left( R^{\frac{1}{p-1}} \right)^p - pN \left( \frac{R}{\beta} \right)^{\frac{1}{p-1}} - p \left( R^{\frac{1}{p-1}} \right)^{p-2} \left( R^{\frac{1}{p-1}} \right)^2 + (N-1)\beta \left( \frac{R}{\beta} \right)^{\frac{p}{p-1}} \frac{1}{R} \\
        &= R^{\frac{p}{p-1}} - pN \left( \frac{R}{\beta} \right)^{\frac{1}{p-1}} - p R^{\frac{p}{p-1}} + (N-1) \left( \frac{R}{\beta} \right)^{\frac{1}{p-1}}  \\
        &= (1-p)R^{\frac{p}{p-1}} - \left( \frac{R}{\beta} \right)^{\frac{1}{p-1}} \big[ pN - (N-1) \big].
        \end{aligned}
    \end{equation*}
    which yields \eqref{intro eq C0}. In particular, we notice that ``a fortiori''
    \begin{equation*}
        N|B_R| = \int_{\partial B_R} |\nabla q|^{p-2} q_\nu \d{}\mathcal{H}^{N-1} = |\partial B_R|R^\frac{p-2}{p-1}R^\frac{1}{p-1}=|\partial B_R|R.
    \end{equation*}
\end{rem}

\section{Integral identities}\label{sec: integral identities}
In this section, we fix once of all 
\[
\text{an open bounded connected subset $\Omega$ of } \R^N \text{ with } C^{2,\alpha} \text{ boundary, for } \alpha \in (0,1).
\]
Let $u \in C^{1,\gamma}(\overline{\Omega})$ be the solution of \eqref{intro eq:problem_p} provided by Proposition \ref{prop properties u}. We define the $P$-function as
\begin{equation}\label{eq:P_function}
    P(u) = \frac{2(p-1)}{p}|\nabla u|^p - 2u \quad \text{on } \overline{\Omega}.
\end{equation}
We consider the linearized operator associated with the $p$-Laplacian, denoted by $\mathcal{L}_u$, acting on a smooth test function $\phi$ by
\begin{equation}\label{eq: L(phi)_function}
    \mathcal{L}_u(\phi) = \div\big( |\nabla u|^{p-2} A_p(\nabla u) \nabla \phi \big) \quad \text{in } \Omega,
\end{equation}
where the matrix $A_p(\xi)$ is given by
\begin{equation}\label{eq:matrix_A}
    A_p(\xi) = I_N + (p-2)\frac{\xi \otimes \xi}{|\xi|^2} \quad \text{for all } \xi \in \R^N \setminus \{0\}.
\end{equation}
In particular, it is well-known that, applying $\mathcal{L}_u$ to $u$, we retrieve the $p$-Laplacian structure, namely
\begin{equation}\label{eq: L(u)_u}
\begin{aligned}
    \mathcal{L}_u(u) &= \div\big( |\nabla u|^{p-2} A_p(\nabla u) \nabla u \big) = \div\left( |\nabla u|^{p-2} \left( I_N + (p-2)\frac{\nabla u \otimes \nabla u}{|\nabla u|^2} \right) \nabla u \right) \\
    &= \div\big( |\nabla u|^{p-2} ( \nabla u + (p-2)\nabla u ) \big) = (p-1)\Delta_p u.
\end{aligned}
\end{equation}
Moreover, for $p=2$, then $A_2 = I_N$ and, consequently, $\mathcal{L}_u(P) = \Delta P$, where in this case $P$ is given by \eqref{intro eq P}.                  

We start by the following generalization of \cite[Lemma 1]{gamo26} providing a formula for the derivative of the Robin boundary condition. Notice that for $p=2$ we recast exactly \cite[Eq.\,(3.1)]{gamo26}

\begin{lemma}[Tangential derivative of the boundary condition] \label{lem:tangential}
    Let $u \in C^{1,\gamma}(\overline{\Omega})$ be the solution to \eqref{eq:problem_p}. Then, we have
    \begin{equation*}
        \begin{aligned}
        |\nabla u|^{p-2}\langle(\nabla^2u)\nabla u,\nu\rangle = & \, |\nabla u|^{p-2} u_\nu \langle(\nabla^2u)\nu,\nu\rangle
        \\&-(p-2)|\nabla u|^{p-4} u_\nu \langle(\nabla^2u)\nabla u,\nabla_\top u\rangle
        \\&-|\nabla u|^{p-2} \mathrm{I\!I}(\nabla_\top u,\nabla_\top u)
        -(p-1)\beta|u|^{p-2} |\nabla_\top u|^{2} \qquad \text{on }\partial\Omega.
        \end{aligned}
    \end{equation*}
    where $\mathrm{I\!I}$ denotes the second fundamental form of $\partial\Omega$.
\end{lemma}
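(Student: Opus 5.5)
The plan is to differentiate the Robin boundary condition along $\partial\Omega$ in the direction of the tangential vector field $\nabla_\top u$. By Theorem \ref{thm regularity C2}, $u \in C^{2}$ up to the boundary in $\Omega_\varepsilon$, and by Proposition \ref{prop properties u} we have $\nabla u\neq 0$ and $u<0$ on $\partial\Omega$. Hence $|\nabla u|^{p-2}$ and $|u|^{p-2}u = -|u|^{p-1}$ are $C^1$ near $\partial\Omega$, and the function
\[
g := |\nabla u|^{p-2} u_\nu + \beta |u|^{p-2} u,
\]
defined on $\partial\Omega$ with $\nu$ extended as a $C^{1}$ field (for instance the gradient of the signed distance), vanishes identically on $\partial\Omega$. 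Therefore $\langle \nabla_\top g, \nabla_\top u\rangle = 0$ on $\partial\Omega$, and the lemma is obtained by expanding this identity.

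The expansion has three pieces.

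\emph{First}, $\nabla_\top(|\nabla u|^{p-2}) = (p-2)|\nabla u|^{p-4}\big((\nabla^2 u)\nabla u\big)_\top$. Paired with $\nabla_\top u$ and multiplied by $u_\nu$, this gives $(p-2)|\nabla u|^{p-4}u_\nu\langle(\nabla^2u)\nabla u,\nabla_\top u\rangle$.

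\emph{Second}, $\langle\nabla_\top u_\nu, \nabla_\top u\rangle = \langle(\nabla^2u)\nu,\nabla_\top u\rangle + \langle (D\nu)\nabla u,\nabla_\top u\rangle$. Since $D\nu$ restricted to tangent vectors is the shape operator, this equals $\langle(\nabla^2u)\nu,\nabla_\top u\rangle + \mathrm{I\!I}(\nabla_\top u,\nabla_\top u)$, with the sign convention that convex sets have positive curvature.

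\emph{Third}, $\nabla_\top(|u|^{p-2}u) = (p-1)|u|^{p-2}\nabla_\top u$. This contributes $(p-1)\beta|u|^{p-2}|\nabla_\top u|^2$.

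Adding the three pieces gives
\[
(p-2)|\nabla u|^{p-4}u_\nu\langle(\nabla^2u)\nabla u,\nabla_\top u\rangle + |\nabla u|^{p-2}\langle(\nabla^2u)\nu,\nabla_\top u\rangle + |\nabla u|^{p-2}\mathrm{I\!I}(\nabla_\top u,\nabla_\top u) + (p-1)\beta|u|^{p-2}|\nabla_\top u|^2 = 0.
\]
Finally, substitute $\nabla_\top u = \nabla u - u_\nu\nu$ in the second term and use the symmetry of $\nabla^2 u$:
\[
\langle(\nabla^2u)\nu,\nabla_\top u\rangle = \langle(\nabla^2u)\nabla u,\nu\rangle - u_\nu\langle(\nabla^2u)\nu,\nu\rangle.
\]
Solving for $|\nabla u|^{p-2}\langle(\nabla^2u)\nabla u,\nu\rangle$ gives exactly the claimed formula. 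As a check, for $p=2$ the identity reduces to \cite[Eq.\,(3.1)]{gamo26}.

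The main obstacle is bookkeeping rather than analysis. The step needing most care is the second piece: correctly identifying the tangential derivative of $\nu$ with $\mathrm{I\!I}$, including its sign under the paper's orientation convention, and checking that the result does not depend on how $\nu$ is extended off $\partial\Omega$. The extension is harmless because only tangential derivatives enter. The regularity needed to differentiate $|\nabla u|^{p-2}u_\nu$ tangentially comes from Theorem \ref{thm regularity C2} together with $\nabla u\ne0$ on $\partial\Omega$.
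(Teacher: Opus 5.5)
Your proposal is correct and follows essentially the paper's argument: the paper also extends $\nu$ through the distance function, uses that the gradient of $|\nabla u|^{p-2}u_\nu+\beta|u|^{p-2}u$ is normal on $\partial\Omega$, and expands the same three contributions. The only difference is cosmetic. The paper pairs this gradient with each principal direction $\theta_i$ and then sums against $\langle\nabla_\top u,\theta_i\rangle$, whereas you contract directly with $\nabla_\top u$; by linearity this is the same computation.
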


\begin{proof}
    First of all, since $\Omega$ is of class $C^{2,\alpha}$, we know we can always extend the vector field $\nu$ smoothly to a tubular neighborhood $\mathcal{U}$ of $\partial\Omega$ (see, e.g., \cite{krpa81}). For instance, we can define $\nu = -\eta \nabla d_{\partial\Omega}$, where the function $d_{\partial\Omega}(x) := \mathrm{dist}(x,\partial\Omega)$ for every $x \in \overline{\Omega}$, and $\eta$ is a smooth cut-off function with support contained in the set in $\mathcal{U}$ and such that $\eta \equiv 1$ near $\partial\Omega$. Moreover, $d_{\partial\Omega} \in C^2(\mathcal{U})$ and $|\nabla d_{\partial\Omega}|^2 = 1$ in $\mathcal{U}$. Furthermore, by \cite[Appendix 14.6]{gitr83}, at any point in $\partial\Omega$, we can choose coordinates so that
    \begin{equation*}
        \nabla \nu = - \nabla^2 d_{\partial\Omega} = 
        \left[\begin{matrix}
        \kappa_1  &\cdots &0 &0 \\
        \vdots &\ddots &\vdots  &\vdots \\
        0  &\cdots &\kappa_{N-1} &0 \\
        0  &\cdots &0 &0
        \end{matrix}
        \right],
    \end{equation*}
    which is the matrix canonically associated to the second fundamental form $\mathrm{I\!I}$ of $\partial\Omega$.

    We recall that, by Theorem \ref{thm regularity C2}, $u \in C^{2,\min\{\alpha,\gamma\}}(\overline{\Omega_\varepsilon})$, for $\varepsilon \in (0,1)$, where $\Omega_\varepsilon=\{x\in\Omega: \mathrm{dist}(x,\partial\Omega)<\varepsilon\}$. Moreover, $\nabla u \neq 0$ in $\overline{\Omega_\varepsilon}$. Without loss of generality, assume that $\varepsilon > 0 $ is sufficiently small in order to have $\Omega_\varepsilon \subset \mathcal{U}$.
    We now proceed assuming to extend the normal $\nu$ as described above, whenever it is required in the computation below.

    Let us define the auxiliary function $v$ representing the Robin boundary condition
    \begin{equation*}
    v := |\nabla u|^{p-2} \langle \nabla u, \nu \rangle + \beta |u|^{p-2}u \quad \text{on } \Omega_\varepsilon.
    \end{equation*}
    Since $v = 0$ on $\partial\Omega$, its gradient $\nabla v$ must be proportional to the normal vector $\nu$ on $\partial\Omega$. We now compute the full gradient of $v$ in the tubular neighborhood $\Omega_\varepsilon$:
    \begin{equation}\label{eq: nabla v}
        \begin{aligned}
        \nabla v &= \nabla \big( |\nabla u|^{p-2} \langle \nabla u, \nu \rangle + \beta |u|^{p-2}u \big)  \\
        &= (p-2)|\nabla u|^{p-4} (\nabla^2 u)\nabla u \langle \nabla u, \nu \rangle + |\nabla u|^{p-2} \nabla ( \langle \nabla u, \nu \rangle ) \\
        &\quad + \beta (p-2)|u|^{p-4}u^2 \nabla u + \beta|u|^{p-2}\nabla u \\
        &= (p-2)|\nabla u|^{p-4} (\nabla^2 u)\nabla u \langle \nabla u, \nu \rangle + |\nabla u|^{p-2} \nabla ( \langle \nabla u, \nu \rangle ) 
        + (p-1) \beta |u|^{p-2} \nabla u \quad \text{in } \Omega_\varepsilon,
        \end{aligned}
    \end{equation}
    where we have used the trivial derivation
    \begin{equation*}
        \nabla (|u|^{p-2}) = (p-2) |u|^{p-3} \frac{u}{|u|} \nabla u, \quad \text{and} \quad \nabla (|\nabla u|^{p-2}) = (p-2) |\nabla u|^{p-3} \frac{\nabla u}{|\nabla u|} (\nabla^2 u).
    \end{equation*}
    By product rule, $\nabla (\langle \nabla u, \nu \rangle) = (\nabla^2 u) \nu + (\nabla \nu) \nabla u$. Furthermore, being the extension of $\nu$ unitary, we have $(\nabla \nu)\nu = 0$ in $\mathcal{U}$, which implies $(\nabla \nu)\nabla u = (\nabla \nu)\nabla_\top u$.
    Moreover, let $\{\theta_i\}_{i=1}^{N-1}$ be the tangential frame given by principal direction of $\partial\Omega$, i.e. $\{ \theta_1, \dots, \theta_{N-1} \}$ is an orthogonal basis at every given point of the tangent space of $\partial\Omega$ (eigenvectors of the second fundamental form $\mathrm{I\!I}$ on $\partial\Omega$) such that 
    \[
    \langle (\nabla \nu) \theta_i, \theta_i \rangle = \mathrm{I\!I}(\theta_i,\theta_i) = \kappa_i.
    \]
    Then, for every $i=1,\dots,N-1$ we have that $\langle \nabla v, \theta_i \rangle = 0$ on $\partial\Omega$. Hence, by \eqref{eq: nabla v}, we have 
    \begin{equation*}
        \begin{aligned}
        0= & \,(p-2)|\nabla u|^{p-4}\langle\nabla u,\nu\rangle\langle(\nabla^2u)\nabla u,\theta_i\rangle +|\nabla u|^{p-2}\langle(\nabla^2u)\nu,\theta_i\rangle
        \\&+|\nabla u|^{p-2}\langle(\nabla\nu)\nabla_\top u,\theta_i\rangle +(p-1)\beta |u|^{p-2}\langle\nabla_\top u,\theta_i\rangle,
        \end{aligned}
    \end{equation*}
    which yields
    \begin{equation}\label{eq: nabla u(p-2)langle...rangle}
        \begin{aligned}
        |\nabla u|^{p-2}\langle(\nabla^2u)\nu,\theta_i\rangle= & -(p-2)|\nabla u|^{p-4}\langle\nabla u,\nu\rangle\langle(\nabla^2u)\nabla u,\theta_i\rangle\\
        &-|\nabla u|^{p-2}\langle(\nabla\nu)\nabla_\top u,\theta_i\rangle -(p-1)\beta |u|^{p-2}\langle\nabla_\top u,\theta_i\rangle \quad \text{on } \partial\Omega.
        \end{aligned}
    \end{equation}
    Therefore, by the standard decomposition $\displaystyle \nabla_\top u = \sum_{i=1}^{N-1}\langle\nabla_\top u,\theta_i\rangle\theta_i$ along with $\nabla u = \nabla_\top u + u_\nu \nu$, and by using \eqref{eq: nabla u(p-2)langle...rangle}, we obtain that
    \begin{equation*}
        \begin{aligned}
        |\nabla u|^{p-2}\langle(\nabla^2u)\nabla u,\nu\rangle&=|\nabla u|^{p-2}\langle\nabla u,\nu\rangle\langle(\nabla^2u) \nu,\nu\rangle +|\nabla u|^{p-2}\sum_{i=1}^{N-1}\langle\nabla_\top u,\theta_i\rangle\langle(\nabla^2u)\theta_i,\nu\rangle\\
        &=|\nabla u|^{p-2}\langle\nabla u,\nu\rangle\langle(\nabla^2u)\nu,\nu\rangle
        -\sum_{i=1}^{N-1}\langle\nabla_\top u,\theta_i\rangle(p-2)|\nabla u|^{p-4}\langle\nabla u,\nu\rangle\langle(\nabla^2u)\nabla u,\theta_i\rangle\\
        & \quad -\sum_{i=1}^{N-1}\langle\nabla_\top u,\theta_i\rangle|\nabla u|^{p-2}\langle(\nabla\nu)\nabla_\top u,\theta_i\rangle
        -\sum_{i=1}^{N-1}\langle\nabla_\top u,\theta_i\rangle(p-1)\beta|u|^{p-2}\langle\nabla_\top u,\theta_i\rangle,
        \end{aligned}
    \end{equation*}
    and the statement follows.
\end{proof}

We are now ready to deduce the Fundamental Identity which will be the starting point of our analysis. It generalizes previous identities obtained for the case $p=2$ with Dirichlet boundary conditions (see \cite[Thm. 2.1]{mapo19} and \cite[Thm. 2.4]{mp2020}) and with Robin boundary conditions (cf. \cite[Thm. 3]{gamo26}).  

\begin{teo}[Fundamental Identity] \label{teo:fundamental}
Let $u \in C^{1,\gamma}(\overline{\Omega})$ be the solution to \eqref{eq:problem_p}. Let $P$ and $\mathcal{L}_u$ be defined by \eqref{eq:P_function} and \eqref{eq: L(phi)_function}, respectively. Then, the following identity holds:
\begin{equation}\label{eq: fundamental identity}
    \begin{aligned}
    \frac{1}{2(p-1)}\int_\Omega \mathcal{L}_u(P) \d{x}+\int_{\partial\Omega} |\nabla u|^{2p-4} \mathrm{I\!I}(\nabla_\top u,\nabla_\top u) \d{}\mathcal{H}^{N-1}+ 2(p-1) \beta\int_{\partial\Omega} |\nabla u|^{p-2}|u|^{p-2}|\nabla_\top u|^2 \d{}\mathcal{H}^{N-1} \\= (N-1) \int_{\partial\Omega} |\nabla u|^{p-2} u_\nu \d{}\mathcal{H}^{N-1}
    - (N-1) \int_{\partial\Omega} |\nabla u|^{2p-4} (u_\nu)^2 \mathrm{M}_{\partial\Omega} \d{}\mathcal{H}^{N-1}.
    \end{aligned}
\end{equation}
\end{teo}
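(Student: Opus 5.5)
Since $\mathcal{L}_u(P)$ is a divergence, I integrate it over $\Omega$ and apply the Divergence Theorem, which yields a single boundary integral:
\[
\int_\Omega \mathcal{L}_u(P)\,\d x=\int_{\partial\Omega}|\nabla u|^{p-2}\langle A_p(\nabla u)\nabla P,\nu\rangle\,\d{}\mathcal{H}^{N-1}.
\]
This step needs justification, because $P$ is only $C^{1}$ near the critical set $Z$. I handle it in two pieces. Near $\partial\Omega$, Theorem \ref{thm regularity C2} gives $C^2$ regularity, so there is no issue. In the interior, the vector field $|\nabla u|^{p-2}A_p(\nabla u)\nabla P$ is built from $|\nabla u|^{2p-4}(\nabla^2u)\nabla u$ and $|\nabla u|^{p-2}\nabla u$ terms. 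It therefore has a weak divergence in $L^1$, using $|Z|=0$ and the known $W^{1,2}$ regularity of $|\nabla u|^{p-2}\nabla u$. Alternatively, I would integrate over $\Omega\setminus\{|\nabla u|<\delta\}$-type regions, or over $\Omega\setminus\overline{\Omega\setminus\Omega_\varepsilon}$ plus an approximation, and pass to the limit. I will treat this as technical and not dwell on it.

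Next I compute the flux on $\partial\Omega$. From \eqref{eq:P_function},
\[
\nabla P=2(p-1)|\nabla u|^{p-2}(\nabla^2u)\nabla u-2\nabla u .
\]
Using $A_p(\nabla u)\nu=\nu+(p-2)u_\nu\nabla u/|\nabla u|^2$, the integrand divided by $2(p-1)$ becomes
\[
|\nabla u|^{2p-4}\Big(\langle(\nabla^2u)\nabla u,\nu\rangle+(p-2)u_\nu\tfrac{\langle(\nabla^2u)\nabla u,\nabla u\rangle}{|\nabla u|^2}\Big)-|\nabla u|^{p-2}u_\nu .
\]
I then substitute Lemma \ref{lem:tangential} for $|\nabla u|^{p-2}\langle(\nabla^2u)\nabla u,\nu\rangle$. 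This produces three groups of terms:
\begin{itemize}
\item the $\mathrm{I\!I}$-term;
\item a term $-(p-1)\beta|\nabla u|^{p-2}|u|^{p-2}|\nabla_\top u|^2$;
\item a combination of $u_\nu\langle(\nabla^2u)\nu,\nu\rangle$, $-(p-2)u_\nu\langle(\nabla^2u)\nabla u,\nabla_\top u\rangle/|\nabla u|^2$, and $(p-2)u_\nu\langle(\nabla^2u)\nabla u,\nabla u\rangle/|\nabla u|^2$.
\end{itemize}
Since $\nabla u-\nabla_\top u=u_\nu\nu$, the last two Hessian terms combine into $(p-2)u_\nu^2\langle(\nabla^2u)\nabla u,\nu\rangle/|\nabla u|^2$. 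Reilly's identity \eqref{eq: reilly formula} expresses $|\nabla u|^{p-2}\big(\langle(\nabla^2u)\nu,\nu\rangle+(p-2)\langle(\nabla^2u)\nabla u,\nabla u\rangle/|\nabla u|^2\big)$ as $N-|\nabla u|^{p-2}(\Delta_\top u+(N-1)u_\nu\mathrm{M}_{\partial\Omega})$. The plan is therefore to group the Hessian terms so that exactly this Reilly combination appears, multiplied by $|\nabla u|^{p-2}u_\nu$, with any leftover expressed again through Lemma \ref{lem:tangential}. I expect this regrouping to be the main obstacle. One might need to apply Lemma \ref{lem:tangential} only partially, or to write $\langle(\nabla^2u)\nabla u,\nu\rangle=u_\nu\langle(\nabla^2 u)\nu,\nu\rangle+\langle(\nabla^2u)\nabla_\top u,\nu\rangle$ first, and then check that the $(p-2)$ contributions cancel.

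After the substitution, the term $N|\nabla u|^{p-2}u_\nu$ combines with $-|\nabla u|^{p-2}u_\nu$ to give $(N-1)|\nabla u|^{p-2}u_\nu$. The mean curvature term gives $-(N-1)|\nabla u|^{2p-4}u_\nu^2\mathrm{M}_{\partial\Omega}$. The remaining term is $-|\nabla u|^{2p-4}u_\nu\Delta_\top u$, and I integrate it by parts on $\partial\Omega$ via Theorem \ref{thm: divergence part Om}, which gives $\int\langle\nabla_\top u,\nabla_\top(|\nabla u|^{2p-4}u_\nu)\rangle$. Since $|\nabla u|^{p-2}u_\nu=-\beta|u|^{p-2}u$ on $\partial\Omega$, I write $|\nabla u|^{2p-4}u_\nu=|\nabla u|^{p-2}\cdot(-\beta|u|^{p-2}u)$. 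Its tangential gradient then produces $-(p-1)\beta|\nabla u|^{p-2}|u|^{p-2}|\nabla_\top u|^2$, plus a term containing $\nabla_\top|\nabla u|^{p-2}$, which involves $(\nabla^2u)\nabla u$ against $\nabla_\top u$. That last term should cancel the leftover $(p-2)$ Hessian term from the previous step. The two $\beta$ contributions together give the coefficient $2(p-1)$. Collecting everything, I verify that the identity matches \eqref{eq: fundamental identity}, and as a check I confirm that it reduces to \cite[Thm. 3]{gamo26} when $p=2$.
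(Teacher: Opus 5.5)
Your proposal is correct and follows the paper's proof essentially step by step: the divergence theorem, Lemma \ref{lem:tangential} for the $\langle(\nabla^2u)\nabla u,\nu\rangle$ term, Reilly's identity, and integration by parts of the tangential Laplacian via the Robin condition, whose $\nabla_\top|\nabla u|^{p-2}$ contribution cancels the leftover $(p-2)$ Hessian term. The regrouping you flag as the main obstacle is immediate once you substitute the lemma in full: $|\nabla u|^{2p-4}u_\nu\langle(\nabla^2u)\nu,\nu\rangle+(p-2)|\nabla u|^{2p-6}u_\nu\langle(\nabla^2u)\nabla u,\nabla u\rangle$ is exactly $|\nabla u|^{p-2}u_\nu$ times the Reilly combination, and the only leftover is $-(p-2)|\nabla u|^{2p-6}u_\nu\langle(\nabla^2u)\nabla u,\nabla_\top u\rangle$, so merging the two Hessian terms into $(p-2)u_\nu^2\langle(\nabla^2u)\nabla u,\nu\rangle/|\nabla u|^2$ is an unnecessary detour.
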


\begin{proof}
    The proof proceed by direct computation and an application of Divergence Theorem. We just recall that, by Proposition \ref{prop properties u} and by Theorem \ref{thm regularity C2}, $u \in C^{2,\alpha}(\Omega\setminus Z)$, $u \in C^{2,\min \{ \alpha,\gamma \}}(\overline{\Omega_\varepsilon})$ and $\nabla u \neq 0$ on $\partial\Omega$. Furthermore, the set $Z$ of critical point of $u$ has zero Lebesgue measure. Hence, the integration and the computation below over $\Omega$ are always intended over $\Omega \setminus Z$. 
    
    We first compute the gradient of the $P$-function defined in \eqref{eq:P_function}. We have
    \begin{equation}\label{eq: nabla P}
        \nabla P = \nabla \left( \frac{2(p-1)}{p}|\nabla u|^p - 2u \right) = 2(p-1)|\nabla u|^{p-2}(\nabla^2 u)\nabla u - 2\nabla u \quad \text{in } \overline{\Omega}.
    \end{equation}
    Then, an integration over $\Omega$ of the function $ \mathcal{L}_u(P)$ along with the Divergence Theorem yields
    \begin{equation} \label{eq:integral_L_P}
        \begin{aligned}
        \int_\Omega \mathcal{L}_u(P) \d{x} &= \int_\Omega \div \left( |\nabla u|^{p-2} A_p(\nabla u)\nabla P \right) \d{x} = \int_{\partial\Omega} |\nabla u|^{p-2} \langle A_p(\nabla u)\nabla P, \nu \rangle \d{}\mathcal{H}^{N-1}.
        \end{aligned}
    \end{equation}
    By \eqref{eq:matrix_A} on the definition of $A_p(\nabla u)$ on non-critical points of $u$ (recall that on $\partial\Omega$ there are no critical points of $u$), we can split the argument of the boundary integral into two terms, namely
    \begin{equation*}
        |\nabla u|^{p-2} \langle A_p(\nabla u)\nabla P, \nu \rangle = \underbrace{|\nabla u|^{p-2} \langle \nabla P, \nu \rangle}_{=:T_1} + \underbrace{(p-2) |\nabla u|^{p-2} \frac{\langle \nabla u, \nabla P \rangle}{|\nabla u|^2} u_\nu}_{=:T_2},
    \end{equation*}
    where, by direct computations using \eqref{eq: nabla P},
    \begin{equation*}
        \begin{aligned}
        T_1 &= 2(p-1)|\nabla u|^{2p-4}\langle (\nabla^2 u)\nabla u, \nu \rangle - 2|\nabla u|^{p-2} u_\nu, 
        \\
        T_2 &= 2(p-1)(p-2)|\nabla u|^{2p-6} u_\nu \langle (\nabla^2 u)\nabla u, \nabla u \rangle - 2(p-2) |\nabla u|^{p-2} u_\nu.
        \end{aligned}
    \end{equation*}
    We now substitute the expression for $T_1+T_2$ into the argument of the boundary integral in \eqref{eq:integral_L_P} and we compute each resulting term. In particular, we first evaluate the boundary integral of the term $T_1$. To this aim, we use the formula for $|\nabla u|^{p-2}\langle(\nabla^2u)\nabla u,\nu\rangle$ on $\partial\Omega$ provided by Lemma \ref{lem:tangential}, yielding
    \begin{equation}\label{eq: int T_1}
        \begin{aligned}
            \int_{\partial\Omega} T_1 \d{}\mathcal{H}^{N-1} =& \,2(p-1)\int_{\partial\Omega}|\nabla u|^{2p-4} u_\nu \langle(\nabla^2u)\nu,\nu\rangle\d{}\mathcal{H}^{N-1}\\
            &-2(p-1)(p-2)\int_{\partial\Omega}|\nabla u|^{2p-6} u_\nu \langle(\nabla^2u)\nabla u,\nabla_\top u\rangle\d{}\mathcal{H}^{N-1}\\
            &-2(p-1)\int_{\partial\Omega}|\nabla u|^{2p-4} \mathrm{I\!I}(\nabla_\top u,\nabla_\top u) \d{}\mathcal{H}^{N-1}\\
            &-2\beta(p-1)^2\int_{\partial\Omega}|\nabla u|^{p-2}|u|^{p-2}|\nabla_\top u|^{2}\d{}\mathcal{H}^{N-1}\\
            &-2\int_{\partial\Omega}|\nabla u|^{p-2} u_\nu \d{}\mathcal{H}^{N-1}.
        \end{aligned}
    \end{equation}
    On the other hand, for the boundary integral of the term $T_2$, we simply get
    \begin{equation}\label{eq: int T_2}
        \begin{aligned}
        \int_{\partial\Omega} T_2 \d{}\mathcal{H}^{N-1} =
        & \, 2(p-1)(p-2)\int_{\partial\Omega}|\nabla u|^{2p-6} u_\nu \langle(\nabla^2u)\nabla u,\nabla u\rangle\d{}\mathcal{H}^{N-1}\\
        &-2(p-2)\int_{\partial\Omega}|\nabla u|^{p-2} u_\nu \d{}\mathcal{H}^{N-1}.
        \end{aligned}
    \end{equation}
    We now treat the first integral on the right-hand side of \eqref{eq: int T_1}. By Reilly's identity \eqref{eq: reilly formula} we get 
    \begin{equation*}
        \begin{aligned}
        |\nabla u|^{p-2} \langle(\nabla^2u)\nu,\nu\rangle =& \,N - (p-2)|\nabla u|^{p-4}\langle(\nabla^2u)\nabla u,\nabla u\rangle 
        \\
        &- |\nabla u|^{p-2} \Delta_\top u - (N-1) |\nabla u|^{p-2} u_\nu \mathrm{M}_{\partial\Omega} \quad \text{on } \partial\Omega.
        \end{aligned}
    \end{equation*}
    Hence, multiplying the previous relation by $2(p-1)|\nabla u|^{p-2} u_\nu $ and integrating over $\partial\Omega$, we obtain 
    \begin{equation}\label{eq: int u nu nu}
        \begin{aligned}
        2(p-1)\int_{\partial\Omega}|\nabla u|^{2p-4} u_\nu \langle(\nabla^2u)\nu,\nu\rangle\d{}\mathcal{H}^{N-1} 
        =& \, 2(p-1)N\int_{\partial\Omega}|\nabla u|^{p-2} u_\nu\d{}\mathcal{H}^{N-1}
        \\
        &-2(p-1)(p-2)\int_{\partial\Omega}|\nabla u|^{2p-6} u_\nu \langle(\nabla^2u)\nabla u,\nabla u\rangle\d{}\mathcal{H}^{N-1}
        \\
        & -2(p-1)\int_{\partial\Omega}|\nabla u|^{2p-4} u_\nu \Delta_\top u\d{}\mathcal{H}^{N-1} 
        \\
        &-2(p-1)(N-1)\int_{\partial\Omega}|\nabla u|^{2p-4}(u_\nu)^2 \mathrm{M}_{\partial\Omega}\d{}\mathcal{H}^{N-1}.
        \end{aligned}
    \end{equation}
    Now, we note that the term containing the tangential Laplacian can be rewritten using the Robin boundary conditions and the Divergence Theorem \ref{thm: divergence part Om}. In fact, it holds
    \begin{equation*}
        \begin{split}    
        \int_{\partial\Omega}|\nabla u|^{2p-4} u_\nu \Delta_\top u\d{}\mathcal{H}^{N-1}
        &= -\beta \int_{\partial\Omega}|\nabla u|^{p-2} |u|^{p-2} u \, \Delta_\top u\d{}\mathcal{H}^{N-1}
        \\
        &= \beta \int_{\partial\Omega} \langle \nabla_\top \Big(|\nabla u|^{p-2} |u|^{p-2} u \Big), \nabla_\top u \rangle \d{}\mathcal{H}^{N-1}.
        \end{split}
    \end{equation*}
    Now using the fact that
    \begin{equation*}
    \begin{split}
        \nabla (|\nabla u|^{p-2}) = (p-2) |\nabla u|^{p-4} (\nabla^2 u) \nabla u \quad \text{and} \quad
        \nabla (|u|^{p-2} u) = |u|^{p-2} \nabla u + (p-2) |u|^{p-4} u^2 \nabla u,
    \end{split}
    \end{equation*}
    we deduce that
    \begin{equation*}
    \begin{split}
        \nabla \Big(|\nabla u|^{p-2} |u|^{p-2} u \Big) &= |\nabla u|^{p-2} |u|^{p-2} \nabla u + (p-2) |\nabla u|^{p-2} |u|^{p-4} u^2 \nabla u 
        \\
        &\quad+ (p-2) |\nabla u|^{p-4} |u|^{p-2} u (\nabla^2 u) \nabla u.
    \end{split}
    \end{equation*}
    Hence, simplifying and using the definition of tangential gradient, i.e., $((\nabla^2 u) \nabla u)_\top = (\nabla^2 u) \nabla u - \langle(\nabla^2 u) \nabla u,\nu\rangle \nu$, we get 
    \begin{equation*}
    \begin{split}
        \nabla_\top \Big(|\nabla u|^{p-2} |u|^{p-2} u \Big) &= (p-1) |\nabla u|^{p-2} |u|^{p-2} \nabla_\top u 
         + (p-2) |\nabla u|^{p-4} |u|^{p-2} u (\nabla^2 u) \nabla u 
         \\
        &\quad
         - (p-2) |\nabla u|^{p-4} |u|^{p-2} u \langle(\nabla^2 u) \nabla u,\nu\rangle \nu,
    \end{split}
    \end{equation*}
    Thus, summing up previous relations, we conclude that 
    \begin{equation}\label{eq: int Delta_top u}
        \begin{aligned}
        -2(p-1) \int_{\partial\Omega}|\nabla u|^{2p-4} u_\nu \Delta_\top u \d{}\mathcal{H}^{N-1} =
        &-2(p-1)^2\beta\int_{\partial\Omega}|\nabla u|^{p-2}|u|^{p-2}|\nabla_\top u|^2\d{}\mathcal{H}^{N-1}\\
        &+2(p-1)(p-2)\int_{\partial\Omega}|\nabla u|^{2p-6} u_\nu \langle(\nabla^2u)\nabla u,\nabla_\top u\rangle\d{}\mathcal{H}^{N-1},
        \end{aligned}
    \end{equation}
    where we have used the Robin boundary condition and the orthogonality of $\nu$ with respect to $\nabla_\top u$.
    Finally, substituting \eqref{eq: int Delta_top u} in \eqref{eq: int u nu nu} and the resulting relation in \eqref{eq: int T_1}, along with adding \eqref{eq: int T_2}, we get
    \begin{equation*}
        \begin{aligned}
        &\int_{\partial\Omega} T_1 \d{}\mathcal{H}^{N-1} + \int_{\partial\Omega} T_2 \d{}\mathcal{H}^{N-1}
        = \,2(p-1)N\int_{\partial\Omega}|\nabla u|^{p-2} u_\nu \d{}\mathcal{H}^{N-1}
        \\
        &\underbrace{-2(p-1)(p-2)\int_{\partial\Omega}|\nabla u|^{2p-6} u_\nu \langle(\nabla^2u)\nabla u,\nabla u\rangle\d{}\mathcal{H}^{N-1}}
        -2(p-1)^2\beta\int_{\partial\Omega}|\nabla u|^{p-2}|u|^{p-2}|\nabla_\top u|^2\d{}\mathcal{H}^{N-1}
        \\
        &\underbrace{+2(p-1)(p-2)\int_{\partial\Omega}|\nabla u|^{2p-6} u_\nu \langle(\nabla^2u)\nabla u,\nabla_\top u\rangle\d{}\mathcal{H}^{N-1}}
        -2(p-1)(N-1)\int_{\partial\Omega}|\nabla u|^{2p-4} (u_\nu)^2 \mathrm{M}_{\partial\Omega}\d{}\mathcal{H}^{N-1}
        \\
        &\underbrace{-2(p-1)(p-2)\int_{\partial\Omega}|\nabla u|^{2p-6} u_\nu \langle(\nabla^2u)\nabla u,\nabla_\top u\rangle\d{}\mathcal{H}^{N-1}}
        -2(p-1)\int_{\partial\Omega}|\nabla u|^{2p-4} \mathrm{I\!I}(\nabla_\top u,\nabla_\top u) \d{}\mathcal{H}^{N-1}
        \\ 
        & -2(p-1)^2\beta\int_{\partial\Omega}|\nabla u|^{p-2}|u|^{p-2}|\nabla_\top u|^{2}\d{}\mathcal{H}^{N-1}
        -2\int_{\partial\Omega}|\nabla u|^{p-2} u_\nu \d{}\mathcal{H}^{N-1}
        \\
        & \underbrace{+2(p-1)(p-2)\int_{\partial\Omega}|\nabla u|^{2p-6} u_\nu \langle(\nabla^2u)\nabla u,\nabla u\rangle\d{}\mathcal{H}^{N-1}}
        -2(p-2)\int_{\partial\Omega}|\nabla u|^{p-2} u_\nu \d{}\mathcal{H}^{N-1}.
        \end{aligned}
    \end{equation*}
    Simplifying the terms in brackets and rearranging using the trivial facts
    \begin{equation*}
    \begin{split}
        &2(p-1)N - 2 - 2(p-2) = 2(p-1)(N-1),
        \\
        &2(p-1)^2\beta + 2(p-1)^2\beta = 4(p-1)^2\beta,
    \end{split}
    \end{equation*}
    yields
    \begin{equation*}
        \begin{aligned}
        &\int_{\partial\Omega} T_1 \d{}\mathcal{H}^{N-1} + \int_{\partial\Omega} T_2 \d{}\mathcal{H}^{N-1} 
        \\
        &= \,2(p-1)(N-1)\int_{\partial\Omega}|\nabla u|^{p-2} u_\nu \d{}\mathcal{H}^{N-1}
        - 4(p-1)^2\beta\int_{\partial\Omega}|\nabla u|^{p-2}|u|^{p-2}|\nabla_\top u|^2\d{}\mathcal{H}^{N-1}
        \\
        &\quad -2(p-1)(N-1)\int_{\partial\Omega}|\nabla u|^{2p-4}(u_\nu)^2 \mathrm{M}_{\partial\Omega}\d{}\mathcal{H}^{N-1}
        -2(p-1)\int_{\partial\Omega}|\nabla u|^{2p-4} \mathrm{I\!I}(\nabla_\top u,\nabla_\top u) \d{}\mathcal{H}^{N-1} 
        .
        \end{aligned}
    \end{equation*}
    Then, the identity of the statement follows by simply dividing by $2(p-1)$.
\end{proof}

We now adapt previous identity in order to obtain a formula suitable to prove the celebrated Soap Bubble Theorem using a solution of \eqref{eq:problem_p}. The following identity generalize to the $p$-Laplacian setting previously obtained relations: we refer to \cite[Thm. 4]{gamo26} and \cite[Thm. 2.2]{mapo19} for the Euclidean space $\R^N$ and \cite{RuHuCh24} for smooth Riemannian manifolds.

\begin{teo}[Identity for Alexandrov's Theorem] \label{teo: identity soap}
Let $u \in C^{1,\gamma}(\overline{\Omega})$ be the solution to \eqref{eq:problem_p}. Let $P$ and $\mathcal{L}_u$ be defined by \eqref{eq:P_function} and \eqref{eq: L(phi)_function}, respectively. Then, the following identity holds:
\begin{equation*}
    \begin{aligned}
    \frac{1}{2(p-1)}\int_\Omega \mathcal{L}_u(P) \d{x} 
    +\int_{\partial\Omega} |\nabla u|^{2p-4} \mathrm{I\!I}(\nabla_\top u,\nabla_\top u) \d{}\mathcal{H}^{N-1}
    + 2(p-1)\beta \int_{\partial\Omega} |\nabla u|^{p-2}|u|^{p-2}|\nabla_\top u|^2 \d{}\mathcal{H}^{N-1} 
    \\
    + (N-1) \int_{\partial\Omega} \Big(|\nabla u|^{p-2} u_\nu - R \Big)^2  \mathrm{M}_{0} \d{}\mathcal{H}^{N-1} 
    =
    - (N-1) \int_{\partial\Omega} |\nabla u|^{2p-4} (u_\nu)^2 (\mathrm{M}_{\partial\Omega} - \mathrm{M}_{0} ) \d{}\mathcal{H}^{N-1},  
    \end{aligned}
\end{equation*}
where
\begin{equation*}
    R = \frac{N|\Omega|}{|\partial\Omega|} \quad\text{and} \quad \mathrm{M}_0 = \frac{1}{R}. 
\end{equation*}
\end{teo}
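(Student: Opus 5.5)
The identity will follow from the Fundamental Identity (Theorem \ref{teo:fundamental}) by pure algebraic manipulation of its right-hand side; no new integration by parts is needed. I abbreviate $w := |\nabla u|^{p-2}u_\nu$ on $\partial\Omega$. By the Remark on the value of $R$ (divergence theorem applied to $\Delta_p u = N$), we have $\int_{\partial\Omega} w \,\d{}\mathcal{H}^{N-1} = N|\Omega| = R|\partial\Omega|$.

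The starting point is the right-hand side of \eqref{eq: fundamental identity}:
\[
(N-1)\int_{\partial\Omega} w \,\d{}\mathcal{H}^{N-1} - (N-1)\int_{\partial\Omega} w^2 \mathrm{M}_{\partial\Omega}\,\d{}\mathcal{H}^{N-1}.
\]
I split the second integral by adding and subtracting $\mathrm{M}_0 = 1/R$:
\[
-(N-1)\int_{\partial\Omega} w^2(\mathrm{M}_{\partial\Omega}-\mathrm{M}_0)\,\d{}\mathcal{H}^{N-1} - \frac{N-1}{R}\int_{\partial\Omega} w^2\,\d{}\mathcal{H}^{N-1}.
\]
The first piece is exactly the claimed right-hand side, so it remains to identify what is left over.

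For that, I expand the square:
\[
\frac{1}{R}\int_{\partial\Omega}(w-R)^2\,\d{}\mathcal{H}^{N-1} = \frac{1}{R}\int_{\partial\Omega} w^2\,\d{}\mathcal{H}^{N-1} - 2\int_{\partial\Omega} w\,\d{}\mathcal{H}^{N-1} + R|\partial\Omega|.
\]
Using $\int_{\partial\Omega} w = R|\partial\Omega|$, the last two terms combine to $-\int_{\partial\Omega} w$. Hence
\[
\int_{\partial\Omega} w\,\d{}\mathcal{H}^{N-1} - \frac{1}{R}\int_{\partial\Omega} w^2\,\d{}\mathcal{H}^{N-1} = -\frac{1}{R}\int_{\partial\Omega}(w-R)^2\,\d{}\mathcal{H}^{N-1} = -\int_{\partial\Omega}(w-R)^2\mathrm{M}_0\,\d{}\mathcal{H}^{N-1}.
\]

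Multiplying by $(N-1)$ and moving this term to the left-hand side of \eqref{eq: fundamental identity} gives the stated identity, with all other terms unchanged. The only point needing care is using the normalization $\int_{\partial\Omega} w = R|\partial\Omega|$ twice, with the correct signs; there is no genuine obstacle.
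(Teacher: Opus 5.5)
Your proposal is correct and is essentially the paper's proof: the paper also starts from the right-hand side of the Fundamental Identity, adds and subtracts $(N-1)\int_{\partial\Omega}|\nabla u|^{2p-4}(u_\nu)^2\mathrm{M}_0\,\d{}\mathcal{H}^{N-1}$, and expands $\big(|\nabla u|^{p-2}u_\nu\big)^2=\big(|\nabla u|^{p-2}u_\nu-R\big)^2+2R|\nabla u|^{p-2}u_\nu-R^2$. It then uses $\int_{\partial\Omega}|\nabla u|^{p-2}u_\nu\,\d{}\mathcal{H}^{N-1}=R|\partial\Omega|$ exactly as you do, so your sign bookkeeping matches and the identity follows.
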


\begin{proof}
    We work on the identity provided by Theorem \ref{teo:fundamental}. We notice that, adding and subtracting the term
    \begin{equation*}
        (N-1) \int_{\partial\Omega} |\nabla u|^{2p-4} (u_\nu)^2 \mathrm{M}_{0} \d{}\mathcal{H}^{N-1},
    \end{equation*}
    we deduce that
    \begin{equation*}
    \begin{aligned}
        &(N-1) \int_{\partial\Omega} |\nabla u|^{p-2} u_\nu \d{}\mathcal{H}^{N-1} - (N-1) \int_{\partial\Omega} |\nabla u|^{2p-4} (u_\nu)^2 \mathrm{M}_{\partial\Omega} \d{}\mathcal{H}^{N-1}
        \\
        &= - (N-1) \int_{\partial\Omega} |\nabla u|^{2p-4} (u_\nu)^2 (\mathrm{M}_{\partial\Omega} - \mathrm{M}_{0} ) \d{}\mathcal{H}^{N-1}  
        - (N-1) \int_{\partial\Omega} |\nabla u|^{2p-4} (u_\nu)^2 \mathrm{M}_{0} \d{}\mathcal{H}^{N-1} 
        \\
        &\quad + (N-1) \int_{\partial\Omega} |\nabla u|^{p-2} u_\nu \d{}\mathcal{H}^{N-1}
    \end{aligned}
    \end{equation*}
    Then, exploiting the trivial relation
    \begin{equation*}
        |\nabla u|^{2p-4} (u_\nu)^2 = \Big(|\nabla u|^{p-2} u_\nu - R \Big)^2 + 2 R|\nabla u|^{p-2} u_\nu -R^2,
    \end{equation*}
    we obtain that
    \begin{equation*}
    \begin{aligned}
        (N-1) \int_{\partial\Omega} |\nabla u|^{2p-4} (u_\nu)^2 \mathrm{M}_{0} \d{}\mathcal{H}^{N-1} 
        &=
        (N-1) \int_{\partial\Omega} \Big(|\nabla u|^{p-2} u_\nu - R \Big)^2  \mathrm{M}_{0} \d{}\mathcal{H}^{N-1} 
        \\
        &\quad+ 2 (N-1) \int_{\partial\Omega} R|\nabla u|^{p-2} u_\nu \mathrm{M}_{0} \d{}\mathcal{H}^{N-1} 
         - (N-1) \int_{\partial\Omega} R^2 \mathrm{M}_{0} \d{}\mathcal{H}^{N-1} 
        \\
        &
        = (N-1) \int_{\partial\Omega} \Big(|\nabla u|^{p-2} u_\nu - R \Big)^2  \mathrm{M}_{0} \d{}\mathcal{H}^{N-1} 
        \\
        &\quad + (N-1) \int_{\partial\Omega} |\nabla u|^{p-2} u_\nu \d{}\mathcal{H}^{N-1} 
    \end{aligned}
    \end{equation*}
    where, in the last equality, we have used that $R^2 \mathrm{M}_{0} = R$ (by the definition of $\mathrm{M}_0$) and the fact that, by Divergence Theorem, one has
    \begin{equation}\label{eq: boundary int of u_nu = N Omega}
        \int_{\partial\Omega} |\nabla u|^{p-2} u_\nu \d{}\mathcal{H}^{N-1} = \int_{\Omega} \Delta_p u \d{}x = N|\Omega| = R |\partial\Omega|. 
    \end{equation}
    Hence, collecting all previous formulas, we deduce that
    \begin{equation*}
    \begin{aligned}
        &(N-1) \int_{\partial\Omega} |\nabla u|^{p-2} u_\nu \d{}\mathcal{H}^{N-1} - (N-1) \int_{\partial\Omega} |\nabla u|^{2p-4} (u_\nu)^2 \mathrm{M}_{\partial\Omega} \d{}\mathcal{H}^{N-1}
        \\
        &= - (N-1) \int_{\partial\Omega} |\nabla u|^{2p-4} (u_\nu)^2 (\mathrm{M}_{\partial\Omega} - \mathrm{M}_{0} ) \d{}\mathcal{H}^{N-1}   - (N-1) \int_{\partial\Omega} \Big(|\nabla u|^{p-2} u_\nu - R \Big)^2  \mathrm{M}_{0} \d{}\mathcal{H}^{N-1},
    \end{aligned}
    \end{equation*}
    and the identity of the statement follows.
\end{proof}
    
We are now in the position to state and prove an identity to study the Serrin's problem for $p$-Laplacian with Robin boundary condition. The idea is to rewrite the Fundamental Identity of Theorem \ref{teo:fundamental} making suitable modifications to obtain the boundary integral of the overdetermined condition plus suitable constants (see \cite[Thm. 5]{gamo26} for the case $p=2$ and $C=C_0$). The sign of the appearing extra-terms will be analyzed in Section \ref{sec: rigidity} to obtain the aimed rigidity results. Let us underline that the following identity is very general and it holds for any sufficiently smooth solution $u$ of problem \eqref{intro eq:problem_p}, without any overdetermined condition in force.

\begin{teo}[Identity for Serrin's type Theorem] \label{teo: identity serrin}
Let $u \in C^{1,\gamma}(\overline{\Omega})$ be the solution to \eqref{eq:problem_p}. Let $P$ and $\mathcal{L}_u$ be defined by \eqref{eq:P_function} and \eqref{eq: L(phi)_function}, respectively. Then, the following identity holds:
\begin{equation}\label{eq: fundamental identity for serrin}
    \begin{aligned}
    &\frac{1}{2(p-1)}\int_\Omega \mathcal{L}_u(P) \d{x} +\int_{\partial\Omega} |\nabla u|^{2p-4} \mathrm{I\!I}(\nabla_\top u,\nabla_\top u) \d{}\mathcal{H}^{N-1} 
    \\
    &+ \frac{3p-4}{2} \beta \int_{\partial\Omega} |\nabla u|^{p-2}|u|^{p-2}|\nabla_\top u|^2 \d{}\mathcal{H}^{N-1} 
    + (p-1) \beta^{\frac{1}{p-1}} \int_{\partial\Omega} \Big(|\nabla u|^{p-2} u_\nu - R \Big)^2  \d{}\mathcal{H}^{N-1}  
    \\
    &
    + (p-1) \beta^{\frac{1}{p-1}}  \int_{\partial\Omega}  \left( \frac{1}{2} \frac{p-2}{p-1} \left(\frac{u_\nu}{|\nabla u|}\right)^{-\frac{p}{p-1}} \left(1- \left(\frac{u_\nu}{|\nabla u|}\right)^2 \right) - \left(1- \left(\frac{u_\nu}{|\nabla u|}\right)^{\frac{p-2}{p-1}} \right) \right) (|\nabla u|^{p-2} u_\nu)^2 \d{}\mathcal{H}^{N-1} 
    \\
    &
    + \left( (p-1) \beta^{\frac{1}{p-1}} + (p-1)N + 1 \right) R^{\frac{1}{p-1}}  \int_{\partial\Omega} \Big( R^{\frac{p-2}{p-1}} - (|\nabla u|^{p-2} u_\nu)^{\frac{p-2}{p-1}} \Big)  \d{}\mathcal{H}^{N-1}
    \\
    &
    = - \beta \int_{\partial\Omega} \Big( |\nabla u|^p + pNu - p |\nabla u|^{p-2} (u_\nu)^2 + \beta(N-1) |u|^p \mathrm{M}_{\partial\Omega}  - C_0 \Big) |u|^{p-2} \d{}\mathcal{H}^{N-1},  
    \end{aligned}
\end{equation}
where
\begin{equation*}
    R = \frac{N|\Omega|}{|\partial\Omega|} \quad \text{and} \quad  C_0 = (1-p)R^{\frac{p}{p-1}} - \left( \frac{R}{\beta} \right)^{\frac{1}{p-1}} \big( (p-1)N + 1 \big). 
\end{equation*}
\end{teo}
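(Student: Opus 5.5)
The plan is to start from the Fundamental Identity of Theorem \ref{teo:fundamental} and rewrite its right-hand side until the deficit of the overdetermined condition appears. Everything on $\partial\Omega$ will be expressed through two quantities:
\[
w:=|\nabla u|^{p-2}u_\nu, \qquad t:=\frac{u_\nu}{|\nabla u|}.
\]
By Proposition \ref{prop properties u} and the Robin condition we have $u<0$ on $\partial\Omega$ and $w=\beta|u|^{p-1}>0$, so $t\in(0,1]$. We then have the conversion rules
\[
|u|=(w/\beta)^{\frac1{p-1}},\qquad |\nabla u|=(w/t)^{\frac1{p-1}},\qquad u_\nu=t^{\frac{p-2}{p-1}}w^{\frac1{p-1}},\qquad |\nabla_\top u|^2=|\nabla u|^2(1-t^2).
\]
We also use \eqref{eq: boundary int of u_nu = N Omega}, namely $\int_{\partial\Omega}w=R|\partial\Omega|$.

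Next we expand the right-hand side of \eqref{eq: fundamental identity for serrin} term by term.
\begin{itemize}
\item The curvature term gives $-\beta\cdot\beta(N-1)|u|^{2p-2}\mathrm{M}_{\partial\Omega}=-(N-1)w^2\mathrm{M}_{\partial\Omega}$. This is exactly the curvature term on the right of \eqref{eq: fundamental identity}, so it cancels when we subtract the two identities.
\item The term $-\beta pNu|u|^{p-2}$ equals $pNw$.
\item The constant term gives $\beta C_0|u|^{p-2}=\beta^{\frac1{p-1}}C_0\,w^{\frac{p-2}{p-1}}$.
\end{itemize}
For the two gradient terms we split $|\nabla u|^p=|\nabla u|^{p-2}(u_\nu^2+|\nabla_\top u|^2)$. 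This yields
\[
(p-1)\beta|\nabla u|^{p-2}u_\nu^2|u|^{p-2}-\beta|\nabla u|^{p-2}|u|^{p-2}|\nabla_\top u|^2 .
\]
With the conversion rules, the first piece becomes $(p-1)\beta^{\frac1{p-1}}t^{\frac{p-2}{p-1}}w^2$. For the second piece we compute $|\nabla u|^{p-2}|u|^{p-2}|\nabla_\top u|^2=\beta^{-\frac{p-2}{p-1}}t^{-\frac p{p-1}}(1-t^2)w^2$.

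The coefficient $2(p-1)\beta$ of the tangential term in \eqref{eq: fundamental identity} must be lowered to $\frac{3p-4}2\beta$. To do this, I will split $2(p-1)=\frac{3p-4}{2}+\frac p2$. The portion $\frac p2\beta$, minus the $\beta$ coming from the deficit, leaves $\frac{p-2}{2}\beta$ times the tangential integrand. Rewritten in $(w,t)$, this is precisely the first summand of the fifth term on the left of \eqref{eq: fundamental identity for serrin}, namely $(p-1)\beta^{\frac1{p-1}}\cdot\frac12\frac{p-2}{p-1}t^{-\frac p{p-1}}(1-t^2)w^2$.

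In the piece $(p-1)\beta^{\frac1{p-1}}t^{\frac{p-2}{p-1}}w^2$ we write $t^{\frac{p-2}{p-1}}=1-(1-t^{\frac{p-2}{p-1}})$. The correction gives the second summand of the fifth term. The main part is expanded as $w^2=(w-R)^2+2Rw-R^2$, which produces the $(p-1)\beta^{\frac1{p-1}}\int(w-R)^2$ term plus terms linear in $w$ and constants.

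Finally, we collect everything that is left: the linear terms in $w$, the constants $R^2$, the $(N-1)\int w$ on the right of \eqref{eq: fundamental identity}, and the $C_0$ term $\beta^{\frac1{p-1}}C_0w^{\frac{p-2}{p-1}}$. Using $\int w=R|\partial\Omega|$ and inserting the explicit $C_0$, in which $\beta^{\frac1{p-1}}(R/\beta)^{\frac1{p-1}}=R^{\frac1{p-1}}$, the linear and constant contributions should assemble into
\[
\big((p-1)\beta^{\frac1{p-1}}+(p-1)N+1\big)R^{\frac1{p-1}}\int_{\partial\Omega}\big(R^{\frac{p-2}{p-1}}-w^{\frac{p-2}{p-1}}\big).
\]
I expect this last bookkeeping step to be the main obstacle. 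The constants $(1-p)R^{\frac p{p-1}}$ and $((p-1)N+1)$ in $C_0$ must exactly absorb the linear terms $pN R|\partial\Omega|$, $(N-1)R|\partial\Omega|$ and $(p-1)\beta^{\frac1{p-1}}R^2|\partial\Omega|$. This works only after replacing each $|\partial\Omega|\,R^{\frac{p}{p-1}}$-type constant by $R^{\frac1{p-1}}\int R^{\frac{p-2}{p-1}}$, and I would check this exponent matching carefully, first in the case $p=2$ against \cite[Thm. 5]{gamo26}.
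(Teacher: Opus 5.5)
Your route is the same as the paper's. The split $2(p-1)=\frac{3p-4}{2}+\frac p2$ produces exactly the $\frac{p-2}{2}\beta$ tangential term that the paper adds and subtracts in \eqref{eq: final added term for serrin identity}. Your handling of $t^{\frac{p-2}{p-1}}w^2$ through $w^2=(w-R)^2+2Rw-R^2$ is the paper's computation of $Q_2$. I checked your conversion rules, the curvature cancellation and the tangential coefficient ($-2(p-1)+\frac{3p-4}{2}+\frac{p-2}{2}+1=0$), and they are all correct.

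The gap is the last step, which you only asserted (``should assemble''): it does not produce the displayed coefficient. After your cancellations, what remains to be proved is
\[
\begin{aligned}
&(N-1)\int_{\partial\Omega} w+(p-1)\beta^{\frac1{p-1}}\int_{\partial\Omega}(w-R)^2+K\int_{\partial\Omega}\big(R^{\frac{p-2}{p-1}}-w^{\frac{p-2}{p-1}}\big)\\
&\qquad=(p-1)\beta^{\frac1{p-1}}\int_{\partial\Omega}w^2+pN\int_{\partial\Omega}w+\beta^{\frac1{p-1}}C_0\int_{\partial\Omega}w^{\frac{p-2}{p-1}},
\end{aligned}
\]
where $K$ is the coefficient of the last term on the left of \eqref{eq: fundamental identity for serrin}.
\begin{itemize}
\item Matching the $w^{\frac{p-2}{p-1}}$ terms forces $K=-\beta^{\frac1{p-1}}C_0=(p-1)\beta^{\frac1{p-1}}R^{\frac p{p-1}}+\big((p-1)N+1\big)R^{\frac1{p-1}}$.
\item With this $K$, the constant terms also balance, using $\int_{\partial\Omega}w=R|\partial\Omega|$.
\item With the displayed $K=\big((p-1)\beta^{\frac1{p-1}}+(p-1)N+1\big)R^{\frac1{p-1}}$, the two sides differ by $(p-1)\beta^{\frac1{p-1}}R^{\frac1{p-1}}(1-R)\int_{\partial\Omega}\big(R^{\frac{p-2}{p-1}}-w^{\frac{p-2}{p-1}}\big)$. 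For $p>2$ this vanishes only if $R=1$ or $w$ is constant on $\partial\Omega$.
\end{itemize}

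So the displayed coefficient is a misprint in the statement, not a flaw in your strategy. Two things confirm this. First, the displayed term breaks scale invariance: under $x\mapsto\lambda x$, $\beta\mapsto\lambda^{1-p}\beta$ it scales like $\lambda^{N-1}$, while every other term scales like $\lambda^{N}$. Second, the paper's own \eqref{eq: Q1} and \eqref{eq: Q_2 final} produce exactly $(p-1)\beta^{\frac1{p-1}}R^{\frac p{p-1}}+((p-1)N+1)R^{\frac1{p-1}}$ before the two pieces are merged into the displayed form.

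Your replacement rule is where this slips. $R^{\frac1{p-1}}\int_{\partial\Omega}R^{\frac{p-2}{p-1}}=R|\partial\Omega|$ correctly absorbs $(N-1)R|\partial\Omega|$ and $pNR|\partial\Omega|$. However, the constant $(p-1)\beta^{\frac1{p-1}}R^2|\partial\Omega|$ must be written as $(p-1)\beta^{\frac1{p-1}}R^{\frac p{p-1}}\int_{\partial\Omega}R^{\frac{p-2}{p-1}}$, so that it pairs with the $(1-p)R^{\frac p{p-1}}$ part of $C_0$.

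Your planned check at $p=2$ cannot detect this, because there $R^{0}-w^{0}\equiv 0$; the scaling test would. The corrected identity still serves its purpose downstream: its coefficient is positive, so Lemma \ref{lem:jensen ineq} gives the sign exactly as used in Theorem \ref{thm rigidity serrin integral}.
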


\begin{proof}
    We work again on the identity of Theorem \ref{teo:fundamental}. The aim is to make the right-hand side of \eqref{eq: fundamental identity for serrin} appear. Hence, we define
    \begin{equation*}
        f(u):=  |\nabla u|^p + pNu - p |\nabla u|^{p-2} (u_\nu)^2 + \beta(N-1) |u|^p \mathrm{M}_{\partial\Omega} \quad \text{on }\partial\Omega,
    \end{equation*}
    and we multiply this quantity for $\beta |u|^{p-2}$ and we integrate over $\partial\Omega$. We get
    \begin{equation}\label{eq: int beta delta u u^p-2}
    \begin{split}
        \beta \int_{\partial\Omega} f(u) |u|^{p-2} \d{}\mathcal{H}^{N-1} =  \,&\beta \int_{\partial\Omega} |\nabla u|^p |u|^{p-2} \d{}\mathcal{H}^{N-1} + p \beta N  \int_{\partial\Omega}|u|^{p-2} u \d{}\mathcal{H}^{N-1}
        \\
        &-p \beta \int_{\partial\Omega}  |\nabla u|^{p-2} |u|^{p-2} (u_\nu)^2 \d{}\mathcal{H}^{N-1} + (N-1) \int_{\partial\Omega}  |\nabla u|^{2p-4} (u_\nu)^2 \mathrm{M}_{\partial\Omega} \d{}\mathcal{H}^{N-1},
    \end{split}
    \end{equation}
    where in the last term we have used that, by Robin boundary conditions,
    \[
    \beta |u|^{p} \beta |u|^{p-2} = \beta^2 |u|^{p-2}|u|^{p-2} u^2 = |\nabla u|^{2p-4} (u_\nu)^2.
    \]

    Now, we start from the identity \eqref{eq: fundamental identity} provided by Theorem \ref{teo:fundamental}, and we add and subtract the term 
    \begin{equation*}
    \beta \int_{\partial\Omega} |\nabla u|^p |u|^{p-2} \d{}\mathcal{H}^{N-1} +  p \beta N  \int_{\partial\Omega}|u|^{p-2} u \d{}\mathcal{H}^{N-1}
    -p \beta \int_{\partial\Omega}  |\nabla u|^{p-2} |u|^{p-2} (u_\nu)^2 \d{}\mathcal{H}^{N-1},
    \end{equation*}
    in order to obtain the boundary integral $\displaystyle -\beta \int_{\partial\Omega} f(u) |u|^{p-2} \d{}\mathcal{H}^{N-1}$. In fact, by \eqref{eq: fundamental identity} and by \eqref{eq: int beta delta u u^p-2}, we obtain 
    \begin{equation*}
    \begin{aligned}
    &\frac{1}{2(p-1)}\int_\Omega \mathcal{L}_u(P) \d{x}+\int_{\partial\Omega} |\nabla u|^{2p-4} \mathrm{I\!I}(\nabla_\top u,\nabla_\top u) \d{}\mathcal{H}^{N-1}+ 2\beta(p-1) \int_{\partial\Omega} |\nabla u|^{p-2}|u|^{p-2}|\nabla_\top u|^2 \d{}\mathcal{H}^{N-1} \\
    & = (N-1) \int_{\partial\Omega} |\nabla u|^{p-2} u_\nu \d{}\mathcal{H}^{N-1}
    + \beta \int_{\partial\Omega} |\nabla u|^p |u|^{p-2} \d{}\mathcal{H}^{N-1} + p \beta N  \int_{\partial\Omega}|u|^{p-2} u \d{}\mathcal{H}^{N-1}
    \\
    &\quad 
    -p \beta \int_{\partial\Omega}  |\nabla u|^{p-2} |u|^{p-2} (u_\nu)^2 \d{}\mathcal{H}^{N-1} - \beta \int_{\partial\Omega} f(u) |u|^{p-2} \d{}\mathcal{H}^{N-1},
    \end{aligned}
    \end{equation*}
    which, by splitting the term $|\nabla u|^p |u|^{p-2}$ into $|\nabla u|^{p-2} |u|^{p-2} \left(|\nabla_\top u|^2 + (u_\nu)^2\right)$, by using Robin boundary conditions, and by rearranging terms, yields
    \begin{equation*}
    \begin{aligned}
    \frac{1}{2(p-1)}\int_\Omega \mathcal{L}_u(P) \d{x} \, +& \int_{\partial\Omega} |\nabla u|^{2p-4} \mathrm{I\!I}(\nabla_\top u,\nabla_\top u) \d{}\mathcal{H}^{N-1} 
    + (2p-3) \beta \int_{\partial\Omega} |\nabla u|^{p-2}|u|^{p-2}|\nabla_\top u|^2 \d{}\mathcal{H}^{N-1} 
    \\
    =&  -((p-1)N +1) \int_{\partial\Omega} |\nabla u|^{p-2} u_\nu \d{}\mathcal{H}^{N-1}
    - (p-1) \beta \int_{\partial\Omega} |\nabla u|^{p-2} |u|^{p-2} (u_\nu)^2 \d{}\mathcal{H}^{N-1} 
    \\
    &- \beta \int_{\partial\Omega} f(u) |u|^{p-2} \d{}\mathcal{H}^{N-1}.
    \end{aligned}
    \end{equation*}
    
    The conclusion of the statement follows, by adding and subtracting the term $\displaystyle \beta \int_{\partial\Omega} C_0 |u|^{p-2} \d{}\mathcal{H}^{N-1}$ to the right-hand side of previous equation, once we have verified that 
    \begin{equation}\label{eq:C0 u^(p-2) to conclude the proof}
    \begin{split}
    &-((p-1)N +1) \int_{\partial\Omega} |\nabla u|^{p-2} u_\nu \d{}\mathcal{H}^{N-1}
    - (p-1) \beta \int_{\partial\Omega} |\nabla u|^{p-2} |u|^{p-2} (u_\nu)^2 \d{}\mathcal{H}^{N-1} 
    - \beta \int_{\partial\Omega} C_0 |u|^{p-2} \d{}\mathcal{H}^{N-1}
    \\
    &= 
    - (p-1) \beta^{\frac{1}{p-1}} \int_{\partial\Omega} \Big(|\nabla u|^{p-2} u_\nu - R \Big)^2  \d{}\mathcal{H}^{N-1} 
    - (p-1) \beta^{\frac{1}{p-1}} R^{\frac{p}{p-1}}  \int_{\partial\Omega} \Big( R^{\frac{p-2}{p-1}} - (|\nabla u|^{p-2} u_\nu)^{\frac{p-2}{p-1}} \Big)  \d{}\mathcal{H}^{N-1} 
    \\
    &
    - (p-1) \beta^{\frac{1}{p-1}}  \int_{\partial\Omega}  \left( \frac{1}{2} \frac{p-2}{p-1} \left(\frac{u_\nu}{|\nabla u|}\right)^{-\frac{p}{p-1}} \left(1- \left(\frac{u_\nu}{|\nabla u|}\right)^2 \right) - \left(1- \left(\frac{u_\nu}{|\nabla u|}\right)^{\frac{p-2}{p-1}} \right) \right) (|\nabla u|^{p-2} u_\nu)^2 \d{}\mathcal{H}^{N-1} 
    \\
    &
    - \big( (p-1)N + 1 \big) R^{\frac{1}{p-1}}  \int_{\partial\Omega} \Big( R^{\frac{p-2}{p-1}} - (|\nabla u|^{p-2} u_\nu)^{\frac{p-2}{p-1}} \Big)  \d{}\mathcal{H}^{N-1}
    + \frac{p-2}{2}\beta \int_{\partial\Omega} |\nabla u|^{p-2}|u|^{p-2}|\nabla_\top u|^2 \d{}\mathcal{H}^{N-1}.
    \end{split}
    \end{equation}
    Thus, we now aim to establish the validity of \eqref{eq:C0 u^(p-2) to conclude the proof}, proceeding by direct computation.
    By the definition of $C_0$, we have
    \begin{equation*}
        -\beta \int_{\partial\Omega} C_0 |u|^{p-2} \d{}\mathcal{H}^{N-1} = (p-1) \beta \int_{\partial\Omega} R^{\frac{p}{p-1}} |u|^{p-2} \d{}\mathcal{H}^{N-1} + \big( (p-1)N + 1 \big) R^{\frac{1}{p-1}} \int_{\partial\Omega} \beta^{\frac{p-2}{p-1}} |u|^{p-2} \d{}\mathcal{H}^{N-1}.
    \end{equation*}
    By the Robin boundary condition and using the fact that $u < 0$ on $\overline{\Omega}$, we deduce that 
    \begin{equation}\label{eq: u^p-2}
        |u|^{p-2} = \left(\frac{|\nabla u|^{p-2} u_\nu}{\beta}\right)^\frac{p-2}{p-1} \quad \text{on }\partial\Omega,
    \end{equation}
    which implies 
    \begin{align*}
        -\beta \int_{\partial\Omega} C_0 |u|^{p-2} \d{}\mathcal{H}^{N-1} =& \,(p-1) \beta^{\frac{1}{p-1}} \int_{\partial\Omega} R^{\frac{p}{p-1}} (|\nabla u|^{p-2} u_\nu)^{\frac{p-2}{p-1}} \d{}\mathcal{H}^{N-1} 
        \\
        &+ \big( (p-1)N + 1 \big) R^{\frac{1}{p-1}} \int_{\partial\Omega}  (|\nabla u|^{p-2} u_\nu)^{\frac{p-2}{p-1}}  \d{}\mathcal{H}^{N-1}.
    \end{align*}        
    Thus, in view of the above equation, the left-hand side of \eqref{eq:C0 u^(p-2) to conclude the proof} is given by the sum of two contributions, namely
    \begin{equation}\label{eq:C0 u^(p-2) to conclude the proof 2}
    \begin{split}
    & \underbrace{- \big( (p-1)N + 1 \big)  \int_{\partial\Omega} |\nabla u|^{p-2} u_\nu \d{}\mathcal{H}^{N-1}
    + \big( (p-1)N + 1 \big) R^{\frac{1}{p-1}} \int_{\partial\Omega}  (|\nabla u|^{p-2} u_\nu)^{\frac{p-2}{p-1}}  \d{}\mathcal{H}^{N-1}}_{=:Q_1}
    \\
    & 
    \underbrace{- (p-1) \beta \int_{\partial\Omega} |\nabla u|^{p-2} |u|^{p-2} (u_\nu)^2 \d{}\mathcal{H}^{N-1} 
    + (p-1) \beta^{\frac{1}{p-1}} \int_{\partial\Omega} R^{\frac{p}{p-1}} (|\nabla u|^{p-2} u_\nu)^{\frac{p-2}{p-1}} \d{}\mathcal{H}^{N-1}}_{=:Q_2}.
    \end{split}
    \end{equation}
    First we notice that, by simply using \eqref{eq: boundary int of u_nu = N Omega}, one obtains 
    \begin{equation}\label{eq: Q1}
        Q_1 = - \big( (p-1)N + 1 \big) R^{\frac{1}{p-1}}  \int_{\partial\Omega} \Big( R^{\frac{p-2}{p-1}} - (|\nabla u|^{p-2} u_\nu)^{\frac{p-2}{p-1}} \Big)  \d{}\mathcal{H}^{N-1}.
    \end{equation}
    Hence, it remains to treat $Q_2$. By \eqref{eq: u^p-2} and by adding and subtracting the term $\displaystyle (p-1) \beta^{\frac{1}{p-1}}  R^2 |\partial\Omega|$, we deduce that 
    \begin{equation}\label{eq: Q_2 first}
    \begin{split}
    Q_2  =& - (p-1) \beta^{\frac{1}{p-1}} \int_{\partial\Omega} |\nabla u|^{p-2} (|\nabla u|^{p-2} u_\nu)^{\frac{p-2}{p-1}} (u_\nu)^2 \d{}\mathcal{H}^{N-1} 
    \\
    & + (p-1) \beta^{\frac{1}{p-1}} \int_{\partial\Omega} R^{\frac{p}{p-1}} (|\nabla u|^{p-2} u_\nu)^{\frac{p-2}{p-1}} \d{}\mathcal{H}^{N-1}
    \\
    &  + (p-1) \beta^{\frac{1}{p-1}} \int_{\partial\Omega} R^2 \d{}\mathcal{H}^{N-1} - (p-1) \beta^{\frac{1}{p-1}} \int_{\partial\Omega} R^2 \d{}\mathcal{H}^{N-1} 
    \\
     = & - (p-1) \beta^{\frac{1}{p-1}} R^{\frac{p}{p-1}} \int_{\partial\Omega} \Big( R^{\frac{p-2}{p-1}} - (|\nabla u|^{p-2} u_\nu)^{\frac{p-2}{p-1}} \Big) \d{}\mathcal{H}^{N-1} 
    \\
    &  + (p-1) \beta^{\frac{1}{p-1}} \int_{\partial\Omega} \Big( R^2 - |\nabla u|^{p-2} (|\nabla u|^{p-2} u_\nu)^{\frac{p-2}{p-1}} (u_\nu)^2 \Big) \d{}\mathcal{H}^{N-1}. 
    \end{split}
    \end{equation}
    Now note that a direct computation (using the trivial algebraic identity $(p-2)\left( \frac{p-2}{p-1}\right) = p-2 - \frac{p-2}{p-1} $ to compute the exponents of the term $|\nabla u|$), yields
    \begin{equation}\label{eq: int R^2 - () u_nu^2}
    \begin{split}
        \int_{\partial\Omega} \Big( R^2 - |\nabla u|^{p-2} (|\nabla u|^{p-2} u_\nu)^{\frac{p-2}{p-1}} (u_\nu)^2 \Big) \d{}\mathcal{H}^{N-1} =& \int_{\partial\Omega} \left( R^2 - \left(\frac{u_\nu}{|\nabla u|}\right)^{\frac{p-2}{p-1}} (|\nabla u|^{p-2} u_\nu)^2 \right) \d{}\mathcal{H}^{N-1} 
        \\
        = & - \int_{\partial\Omega} \Big(|\nabla u|^{p-2} u_\nu - R \Big)^2  \d{}\mathcal{H}^{N-1} 
        \\
        &+
        \int_{\partial\Omega} \left(1- \left(\frac{u_\nu}{|\nabla u|}\right)^{\frac{p-2}{p-1}} \right) (|\nabla u|^{p-2} u_\nu)^2 \d{}\mathcal{H}^{N-1}, 
    \end{split}
    \end{equation}
    where, in the last equality, we used the fact that, by \eqref{eq: boundary int of u_nu = N Omega},
    \begin{equation*}
         \int_{\partial\Omega} \Big(|\nabla u|^{p-2} u_\nu - R \Big)^2  \d{}\mathcal{H}^{N-1} =  \int_{\partial\Omega} \Big( (|\nabla u|^{p-2} u_\nu)^2 - R^2 \Big)  \d{}\mathcal{H}^{N-1}.
    \end{equation*}
    Finally, substituting \eqref{eq: int R^2 - () u_nu^2} into \eqref{eq: Q_2 first} and adding and subtracting the last term 
    \begin{equation}\label{eq: final added term for serrin identity}
        (p-1) \beta^{\frac{1}{p-1}}  \int_{\partial\Omega}  \frac{1}{2} \frac{p-2}{p-1} \left(\frac{u_\nu}{|\nabla u|}\right)^{-\frac{p}{p-1}} \left(1- \left(\frac{u_\nu}{|\nabla u|}\right)^2 \right) (|\nabla u|^{p-2} u_\nu)^2 \d{}\mathcal{H}^{N-1},
    \end{equation}
    we obtain
    \begin{equation}\label{eq: Q_2 final}
    \begin{split}
        Q_2 =& - (p-1) \beta^{\frac{1}{p-1}} R^{\frac{p}{p-1}} \int_{\partial\Omega} \Big( R^{\frac{p-2}{p-1}} - (|\nabla u|^{p-2} u_\nu)^{\frac{p-2}{p-1}} \Big) \d{}\mathcal{H}^{N-1}
        \\
        & - (p-1) \beta^{\frac{1}{p-1}} \int_{\partial\Omega} \Big(|\nabla u|^{p-2} u_\nu - R \Big)^2  \d{}\mathcal{H}^{N-1} + \frac{p-2}{2}\beta \int_{\partial\Omega} |\nabla u|^{p-2}|u|^{p-2}|\nabla_\top u|^2 \d{}\mathcal{H}^{N-1}
        \\
        & - (p-1) \beta^{\frac{1}{p-1}}  \int_{\partial\Omega}  \left( \frac{1}{2} \frac{p-2}{p-1} \left(\frac{u_\nu}{|\nabla u|}\right)^{-\frac{p}{p-1}} \left(1- \left(\frac{u_\nu}{|\nabla u|}\right)^2 \right) - \left(1- \left(\frac{u_\nu}{|\nabla u|}\right)^{\frac{p-2}{p-1}} \right) \right) (|\nabla u|^{p-2} u_\nu)^2 \d{}\mathcal{H}^{N-1}, 
    \end{split}
    \end{equation}
    where, we have used that, by 
    \begin{equation*}
        |\nabla u|^2 \left(1- \left(\frac{u_\nu}{|\nabla u|}\right)^2 \right) = |\nabla_\top u|^2,\quad  (u_\nu)^{-\frac{p}{p-1}} (u_\nu)^2 = (u_\nu)^{\frac{p-2}{p-1}},\quad |\nabla u|^{\frac{p}{p-1}} |\nabla u|^{-2} |\nabla u|^{2p-4} = |\nabla u|^{p-2} |\nabla u|^{\frac{(p-2)^2}{p-1}}
    \end{equation*}
    and by the relation \eqref{eq: u^p-2}, the term in \eqref{eq: final added term for serrin identity} is equal to
    \begin{equation*}
        \frac{p-2}{2}\beta \int_{\partial\Omega} |\nabla u|^{p-2}|u|^{p-2}|\nabla_\top u|^2 \d{}\mathcal{H}^{N-1}.
    \end{equation*}
    Thus, substituting \eqref{eq: Q1} and \eqref{eq: Q_2 final} into \eqref{eq:C0 u^(p-2) to conclude the proof 2}, we conclude that \eqref{eq:C0 u^(p-2) to conclude the proof} holds and the identity of the statement follows at once.
    \end{proof}

    \section{Rigidity results and overdetermined problems}\label{sec: rigidity}
    This section is devoted to the analysis of the overdetermined problem \eqref{intro eq: overdetermined problem} described in the Introduction and to prove the main result of the paper, namely Theorem \ref{intro main thm rigidity serrin}. First we state three useful lemmas from which we will deduce the sign of some integral appearing in the identities studied in Section \ref{sec: integral identities}.
    
    We start with a lemma on the positivity of the linearized operator $\mathcal{L}_u(P)$ in $\Omega \setminus Z$ where $Z$ is the set of critical points of $u$. The lemma holds in general for $(M, g)$ a $n$-dimensional complete noncompact Riemannian manifold with nonnegative Ricci curvature, and $\Omega \subset M$ a smooth bounded connected domain (see \cite[Lem.\,4]{RuHuCh24}).

    \begin{lemma}\label{lemma positivity L(P)}
        Let $\Omega$ be an open bounded connected subset of $\R^N$ with $C^{2,\alpha}$ boundary, for $\alpha \in (0,1)$. Let $u \in C^{1,\gamma}(\overline{\Omega})$ be the solution to \eqref{eq:problem_p}. Let $P$ and $\mathcal{L}_u$ be defined by \eqref{eq:P_function} and \eqref{eq: L(phi)_function}, respectively. Then
        \begin{equation*}
            \mathcal{L}_u(P) \geq 0
        \end{equation*}
        in $\Omega\setminus Z$, where $Z := \{x \in \Omega \colon |\nabla u| = 0\}$. Moreover, 
        \begin{equation*}
            \mathcal{L}_u(P) = 0  \iff u \text{ is radially symmetric }.
        \end{equation*}
    \end{lemma}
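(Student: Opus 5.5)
On $\Omega\setminus Z$, where $u\in C^{2,\alpha}$ and $\nabla u\neq0$, I would compute $\mathcal{L}_u(P)$ directly and reduce it to a Newton-type quantity. The cleanest route uses the matrix $W:=\nabla\big(|\nabla u|^{p-2}\nabla u\big)=|\nabla u|^{p-2}A_p(\nabla u)\nabla^2u$. Its trace is $\mathrm{tr}\,W=\Delta_p u=N$.

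From \eqref{eq: nabla P} we have
\[
\nabla P=2(p-1)|\nabla u|^{p-2}\nabla^2u\nabla u-2\nabla u .
\]
We also have $\nabla\big(\tfrac{p-1}{p}|\nabla u|^p\big)=(p-1)|\nabla u|^{p-2}\nabla^2 u\nabla u$. By \eqref{eq: L(u)_u}, $\mathcal{L}_u(2u)=2(p-1)N$. So it suffices to compute $\mathcal{L}_u(F)$ for $F=\tfrac{2(p-1)}{p}|\nabla u|^p$.

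Write $V=|\nabla u|^{p-2}\nabla u$. The key observation is that
\[
|\nabla u|^{p-2}A_p(\nabla u)\nabla F = 2(p-1)\,|\nabla u|^{p-2}A_p\nabla^2u\nabla u = 2(p-1)\,W\nabla u ,
\]
using the symmetry of $A_p\nabla^2u$ up to the weight. A cleaner choice is $W^{T}\nabla u$, i.e. $\big(\nabla V\big)^{T}\nabla u$, which contains a term $\nabla^2u\,V$; I would verify which form is correct by a direct index computation. Then
\[
\mathcal{L}_u(F)=2(p-1)\,\partial_i\big(\partial_j V_i\,\partial_j u\big)
\]
(modulo that check). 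Expanding gives
\[
2(p-1)\big(\partial_j(\div V)\,\partial_ju+\partial_jV_i\,\partial_{ij}u\big).
\]
Since $\div V=N$ is constant, the first term vanishes. The second term equals $2(p-1)\,\mathrm{tr}(W\nabla^2u)$, with $\nabla^2 u=|\nabla u|^{2-p}A_p^{-1}W$.

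Hence
\[
\mathcal{L}_u(P)=2(p-1)\big(\mathrm{tr}(W\nabla^2u)-N\big).
\]
Now set $B:=|\nabla u|^{(p-2)/2}A_p^{1/2}\nabla^2u A_p^{1/2}$, which is symmetric. Then $\mathrm{tr}(W\nabla^2u)=\|B\|^2$ and $\mathrm{tr}B=\mathrm{tr}W=N$. By Newton's inequality (Proposition~\ref{prop newton}), $\|B\|^2\ge N^2/N=N$, so $\mathcal{L}_u(P)\ge0$. Equality holds iff $B=I_N$, i.e. $W=I_N$ pointwise: $\nabla V=I_N$ on $\Omega\setminus Z$.

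For the rigidity statement, if $\mathcal{L}_u(P)=0$ then $\nabla V=I$ on each component of $\Omega\setminus Z$, so $V=x-z$ locally. Since $V$ is continuous on $\overline\Omega$ (as $u\in C^{1,\gamma}$) and $Z$ has measure zero, $V$ is continuous with distributional gradient $I$ off a null set. I would argue instead that $Z$ consists of the points where $V=0$, and $V=x-z$ forces $Z=\{z\}$, so $\nabla V=I$ in $\mathcal D'(\Omega)$ and hence $V=x-z$ globally. This gives
\[
\nabla u=|x-z|^{\frac{2-p}{p-1}}(x-z),
\]
so $u$ is radial about $z$. Conversely, for radial $u$ (as in the computation of $q^z$) one has $V=x-z$, so $W=I$ and $\mathcal{L}_u(P)=0$.

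The main obstacle is twofold. First, the index bookkeeping in $\mathcal{L}_u(F)$ must get the transpose right, so that the third-derivative terms combine into $\nabla(\Delta_p u)=0$; this needs the extra smoothness of $u$ off $Z$, or an approximation argument if $u$ is only $C^{2,\alpha}$. Second, the passage from a local to a global conclusion across $Z$ in the equality case needs care. I would handle it by working with the continuous field $V$ and noting that on each connected component of $\Omega\setminus Z$ one has $V=x-z_k$, where $z_k$ must lie in $Z$ or outside. Continuity of $V$ near $Z$, where $V=0$, forces all the $z_k$ to agree with a single point, and hence gives a single center.
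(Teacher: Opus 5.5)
There is a genuine gap in the central computation. Since $\nabla F=2(p-1)|\nabla u|^{p-2}\nabla^2u\,\nabla u$, the flux is
\[
|\nabla u|^{p-2}A_p(\nabla u)\nabla F=2(p-1)|\nabla u|^{2p-4}A_p(\nabla u)\nabla^2u\,\nabla u=2(p-1)\,WV,\qquad V=|\nabla u|^{p-2}\nabla u,
\]
not $2(p-1)W\nabla u$. You dropped a factor $|\nabla u|^{p-2}$; no transpose is involved, the issue is the weight.

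As a result, your formula $\mathcal{L}_u(P)=2(p-1)\big(\mathrm{tr}(W\nabla^2u)-N\big)$ is false. Take the model solution $\nabla u=|x-z|^{\frac{2-p}{p-1}}(x-z)$, for which $W=I_N$. Your formula gives $2(p-1)(\Delta u-N)$ with $\Delta u=|x-z|^{\frac{2-p}{p-1}}\big(N-1+\frac{1}{p-1}\big)$, which is not identically $N$ for $p>2$. This contradicts your own equality case.

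The identities in the Newton step also fail for your $B$. One has $\mathrm{tr}B=|\nabla u|^{-\frac{p-2}{2}}N$, not $N$. Also $\|B\|^2=|\nabla u|^{p-2}\mathrm{tr}(A_p\nabla^2u\,A_p\nabla^2u)$, whereas $\mathrm{tr}(W\nabla^2u)=|\nabla u|^{p-2}\mathrm{tr}\big(A_p(\nabla^2u)^2\big)$. So, as written, the argument does not yield $\mathcal{L}_u(P)\ge0$.

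The repair is short, and then your route is the paper's: differentiate $\Delta_pu=N$ so the third-order terms drop out, then apply Newton. With $W_{ij}=\partial_jV_i$ one gets $\div(WV)=\langle\nabla(\div V),V\rangle+\mathrm{tr}(W^2)=\mathrm{tr}(W^2)$. Hence $\mathcal{L}_u(P)=2(p-1)\big(\mathrm{tr}(W^2)-N\big)$.

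Now take $B:=M^{1/2}\nabla^2u\,M^{1/2}$ with $M=|\nabla u|^{p-2}A_p(\nabla u)$, so the weight is $|\nabla u|^{p-2}$, not $|\nabla u|^{\frac{p-2}{2}}$. This $B$ is symmetric, with $\mathrm{tr}B=\mathrm{tr}W=N$ and $\|B\|^2=\mathrm{tr}(W^2)$, and $B=I_N$ iff $W=I_N$.

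Your instinct to symmetrize is the right one. For $p\neq2$ the matrix $W$ is not symmetric, and $\mathcal{L}_u(|\nabla u|^p)$ equals $p\,\mathrm{tr}(W^2)$. That coincides with the $p\|W\|^2$ written in the paper only when $W$ is symmetric, so Newton's inequality has to be applied to $B$, not to $W$. No approximation is needed either, since $u\in C^\infty(\Omega\setminus Z)$ by bootstrapping the uniformly elliptic equation there.

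For the equality case, your component argument is more careful than the paper's appeal to the fundamental theorem of calculus, but it needs to be made precise. Let $U$ be a component of $\Omega\setminus Z$ with $V=x-z_U$ on $U$. Then $\partial U\cap\Omega\subseteq Z$, and continuity of $V$ gives $\partial U\cap\Omega\subseteq\{z_U\}$. For $N\ge2$ the set $\Omega\setminus\{z_U\}$ is connected, and it is the disjoint union of the open sets $U$ and $\Omega\setminus(\overline U\cup\{z_U\})$. Hence $\Omega\setminus Z=U$, and $V=x-z_U$ on all of $\Omega$ by continuity.
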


    \begin{proof}
    We aim to prove the inequality of the statement away from critical points of $u$. 
    Now let $u$ be a solution to \eqref{intro eq:problem_p}. Define the vector field $a_p \colon \R^N \to \R^N, \xi \mapsto a_p(\xi):= |\xi|^{p-2} \xi$ and notice that 
    \begin{equation*}
        \nabla_\xi a_p(\xi) = |\xi|^{p-2} A_p(\xi),
    \end{equation*}
    where $A_p$ is the $R^{N\times N}$ matrix given by \eqref{eq:matrix_A}.
    Therefore, by differentiating the equation $N = \Delta_p u = \div(a_p(\nabla u))$
    for every $k=1,\dots,N$, and simply interchanging partial derivatives and using chain rule, we get
    \begin{equation}\label{eq:linearized_derivative}
    \begin{split}
        0 = \frac{\partial}{\partial x_k}(\Delta_p u) = \div \left( \frac{\partial}{\partial x_k}(a_p(\nabla u)) \right) &= \div \left( \nabla_\xi a_p (\nabla u) \frac{\partial}{\partial x_k} (\nabla u)\right) 
        \\
        &= \div \left( |\nabla u|^{p-2} A_p(\nabla u) \nabla \left( \frac{\partial u}{\partial x_k} \right) \right)
        =\mathcal{L}_u((\nabla u)_{k}).
    \end{split}
    \end{equation}
    (cf. \eqref{eq: L(phi)_function} for the definition of $\mathcal{L}_u$). In particular, we notice that, intending coloum-wise the divergence of a matrix, 
    \eqref{eq:linearized_derivative} implies
    \begin{equation}\label{eq:linearized_derivative2}
    \div \left( |\nabla u|^{p-2} A_p(\nabla u) \nabla^2 u \right) = 0       
    \end{equation}
    as a vector of $\R^N$.
    
    We now compute, for a general $F: \R^N \to \R$, $F\in C^2(\R^N)$, the quantity $\mathcal{L}_u(F(\nabla u))$ on $\Omega \setminus Z$. By chain rule and by Schwartz Theorem, 
    \[
    \nabla (F(\nabla u)) = \nabla^2 u \nabla_\xi F (\nabla u).
    \]
    Moreover, by the product rule for the divergence operator we know that
    \begin{equation*}
        \div(Bv) = \langle \div(B) , v \rangle + \langle B , \nabla v \rangle_{\mathrm{Fr}} \qquad \forall B \in C^1(\R^N, \R^{N\times N}), v \in C^2(\Omega\setminus Z, \R^N),
    \end{equation*}
    where the $\langle \cdot , \cdot \rangle_{\mathrm{Fr}}$ denoted the standard Frobenious scalar product of matrices of $\R^{N\times N}$. Then, we have 
    \begin{equation*}
    \begin{split}
        \mathcal{L}_u(F(\nabla u)) &= 
        \div \left( |\nabla u|^{p-2} A_p(\nabla u)  \nabla (F(\nabla u)) \right)
        =
        \div \left( |\nabla u|^{p-2} A_p(\nabla u) \nabla^2 u \nabla_\xi F (\nabla u) \right)
        =
        \\
        &=\langle \div(|\nabla u|^{p-2} A_p(\nabla u) \nabla^2 u ) , \nabla_\xi F (\nabla u) \rangle + \langle |\nabla u|^{p-2} A_p(\nabla u) \nabla^2 u, \nabla (\nabla_\xi F (\nabla u) ) \rangle_{\mathrm{Fr}}
        \\
        &= \langle |\nabla u|^{p-2} A_p(\nabla u) \nabla^2 u, \nabla (\nabla_\xi F (\nabla u) ) \rangle_{\mathrm{Fr}},
    \end{split}
    \end{equation*}
    where we have used \eqref{eq:linearized_derivative2}. 
    
    Now, in the particular case $F(\xi)= |\xi|^p$, and using the simple relations
    \begin{equation*}
        \nabla_\xi (|\xi|^p)=p|\xi|^{p-2} \xi,
        \quad\text{and} \quad
        \nabla^2_\xi (|\xi|^p) = p|\xi|^{p-2} A_p(\xi),
    \end{equation*}
    we get
    \begin{equation*}
        \mathcal{L}_u(|\nabla u|^{p}) = p \langle |\nabla u|^{p-2} A_p(\nabla u) \nabla^2 u, |\nabla u|^{p-2} A_p(\nabla u) \nabla^2 u \rangle_{\mathrm{Fr}} = p  \left\||\nabla u|^{p-2}A_p(\nabla u)D^2u \right\|^2.
    \end{equation*}
    Finally, from the definition of $P$ and using the fact that $\mathcal{L}_u(u) = (p-1) \Delta_p u = (p-1) \frac{(\Delta_p u)^2}{N}$ (cf. \eqref{eq: L(u)_u}), we deduce that
    \begin{equation*}
        \mathcal{L}_u(P) = 2(p-1) \left\{ \left\||\nabla u|^{p-2}A_p(\nabla u)D^2u \right\|^2 - \frac{(\Delta_p u)^2}{N} \right\}.
    \end{equation*}
    Therefore, $\mathcal{L}_u(P) \geq 0$ in $\Omega \setminus Z$ follows at once by Newton's inequality, see Proposition \ref{prop newton}, being 
    \[
    \Delta_p u = \mathrm{tr}(A) \quad \text{ with } A:= |\nabla u|^{p-2}A_p(\nabla u)D^2u.
    \]
    Moreover, again by Proposition \ref{prop newton}, 
    \[
    \mathcal{L}_u(P) = 0 \iff |\nabla u|^{p-2}A_p(\nabla u)D^2u = k I_N,
    \]
    for a suitable constant $k \in \R$. Since $\Delta_p u = N$, then $k = 1$. Hence
    \begin{equation}\label{eq I_N = grad}
        I_N = |\nabla u|^{p-2}A_p(\nabla u)D^2u = \nabla ( a_p(\nabla u) ) = \nabla ( |\nabla u|^{p-2} \nabla u). 
    \end{equation}
    Let $z \in \Omega$ be a global minimum point of $u$. Such a point exists by the Weierstrass theorem, since $u$ attains its global minimum on the compact set $\overline{\Omega}$. Moreover, $z \notin \partial\Omega$. Indeed, if $z \in \partial\Omega$, then the sign of the normal derivative $u_\nu(z)$, dictated by the fact that $z$ is a minimum point, would contradict the Robin boundary condition (using the sign of $u$ on $\partial\Omega$). In particular, $z \in Z$. Therefore, an application of the Fundamental Theorem of Calculus to \eqref{eq I_N = grad} implies
    \[
    |\nabla u(x)|^{p-2} \nabla u(x) = x-z \quad \text{for } x \in \Omega \setminus Z.
    \]
    Using the fact that $u \in C^{1,\gamma}(\overline{\Omega})$ we deduce that
    \[
    \nabla u(x) = |x-z|^{\frac{1}{p-1}} \frac{x-z}{|x-z|} \quad \text{for } x \in \overline{\Omega},
    \]
    which implies that $u$ is radially symmetric on the open bounded connected subset $\Omega$, i.e. there exists a constant $c \in R$ such that
    \[
    u(x) = \frac{p-1}{p} |x-z|^{\frac{p}{p-1}} + c \quad \text{for } x \in \overline{\Omega},
    \]
    and the proof is complete.
    \end{proof}

    Then we proceed with a straightforward lemma on the positivity of the sum of two boundary integrals involving $\nabla_\top u$, under a natural condition on the minimum of the curvatures $\kappa_{\min}$ (cf.\, \eqref{intro k min}) and on $\beta$.
    \begin{lemma}\label{lemma positivity int II + int alpha(p)}
        Let $\Omega$ be an open bounded connected subset of $\R^N$ with $C^{2,\alpha}$ boundary, for $\alpha \in (0,1)$. Let $u \in C^{1,\gamma}(\overline{\Omega})$ be the solution to \eqref{intro eq:problem_p}. Let $\alpha(p)$ be any fixed positive constant depending on $p$. Assume that
        \begin{equation}\label{eq: kmin+alpha beta}
            \kappa_{\min} + \alpha(p) \beta^{\frac{1}{p-1}} \geq 0.
        \end{equation}
        Then,
        \begin{equation}\label{eq: A(u,nabla u,alpha(p),beta)}
            \int_{\partial\Omega} |\nabla u|^{2p-4} \mathrm{I\!I}(\nabla_\top u,\nabla_\top u) \d{}\mathcal{H}^{N-1} + \alpha(p) \beta \int_{\partial\Omega} |\nabla u|^{p-2}|u|^{p-2}|\nabla_\top u|^2 \d{}\mathcal{H}^{N-1} \geq 0
        \end{equation}
    \end{lemma}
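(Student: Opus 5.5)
Our plan is a pointwise estimate on $\partial\Omega$: we show that the sum of the two integrands is nonnegative at every boundary point, and then integrate. The key is to use the Robin condition to rewrite the weight $\beta|u|^{p-2}$ in terms of $|\nabla u|$, so that both integrands carry the common factor $|\nabla u|^{2p-4}|\nabla_\top u|^2$, up to a positive multiplicative factor.

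The first term is handled by the spectral bound on the second fundamental form. In the principal frame $\{\theta_i\}$ used in the proof of Lemma \ref{lem:tangential}, we have $\mathrm{I\!I}(\nabla_\top u,\nabla_\top u)=\sum_i \kappa_i\langle\nabla_\top u,\theta_i\rangle^2\ge \kappa_{\min}|\nabla_\top u|^2$. Hence the first integrand is at least $\kappa_{\min}|\nabla u|^{2p-4}|\nabla_\top u|^2$.

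For the second term we use \eqref{eq: u^p-2}, which holds because $u<0$ and $u_\nu>0$ on $\partial\Omega$ by Proposition \ref{prop properties u} and the Robin condition. It gives $\beta|u|^{p-2}=\beta^{\frac{1}{p-1}}(|\nabla u|^{p-2}u_\nu)^{\frac{p-2}{p-1}}$. Setting $t:=u_\nu/|\nabla u|\in(0,1]$, this becomes
\[
\beta|\nabla u|^{p-2}|u|^{p-2}=\beta^{\frac{1}{p-1}}\,|\nabla u|^{2p-4}\,t^{\frac{p-2}{p-1}}\,|\nabla u|^{-\frac{p-2}{p-1}}.
\]
So the combined integrand equals
\[
|\nabla u|^{2p-4}|\nabla_\top u|^2\Big(\kappa_{\min}+\alpha(p)\beta^{\frac{1}{p-1}}\,t^{\frac{p-2}{p-1}}|\nabla u|^{-\frac{p-2}{p-1}}\Big),
\]
up to the inequality from the previous paragraph.

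The main obstacle is that the bracket is not obviously nonnegative, since the extra factor $t^{\frac{p-2}{p-1}}|\nabla u|^{-\frac{p-2}{p-1}}$ need not be at least $1$. We handle this with two cases.
\begin{enumerate}
\item If $\kappa_{\min}\ge 0$, both terms in the bracket are nonnegative and nothing remains to prove.
\item If $\kappa_{\min}<0$, the hypothesis \eqref{eq: kmin+alpha beta} forces $\alpha(p)\beta^{\frac{1}{p-1}}\ge|\kappa_{\min}|$. It then suffices to know that the extra factor is at least $1$, equivalently $u_\nu\ge|\nabla u|^2$ on $\partial\Omega$, i.e. $t\ge|\nabla u|$.
\end{enumerate}
This condition is not automatic. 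If the factors do not close directly, we would first absorb them differently: estimate $|\nabla_\top u|^2\le|\nabla u|^2$ and compare the homogeneities of $|\nabla u|$ in the two terms using the Robin relation $|\nabla u|^{p-2}u_\nu=\beta|u|^{p-1}$. The exponent $\tfrac{1}{p-1}$ on $\beta$ in \eqref{eq: kmin+alpha beta} is exactly the scaling under which the ratio of the two integrands becomes dimensionless. For $p=2$ the factor is identically $1$ and the argument closes at once.

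Once the pointwise nonnegativity is established, integrating over $\partial\Omega$ gives \eqref{eq: A(u,nabla u,alpha(p),beta)}.
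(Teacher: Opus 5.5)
You set up the right pointwise framework (it is also what the paper does), but the proposal does not prove the lemma in the only nontrivial case, $\kappa_{\min}<0$ with $p>2$. An algebra slip hides why. From the Robin condition, $|\nabla u|^{p-2}u_\nu=t|\nabla u|^{p-1}$. Hence
$\beta|u|^{p-2}=\beta^{\frac{1}{p-1}}\bigl(t|\nabla u|^{p-1}\bigr)^{\frac{p-2}{p-1}}=\beta^{\frac{1}{p-1}}t^{\frac{p-2}{p-1}}|\nabla u|^{p-2}$.
The factor $|\nabla u|^{-\frac{p-2}{p-1}}$ in your display is spurious; the correct ratio is dimensionless, as your own scaling remark suggests. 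The integrand is therefore bounded below by $|\nabla u|^{2p-4}|\nabla_\top u|^2\bigl(\kappa_{\min}+\alpha(p)\beta^{\frac{1}{p-1}}t^{\frac{p-2}{p-1}}\bigr)$.

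Since $t\in(0,1]$ with $t<1$ exactly where $\nabla_\top u\neq0$, the claim ``the extra factor is at least $1$'' that your case (2) needs fails at every point where the integrand is nonzero. Concretely, suppose the hypothesis holds with equality and $\nabla_\top u$ is a principal direction of curvature $\kappa_{\min}$ at some point. Then the integrand there equals $|\kappa_{\min}|\,|\nabla u|^{2p-4}|\nabla_\top u|^2\bigl(t^{\frac{p-2}{p-1}}-1\bigr)<0$. Even under a strict hypothesis it is negative wherever $t$ is small enough. Your fallback (estimating $|\nabla_\top u|^2\le|\nabla u|^2$ and comparing homogeneities) cannot repair a bracket that is already scale-free. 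As written, you have a proof only for $\kappa_{\min}\ge0$, where the lemma is trivial, or for $p=2$.

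The paper's proof does not supply the missing idea either. It uses $u_\nu\le|\nabla u|$ and the Robin condition to get $|\nabla u|\ge\beta^{\frac{1}{p-1}}|u|$. It then replaces one factor $|\nabla u|^{p-2}$ in $\kappa_{\min}|\nabla u|^{2p-4}|\nabla_\top u|^2$ by $\beta^{\frac{p-2}{p-1}}|u|^{p-2}$ and factors out $\kappa_{\min}+\alpha(p)\beta^{\frac{1}{p-1}}$. That replacement lowers the term only when $\kappa_{\min}\ge0$; for $\kappa_{\min}<0$ it goes the wrong way, which is exactly the obstruction you ran into. To handle $\kappa_{\min}<0$ with $p>2$, one needs one of two things:
\begin{itemize}
\item a stronger hypothesis that absorbs the factor $t^{\frac{p-2}{p-1}}$, for instance $\kappa_{\min}+\alpha(p)\beta^{\frac{1}{p-1}}\bigl(\min_{\partial\Omega}u_\nu/|\nabla u|\bigr)^{\frac{p-2}{p-1}}\ge0$;
\item a genuinely non-pointwise argument.
\end{itemize}
Neither is in your proposal.
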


    \begin{proof}
    Let $ \mathcal{A}(u,\nabla u, \alpha(p),\beta)$ be the left-hand side of \eqref{eq: A(u,nabla u,alpha(p),beta)}. Then, it suffices to notice that, by the definition of $\kappa_{\min}$ and by Robin boundary conditions (along with $|\nabla u|\geq u_\nu$ on $\partial\Omega$ and $u < 0$ on $\overline{\Omega}$), one has
    \begin{equation*}
        \kappa_{\min} |\nabla_\top u|^2 \leq \mathrm{I\!I}(\nabla_\top u,\nabla_\top u) \quad \text{and} \quad |\nabla u| \geq \beta^{\frac{1}{p-1}} |u| \quad \text{on } \partial\Omega.
    \end{equation*}
    Hence, we conclude that 
    \begin{equation*}
    \begin{split}
        \mathcal{A}(u,\nabla u, \alpha(p),\beta) &\geq \int_{\partial\Omega} |\nabla u|^{p-2} \beta^{\frac{p-2}{p-1}} |u|^{p-2} \kappa_{\min} |\nabla_\top u|^2  \d{}\mathcal{H}^{N-1} + \alpha(p)\beta \int_{\partial\Omega} |\nabla u|^{p-2}|u|^{p-2}|\nabla_\top u|^2 \d{}\mathcal{H}^{N-1}
        \\
        &\geq \left( \kappa_{\min} + \alpha(p) \beta^{\frac{1}{p-1}} \right) \beta^{\frac{p-2}{p-1}} \int_{\partial\Omega} |\nabla u|^{p-2}|u|^{p-2}|\nabla_\top u|^2
        \d{}\mathcal{H}^{N-1} \geq 0,
    \end{split}
    \end{equation*}
    thanks to the assumption \eqref{eq: kmin+alpha beta}. Thus, \eqref{eq: A(u,nabla u,alpha(p),beta)} holds.
    \end{proof}

    \begin{rem}\label{remark curvatures}
        For a open bounded domain $\Omega$ of class $C^{1,1}$, it is well known that interior and exterior ball conditions are satisfied. In particular, it holds that 
        \begin{equation*}
        \kappa_{\min} |\nabla_\top u|^2 \leq \mathrm{I\!I}(\nabla_\top u,\nabla_\top u) \leq \kappa_{\max} |\nabla_\top u|^2  \quad \text{on } \partial\Omega,
    \end{equation*}
    where
    \[
    \kappa_{\min} = \min_{\partial \Omega} \min_{i=1,\dots,N-1}  \kappa_i(x) \quad \text{and} \quad \kappa_{\max} = \max_{\partial \Omega} \max_{i=1,\dots,N-1}  \kappa_i(x).
    \]
    Moreover one has
    \[
    -\frac{1}{r_e} \leq  \kappa_{\min} \leq \kappa_{\max} \leq \frac{1}{r_i}
    \]
    where $r_i$ and $r_e$ are the radii of the uniform interior and exterior sphere conditions (see \cite[Rmk. 2.2]{mmp}).
    \end{rem}

    Now we provide a lemma which follows by an standard application of Jensen inequality to a suitable power of the solution $u$. 

    \begin{lemma}\label{lem:jensen ineq}
        Let $\Omega$ be an open bounded connected subset of $\R^N$ with $C^{2,\alpha}$ boundary, for $\alpha \in (0,1)$. Let $u \in C^{1,\gamma}(\overline{\Omega})$ be the solution to \eqref{intro eq:problem_p}. Then
        \begin{equation*}
            \int_{\partial\Omega} (|\nabla u|^{p-2} u_\nu)^{\frac{p-2}{p-1}} \d{}\mathcal{H}^{N-1} \leq R^{\frac{p-2}{p-1}} |\partial\Omega|.
        \end{equation*}
    \end{lemma}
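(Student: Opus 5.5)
The plan is to apply Jensen's inequality to the concave power $t\mapsto t^{\frac{p-2}{p-1}}$, using as probability measure the normalized surface measure $\frac{1}{|\partial\Omega|}\mathcal{H}^{N-1}\llcorner\partial\Omega$, and to combine it with the flux identity \eqref{eq: boundary int of u_nu = N Omega}.

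First we check that the integrand is well defined and nonnegative. By Proposition \ref{prop properties u}, $u<0$ on $\partial\Omega$. The Robin condition then gives $|\nabla u|^{p-2}u_\nu=-\beta|u|^{p-2}u=\beta|u|^{p-1}>0$ on $\partial\Omega$. Hence $g:=|\nabla u|^{p-2}u_\nu$ is a positive continuous function on $\partial\Omega$, and $g^{\frac{p-2}{p-1}}$ is defined pointwise (this is the same relation as \eqref{eq: u^p-2}).

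Next, since $p\ge 2$, the exponent $s:=\frac{p-2}{p-1}$ lies in $[0,1)$. For $s=0$ (that is, $p=2$) the statement is trivial, with equality $|\partial\Omega|=|\partial\Omega|$. For $s\in(0,1)$ the map $\phi(t)=t^{s}$ is concave on $(0,\infty)$, so Jensen's inequality gives
\[
\frac{1}{|\partial\Omega|}\int_{\partial\Omega} g^{s}\,\d{}\mathcal{H}^{N-1}\le \left(\frac{1}{|\partial\Omega|}\int_{\partial\Omega} g\,\d{}\mathcal{H}^{N-1}\right)^{s}.
\]
Identity \eqref{eq: boundary int of u_nu = N Omega} says $\int_{\partial\Omega}g\,\d{}\mathcal{H}^{N-1}=N|\Omega|=R|\partial\Omega|$. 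The right-hand side is therefore $R^{s}$, and multiplying by $|\partial\Omega|$ gives the claim.

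The argument is routine. The only point needing care is the positivity of $g$, which comes from the sign of $u$ and the Robin condition, so that the fractional power and the concavity are legitimate. If desired, one can add that for $p>2$ equality holds if and only if $g$ is constant on $\partial\Omega$, by strict concavity.
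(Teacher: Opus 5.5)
Your proof is correct and is essentially the paper's argument. The paper applies Jensen with the convex power $t\mapsto t^{\frac{p-1}{p-2}}$ to $|u|^{p-2}$ on $\partial\Omega$ and then converts using $\beta^{\frac{p-2}{p-1}}|u|^{p-2}=(|\nabla u|^{p-2}u_\nu)^{\frac{p-2}{p-1}}$. That is the same inequality as your concave Jensen applied directly to $g=\beta|u|^{p-1}$, combined with the same flux identity $\int_{\partial\Omega} g\,\d{}\mathcal{H}^{N-1}=R|\partial\Omega|$. Your formulation also covers $p=2$ explicitly, where the paper's exponent $\frac{p-1}{p-2}$ is undefined.
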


    \begin{proof}
    We proceed by applying the Jensen inequality to the function $f= |u|^{p-2}$ with $q = \frac{p-1}{p-2} > 1$. This implies that
    \begin{equation*}
        \left(\frac{1}{|\partial\Omega|} \int_{\partial\Omega} |u|^{p-2} \d{}\mathcal{H}^{N-1}  \right)^{\frac{p-1}{p-2}} \leq \frac{1}{|\partial\Omega|} \int_{\partial\Omega} |u|^{p-1} \d{}\mathcal{H}^{N-1} = - \frac{1}{|\partial\Omega|} \int_{\partial\Omega} |u|^{p-2}u \d{}\mathcal{H}^{N-1} = \frac{R}{\beta},
    \end{equation*}
    where in the last two equality we have used the sign of $u$, the Robin boundary condition and \eqref{eq: boundary int of u_nu = N Omega}. Moreover, raising the above inequality to the power $\frac{p-2}{p-1}$ we deduce that 
    \[
    \int_{\partial\Omega} |u|^{p-2} \d{}\mathcal{H}^{N-1} \leq \left( \frac{R}{\beta}\right)^{\frac{p-2}{p-1}} |\partial\Omega|,
    \]
    and by the condition $\beta^{\frac{p-2}{p-1}} |u|^{p-2} = (|\nabla u|^{p-2} u_\nu)^{\frac{p-2}{p-1}}$ on $\partial\Omega$ (cf. \eqref{eq: u^p-2}), we conclude. 
    \end{proof}

    We now prove a central lemma providing an inequality which will be used in the sequel to prove again the positivity of certain boundary integrals in the identity \eqref{eq: fundamental identity for serrin} of Theorem \ref{teo: identity serrin}. In particular, we underline that, when $p=2$, the lemma is trivially true and the aforementioned boundary integral are null. The following holds.
    
    \begin{lemma}\label{lem:rho ineq}
        Let $\rho \in [0,1]$, $p \geq 2$. Then, it holds that
        \begin{equation}\label{ineq: 1-rho^alpha < 12(1-rho^2)}
            1-\rho^{\frac{p-2}{p-1}} \leq \frac{1}{2} \frac{p-2}{p-1} \rho^{-\frac{p}{p-1}} (1-\rho^2),
        \end{equation}
        with the convention that the right-hand side of \eqref{ineq: 1-rho^alpha < 12(1-rho^2)} reduces to  $+\infty$ for $\rho=0$.
    \end{lemma}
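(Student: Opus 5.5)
Set $a=\frac{p-2}{p-1}\in[0,1)$, so that $\frac{p}{p-1}=2-a$ and $\frac12\frac{p-2}{p-1}=\frac a2$. The inequality \eqref{ineq: 1-rho^alpha < 12(1-rho^2)} then reads
\[
1-\rho^{a}\le \frac a2\,\rho^{a-2}(1-\rho^2),\qquad \rho\in(0,1].
\]
The case $p=2$ ($a=0$) is trivial: both sides vanish. The case $\rho=0$ holds by the stated convention. So we assume $a\in(0,1)$ and $\rho\in(0,1]$. Multiplying by $\rho^{2-a}>0$, the claim is equivalent to
\[
g(\rho):=\frac a2(1-\rho^2)-\rho^{2-a}+\rho^2\ \ge 0 \quad\text{on }(0,1].
\]

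We will show that $g$ is nonincreasing and vanishes at $\rho=1$. Indeed $g(1)=0$. Differentiating,
\[
g'(\rho)=-a\rho-(2-a)\rho^{1-a}+2\rho=(2-a)\bigl(\rho-\rho^{1-a}\bigr).
\]
Since $\rho\le 1$ and $1-a\in(0,1)$, we have $\rho^{1-a}\ge\rho$, hence $g'\le 0$ on $(0,1]$. Therefore $g(\rho)\ge g(1)=0$, which proves the inequality. Equality holds only at $\rho=1$ when $p>2$.

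An alternative viewpoint is to substitute $t=\rho^2$; the inequality then becomes a tangent-line (concavity) estimate for the power function $t\mapsto t^{a/2}$. The derivative computation above is cleaner, however.

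The only potentially delicate point is keeping track of the exponents after multiplying through by $\rho^{2-a}$, so that $g'$ factorizes exactly. Once the substitution $a=\frac{p-2}{p-1}$ is made, this factorization is immediate, and nothing further is needed.
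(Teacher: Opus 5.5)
Your proof is correct and is essentially the paper's argument. The paper also reduces to $\rho^{2-a}-\rho^2\le\frac a2(1-\rho^2)$ with $a=\frac{p-2}{p-1}$, and obtains it from the tangent-line inequality at $s=1$ for the concave map $s\mapsto s^{1-a/2}$, evaluated at $s=\rho^2$; your $g(\rho)$ is exactly the gap $1+(1-\frac a2)(s-1)-s^{1-a/2}$ at $s=\rho^2$, so your sign check of $g'$ is a direct proof of that same concavity estimate (and your side remark about $t\mapsto t^{a/2}$ is also right, provided the tangent is taken at $t$ and evaluated at $1$, i.e.\ $1\le t^{a/2}+\frac a2 t^{a/2-1}(1-t)$, which is precisely the unmultiplied form of the inequality).
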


    \begin{proof}
        For $\rho=0$ there is nothing to prove, being the convention in the statement in force, while for $\rho=1$ the inequality trivially holds.
        Hence, it suffices to prove \eqref{ineq: 1-rho^alpha < 12(1-rho^2)} for $\rho \in (0,1)$. Notice that, $p\geq 2$ implies $\frac{p-2}{p-1} \in [0,1)$. Moreover,
        \begin{equation*}
            \frac{p}{p-1} = 2-\frac{p-2}{p-1}.
        \end{equation*}
        Consider the function $g: (0,+\infty) \longrightarrow \R, s \mapsto g(s) := s^{1- \frac{1}{2}\frac{p-2}{p-1}}$. The map $g$ is concave since $1- \frac{1}{2}\frac{p-2}{p-1} \in (\frac{1}{2},1]$. Hence, by the tangent property of concave functions at the point $s=1$, we have
        \begin{equation*}
            s^{1- \frac{1}{2}\frac{p-2}{p-1}} \leq 1 + \left(1- \frac{1}{2}\frac{p-2}{p-1} \right)(s-1) \quad \forall s \in (0,+\infty).
        \end{equation*}
        Simplifying and adding $s$ to both sides, we get
        \begin{equation*}
            s^{\frac{1}{2}\frac{p-2}{p-1}} - s \leq 1 + \frac{1}{2}\frac{p-2}{p-1}(s-1) - s = \frac{1}{2}\frac{p}{p-1}(1-s) \quad \forall s \in (0,+\infty).
        \end{equation*}
        In particular, considering simply $s = \rho^2$ for $\rho \in (0,1)$, we obtain
        \begin{equation*}
            \rho^{2 - \frac{p-2}{p-1}} - \rho^2 \leq \frac{1}{2}\frac{p-2}{p-1}(1-\rho^2) \quad \forall \rho \in (0,1),
        \end{equation*}
        and multiplying by $\rho^{-\frac{p}{p-1}}$ yields \eqref{ineq: 1-rho^alpha < 12(1-rho^2)}.
    \end{proof}

    We prove a first rigidity result in the spirit of \cite[Thm.\,8]{gamo26} which we deduce directly from the Fundamental Identity of Theorem \ref{teo:fundamental}.

    \begin{teo}\label{teo: first rigidity result}
        Let $\Omega$ be an open bounded connected subset of $\R^N$ with $C^{2,\alpha}$ boundary, for $\alpha \in (0,1)$. Let $u \in C^{1,\gamma}(\overline{\Omega})$ be the solution to \eqref{intro eq:problem_p}.
        Assume that
        \begin{itemize}
            \item[(i)]  $\kappa_{\min} + 2(p-1)\beta^{\frac{1}{p-1}} > 0$;
 
            \item[(ii)] $\displaystyle \int_{\partial\Omega} \Big(|\nabla u|^{p-2} u_\nu \mathrm{M}_{\partial\Omega} -1 \Big) |\nabla u|^{p-2} u_\nu \d{}\mathcal{H}^{N-1} \geq 0$. 
        \end{itemize}
        Then $\Omega$ is a ball and $u$ is radially symmetric.
    \end{teo}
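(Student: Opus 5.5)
The plan is to start from the Fundamental Identity of Theorem \ref{teo:fundamental} and move its right-hand side over. Factoring out $(N-1)$, the right-hand side equals
\[
-(N-1)\int_{\partial\Omega}\Big(|\nabla u|^{p-2}u_\nu \mathrm{M}_{\partial\Omega}-1\Big)|\nabla u|^{p-2}u_\nu \,\d{}\mathcal{H}^{N-1},
\]
which by assumption (ii) is $\le 0$. Hence the sum of the three terms on the left-hand side of \eqref{eq: fundamental identity} is $\le 0$.

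Next we show that each of these terms is nonnegative.

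\textbf{The interior term.} Since $Z$ has measure zero, Lemma \ref{lemma positivity L(P)} gives $\int_\Omega \mathcal{L}_u(P)\,\d x\ge 0$.

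\textbf{The two boundary terms.} We apply Lemma \ref{lemma positivity int II + int alpha(p)} with $\alpha(p)=2(p-1)$, which is allowed since (i) gives \eqref{eq: kmin+alpha beta}. More precisely, the proof of that lemma yields the lower bound
\[
\Big(\kappa_{\min}+2(p-1)\beta^{\frac{1}{p-1}}\Big)\beta^{\frac{p-2}{p-1}}\int_{\partial\Omega}|\nabla u|^{p-2}|u|^{p-2}|\nabla_\top u|^2\,\d{}\mathcal{H}^{N-1}.
\]

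All three terms are therefore nonnegative while their sum is $\le 0$, so each vanishes. In particular $\mathcal{L}_u(P)=0$ a.e. on $\Omega\setminus Z$, and by continuity on all of $\Omega\setminus Z$.

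The rigidity statement in Lemma \ref{lemma positivity L(P)} then gives $\nabla u(x)=|x-z|^{\frac{1}{p-1}}\frac{x-z}{|x-z|}$ and $u(x)=\frac{p-1}{p}|x-z|^{\frac{p}{p-1}}+c$. The strict inequality in (i) is useful as a consistency check. Since the prefactor is positive (recall $|u|>0$ and $|\nabla u|>0$ on $\partial\Omega$), vanishing of the lower bound forces $\nabla_\top u\equiv 0$ on $\partial\Omega$. So $u$ is constant on each connected component of $\partial\Omega$.

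It remains to deduce that $\Omega$ is a ball, and this is the step I expect to be most delicate. On $\partial\Omega$ we have $u=\frac{p-1}{p}|x-z|^{p/(p-1)}+c$. Since $u$ is constant on each component of $\partial\Omega$, every such component lies in a sphere centred at $z$; being a closed hypersurface, it is that whole sphere.

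To exclude several components, we argue as follows. $\Omega$ is connected and bounded by concentric spheres, so if there were more than one, $\Omega$ would be an annulus $\{r_1<|x-z|<r_2\}$. Then $z\notin\overline\Omega$, contradicting the fact from the proof of Lemma \ref{lemma positivity L(P)} that the minimum point $z$ lies in $\Omega$. Alternatively, on the inner sphere $u_\nu<0$, which contradicts the Robin condition with $u<0$. Hence $\Omega=B_r(z)$, $u$ is radial, and $r=R$ follows from $R=N|\Omega|/|\partial\Omega|$.
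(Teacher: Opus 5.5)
Your route is the same as the paper's. By (ii) the right-hand side of the Fundamental Identity equals $-(N-1)\int_{\partial\Omega}\big(|\nabla u|^{p-2}u_\nu\mathrm{M}_{\partial\Omega}-1\big)|\nabla u|^{p-2}u_\nu\,\d{}\mathcal{H}^{N-1}\le 0$. Lemmas \ref{lemma positivity L(P)} and \ref{lemma positivity int II + int alpha(p)} (with $\alpha(p)=2(p-1)$) make the left-hand side nonnegative. The rigidity part of Lemma \ref{lemma positivity L(P)} together with $\nabla_\top u\equiv0$ then concludes. Your final geometric step is more complete than the paper's, which simply asserts the conclusion: each component of $\partial\Omega$ is a whole sphere centred at $z$, and an annulus is excluded because $z\in\Omega$, or because $u_\nu<0$ on an inner sphere contradicts the Robin condition.

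The genuine problem is the lower bound you import from the proof of Lemma \ref{lemma positivity int II + int alpha(p)}. It fails in exactly the regime where (i) is a real restriction, namely $\kappa_{\min}<0$ and $p>2$. If $\kappa_{\min}\ge0$, then (i) is automatic since $\beta>0$, and the bound is correct; for $p=2$ it is also correct.

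To see this, put $\rho=u_\nu/|\nabla u|\in(0,1]$ and $s=\frac{p-2}{p-1}$. The Robin condition $\beta|u|^{p-1}=|\nabla u|^{p-2}u_\nu$ gives $\beta^{s}|u|^{p-2}=|\nabla u|^{p-2}\rho^{s}$. After cancelling the $\beta$-terms (using $2(p-1)\beta=2(p-1)\beta^{\frac1{p-1}}\beta^{s}$), the bound you quote is equivalent to
\[
\int_{\partial\Omega}|\nabla u|^{2p-4}\Big(\mathrm{I\!I}(\nabla_\top u,\nabla_\top u)-\kappa_{\min}\rho^{s}|\nabla_\top u|^2\Big)\d{}\mathcal{H}^{N-1}\ge0 .
\]
Pointwise you only know that the bracket is $\ge\kappa_{\min}(1-\rho^{s})|\nabla_\top u|^2$. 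When $\kappa_{\min}<0$ and $p>2$, this is negative wherever $\nabla_\top u\neq0$ (there $\rho<1$), so no sign follows.

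In the paper's proof of that lemma, the faulty step replaces one factor $|\nabla u|^{p-2}$ by the smaller $\beta^{s}|u|^{p-2}$ in front of $\kappa_{\min}|\nabla_\top u|^2$. That is a valid lower bound only if $\kappa_{\min}\ge0$.

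Put differently, with $e=\nabla_\top u/|\nabla_\top u|$, the two boundary integrands add up to $|\nabla u|^{2p-4}|\nabla_\top u|^2\big(\mathrm{I\!I}(e,e)+2(p-1)\beta^{\frac1{p-1}}\rho^{s}\big)$. What is really needed is therefore $\kappa_{\min}+2(p-1)\beta^{\frac1{p-1}}\rho^{s}>0$ on $\partial\Omega$. Hypothesis (i) does not provide this, because $\rho$ is not bounded below.

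Hence, for $\kappa_{\min}<0$ and $p>2$, neither $\int_\Omega\mathcal{L}_u(P)\,\d{x}=0$ nor $\nabla_\top u\equiv0$ follows. The gap is inherited from the paper. Your proof is complete when $\kappa_{\min}\ge0$ or $p=2$. Otherwise it needs an extra ingredient, for example a lower bound on $u_\nu/|\nabla u|$ along $\partial\Omega$, with (i) strengthened accordingly.
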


    \begin{proof}
        Lemma \ref{lemma positivity L(P)} and Lemma \ref{lemma positivity int II + int alpha(p)} for $\alpha(p)=2(p-1)$ (cf.\, assumption $(i)$) yield
        \begin{equation*}
            \frac{1}{2(p-1)}\int_\Omega \mathcal{L}_u(P) \d{x}+\int_{\partial\Omega} |\nabla u|^{2p-4} \mathrm{I\!I}(\nabla_\top u,\nabla_\top u) \d{}\mathcal{H}^{N-1}+ 2(p-1) \beta\int_{\partial\Omega} |\nabla u|^{p-2}|u|^{p-2}|\nabla_\top u|^2 \d{}\mathcal{H}^{N-1} \geq 0.
        \end{equation*} 
        On the other hand, 
        \begin{equation*}
        \begin{split}
        (N-1) \int_{\partial\Omega} |\nabla u|^{p-2} u_\nu \d{}\mathcal{H}^{N-1}
        &- (N-1) \int_{\partial\Omega} |\nabla u|^{2p-4} (u_\nu)^2 \mathrm{M}_{\partial\Omega} \d{}\mathcal{H}^{N-1}
        \\
        &=
        - (N-1) \int_{\partial\Omega} \Big(|\nabla u|^{p-2} u_\nu \mathrm{M}_{\partial\Omega} -1 \Big) |\nabla u|^{p-2} u_\nu \d{}\mathcal{H}^{N-1}
        \leq 0,
        \end{split}
    \end{equation*}
    by the assumption $(ii)$. Thus, 
    \begin{equation*}
        \mathcal{L}_u(P) = 0 \quad\text{in } \Omega, \quad \nabla_\top u = 0 \text{ and } |\nabla u|^{p-2} u_\nu = R  \quad \text{on } \partial\Omega,
    \end{equation*}
    and the conclusion follows.
    \end{proof}

    \begin{rem}
        We notice that, a trivial corollary of previous theorem can be obtained assuming the point-wise inequality
        \[
        |\nabla u|^{p-2} u_\nu \mathrm{M}_{\partial\Omega} \geq 1 \quad \text{on } \partial\Omega, 
        \]
        instead of assumption $(ii)$. In particular, the assumption given by the equality case in the above relation has been considered in literature for problems under Dirichlet boundary conditions: see, for instance, \cite[Thm.\,2.2]{mapo19}, \cite[Thm.\,1.1]{CoFe20} and \cite[Thm.\,4]{RuHuCh24} for the analogue result for $p$-Laplacian on manifolds.
    \end{rem}

    Before proving our main result, we describe how the identity of Theorem \ref{teo: identity soap} provides an alternative proof for smooth domains of the celebrated Alexandrov's Theorem, using a suitable $p$-Laplacian Robin boundary value problem (see \cite[Thm.\,2.2]{mapo19}, \cite[Thm.\,7]{gamo26}, \cite[Thm.\,3.1]{CoFe20}).
    
    \begin{teo}[Alexandrov’s Theorem]\label{thm alexandrov}
        Let $\Omega$ be an open bounded connected subset of $\R^N$ with $C^{2,\alpha}$ boundary, for $\alpha \in (0,1)$. If the mean curvature $\mathrm{M}_{\partial\Omega}$ satisfies 
        \begin{equation}\label{eq: cond M > M0}
            \mathrm{M}_{\partial\Omega} \geq \mathrm{M}_{0} \quad \text{on } \partial\Omega, 
        \end{equation}
        where $\mathrm{M}_0 = \frac{1}{R}$, then $\Omega$ is a ball of radius $R$. In particular, the same conclusion holds if $\mathrm{M}_{\partial\Omega}$ equals a constant on $\partial \Omega$.
    \end{teo}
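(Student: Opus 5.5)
The plan is to take $u$ as the solution of \eqref{eq:problem_p} given by Proposition \ref{prop properties u}, and to feed the hypothesis $\mathrm{M}_{\partial\Omega} \geq \mathrm{M}_0$ into the identity of Theorem \ref{teo: identity soap}. The right-hand side of that identity is $-(N-1)\int_{\partial\Omega}|\nabla u|^{2p-4}(u_\nu)^2(\mathrm{M}_{\partial\Omega}-\mathrm{M}_0)\,\d{}\mathcal{H}^{N-1}$, which is nonpositive under \eqref{eq: cond M > M0}. On the left-hand side we have four terms.
\begin{itemize}
\item The first term is nonnegative by Lemma \ref{lemma positivity L(P)}; here one uses that $Z$ has measure zero, so integrating over $\Omega\setminus Z$ is legitimate.
\item The term $(N-1)\mathrm{M}_0\int_{\partial\Omega}(|\nabla u|^{p-2}u_\nu-R)^2$ is trivially nonnegative, since $\mathrm{M}_0=1/R>0$.
\item The two tangential terms, $\int|\nabla u|^{2p-4}\mathrm{I\!I}(\nabla_\top u,\nabla_\top u)+2(p-1)\beta\int|\nabla u|^{p-2}|u|^{p-2}|\nabla_\top u|^2$, need more care.
\end{itemize}

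The main obstacle is that no curvature assumption like \eqref{eq: kmin+alpha beta} is given, so Lemma \ref{lemma positivity int II + int alpha(p)} is not directly available. The plan is to bypass this as follows. Since $\beta>0$ is a free parameter of the auxiliary Robin problem (the theorem involves only $\Omega$), I choose $\beta$ large enough that $\kappa_{\min}+2(p-1)\beta^{\frac{1}{p-1}}\ge 0$. This is possible because $\kappa_{\min}$ is finite for a $C^{2,\alpha}$ bounded domain (Remark \ref{remark curvatures}: $\kappa_{\min}\ge -1/r_e$). Then Lemma \ref{lemma positivity int II + int alpha(p)} with $\alpha(p)=2(p-1)$ makes the tangential pair nonnegative.

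With every left-hand term nonnegative and the right-hand side nonpositive, all of them must vanish. In particular:
\begin{itemize}
\item $\int_\Omega\mathcal{L}_u(P)=0$. Combined with $\mathcal{L}_u(P)\ge0$ pointwise on $\Omega\setminus Z$ and its continuity there, this gives $\mathcal{L}_u(P)\equiv 0$ a.e.
\item $|\nabla u|^{p-2}u_\nu\equiv R$ on $\partial\Omega$.
\end{itemize}

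By the equality case of Lemma \ref{lemma positivity L(P)}, $\nabla u(x)=|x-z|^{\frac{1}{p-1}}\frac{x-z}{|x-z|}$, so $u=\frac{p-1}{p}|x-z|^{\frac{p}{p-1}}+c$. The Robin condition together with $|\nabla u|^{p-2}u_\nu=R$ gives $u\equiv -(R/\beta)^{\frac{1}{p-1}}$ on $\partial\Omega$. Hence $\partial\Omega$ lies in a level set of $|x-z|$, which is a sphere centered at $z$. Since $\Omega$ is bounded, connected and has $\partial\Omega$ contained in one sphere, $\Omega$ is the corresponding ball. Its radius is then forced to equal $R=N|\Omega|/|\partial\Omega|$, because that ratio equals the radius for any ball.

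For the final claim, suppose $\mathrm{M}_{\partial\Omega}\equiv \mathrm{M}$ is constant. The Minkowski formula $\int_{\partial\Omega}\mathrm{M}_{\partial\Omega}\langle x,\nu\rangle=|\partial\Omega|$ together with $\int_{\partial\Omega}\langle x,\nu\rangle=N|\Omega|$ gives $\mathrm{M}=1/R=\mathrm{M}_0$. If one prefers not to invoke Minkowski, there is a variant of the main argument. If $\mathrm{M}_{\partial\Omega}\equiv \mathrm{M}$, the right-hand side of \eqref{eq: fundamental identity} equals $(N-1)\int(|\nabla u|^{p-2}u_\nu - \mathrm{M}(|\nabla u|^{p-2}u_\nu)^2)$. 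Cauchy--Schwarz, $\int(|\nabla u|^{p-2}u_\nu)^2\ge R^2|\partial\Omega|$, bounds this by $(N-1)R|\partial\Omega|(1-\mathrm{M}R)$. The left-hand side is nonnegative, so $\mathrm{M}\le 1/R$. The reverse inequality needs the Minkowski argument, or the existence of a point where all $\kappa_i\ge 1/\mathrm{diam}$, together with a similar estimate. I will rely on Minkowski, which is the cleaner route, to reduce to \eqref{eq: cond M > M0}.
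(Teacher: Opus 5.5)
Your proposal is correct and follows the paper's proof essentially step for step. Both arguments choose $\beta$ large so that $\kappa_{\min}+2(p-1)\beta^{\frac{1}{p-1}}$ is nonnegative, then combine Lemma \ref{lemma positivity L(P)} and Lemma \ref{lemma positivity int II + int alpha(p)} (with $\alpha(p)=2(p-1)$) in the identity of Theorem \ref{teo: identity soap} against the nonpositive right-hand side, and reduce the constant mean curvature case to $\mathrm{M}_{\partial\Omega}=\mathrm{M}_0$ via Minkowski's identity. Your extra step — using the Robin condition and $|\nabla u|^{p-2}u_\nu=R$ to see that $u$, hence $|x-z|$, is constant on $\partial\Omega$ — only makes explicit what the paper summarizes as ``the conclusion follows''; your Cauchy--Schwarz variant is not needed.
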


    \begin{proof}
    The claim follows from the identity of Theorem \ref{teo: identity soap}. In fact, since $\Omega$ is of class $C^{2,\alpha}$ (hence it has curvatures uniformly bounded from above and from below, see Remark \ref{remark curvatures}), we can fix a parameter $\beta > 0$ such that
    \begin{equation}\label{eq: assumption k + 2(p-1) beta}
    \kappa_{\min} + 2(p-1)\beta^{\frac{1}{p-1}} > 0.
    \end{equation}
    Then, consider the solution $u \in C^{1,\gamma}(\overline{\Omega}) $ of \eqref{eq:problem_p}. By Lemma \ref{lemma positivity L(P)} and Lemma \ref{lemma positivity int II + int alpha(p)} for $\alpha(p)=2(p-1)$ (cf.\, \eqref{eq: assumption k + 2(p-1) beta}), we know that
    \begin{equation*}
    \begin{aligned}
    \frac{1}{2(p-1)}\int_\Omega \mathcal{L}_u(P) \d{x} 
    +\int_{\partial\Omega} |\nabla u|^{2p-4} \mathrm{I\!I}(\nabla_\top u,\nabla_\top u) \d{}\mathcal{H}^{N-1}
    &+ 2(p-1)\beta \int_{\partial\Omega} |\nabla u|^{p-2}|u|^{p-2}|\nabla_\top u|^2 \d{}\mathcal{H}^{N-1} 
    \\
    &+ (N-1) \int_{\partial\Omega} \Big(|\nabla u|^{p-2} u_\nu - R \Big)^2  \mathrm{M}_{0} \d{}\mathcal{H}^{N-1} 
    \geq 0
    \end{aligned}
    \end{equation*}
    while, by assumption \eqref{eq: cond M > M0}, clearly
    \begin{equation*}
        - (N-1) \int_{\partial\Omega} |\nabla u|^{2p-4} (u_\nu)^2 (\mathrm{M}_{\partial\Omega} - \mathrm{M}_{0} ) \d{}\mathcal{H}^{N-1}  \leq 0
    \end{equation*}
    The conclusion follows as in the last part of the proof of Theorem \ref{teo: first rigidity result}. 

    In particular, if $\mathrm{M}_{\partial\Omega}$ is equal to some constant on $\partial\Omega$, then Minkowski's identity 
    \begin{equation*}
        \int_{\partial\Omega} \mathrm{M}_{\partial\Omega} \langle x-z, \nu \rangle \d{}\mathcal{H}^{N-1} = |\partial\Omega| 
    \end{equation*}
    implies $\mathrm{M}_{\partial\Omega} = \mathrm{M}_{0}$ and we conclude.
\end{proof}

Finally we present a Serrin's type Theorem for $p$-Laplacian with Robin boundary condition under a natural integral condition coming from identity \eqref{eq: fundamental identity for serrin}.

\begin{teo}\label{thm rigidity serrin integral}
    Let $\Omega$ be an open bounded connected subset of $\R^N$ with $C^{2,\alpha}$ boundary, for $\alpha \in (0,1)$. Let $u \in C^{1,\gamma}(\overline{\Omega})$ be the solution to \eqref{intro eq:problem_p}. Let 
    \begin{equation*}
        R = \frac{N|\Omega|}{|\partial\Omega|} \quad \text{and} \quad  C_0 = (1-p)R^{\frac{p}{p-1}} - \left( \frac{R}{\beta} \right)^{\frac{1}{p-1}} \big( (p-1)N + 1 \big). 
    \end{equation*}
    Assume that 
    \begin{itemize}
            \item[(i)]  $\kappa_{\min} +  \frac{3p-4}{2} \beta^{\frac{1}{p-1}} > 0$;
    
            \item[(ii)] $\displaystyle \int_{\partial\Omega} \Big( |\nabla u|^p + pNu - p |\nabla u|^{p-2} (u_\nu)^2 + \beta(N-1) |u|^p \mathrm{M}_{\partial\Omega}  - C_0 \Big) |u|^{p-2} \d{}\mathcal{H}^{N-1} \geq 0$.
    \end{itemize}
    Then $\Omega$ is a ball and $u$ is radially symmetric.
\end{teo}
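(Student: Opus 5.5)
The plan is to start from the identity \eqref{eq: fundamental identity for serrin} of Theorem \ref{teo: identity serrin}. We show that each of the six terms on its left-hand side is nonnegative, while assumption (ii) makes the right-hand side nonpositive (recall $\beta>0$). Hence every term on the left vanishes, and rigidity is read off from the vanishing.

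The terms are handled one at a time.

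\emph{First term.} By Lemma \ref{lemma positivity L(P)}, $\mathcal{L}_u(P)\ge 0$ in $\Omega\setminus Z$. Since $|Z|=0$, the integral is nonnegative.

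\emph{Second and third terms.} Apply Lemma \ref{lemma positivity int II + int alpha(p)} with $\alpha(p)=\frac{3p-4}{2}>0$, which is allowed by assumption (i). The proof of that lemma actually gives the lower bound
\[
\Big(\kappa_{\min}+\tfrac{3p-4}{2}\beta^{\frac1{p-1}}\Big)\beta^{\frac{p-2}{p-1}}\int_{\partial\Omega}|\nabla u|^{p-2}|u|^{p-2}|\nabla_\top u|^2 .
\]

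\emph{Fourth term.} It is a square, so it is nonnegative.

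\emph{Fifth term.} Set $\rho=u_\nu/|\nabla u|$ on $\partial\Omega$. By the Robin condition and $u<0$ we have $u_\nu>0$, so $\rho\in(0,1]$, and Lemma \ref{lem:rho ineq} says exactly that the bracket is nonnegative pointwise.

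\emph{Sixth term.} Its coefficient $\big((p-1)\beta^{\frac1{p-1}}+(p-1)N+1\big)R^{\frac1{p-1}}$ is positive. The integral is nonnegative by Lemma \ref{lem:jensen ineq}.

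Once everything vanishes, the strict inequality in (i) is what we use. Vanishing of the combined second and third terms, together with the positive factor above and $|\nabla u|,|u|>0$ on $\partial\Omega$ (Proposition \ref{prop properties u}), forces $\nabla_\top u=0$ on $\partial\Omega$. Vanishing of the fourth term gives $|\nabla u|^{p-2}u_\nu=R$ on $\partial\Omega$. Vanishing of the first term, together with continuity of $\mathcal{L}_u(P)$ on $\Omega\setminus Z$, gives $\mathcal{L}_u(P)=0$ there. The equality case of Lemma \ref{lemma positivity L(P)} then yields $\nabla u(x)=|x-z|^{\frac1{p-1}}\frac{x-z}{|x-z|}$ on $\overline\Omega$, so $u=\frac{p-1}{p}|x-z|^{\frac p{p-1}}+c$.

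It remains to show that $\Omega$ is a ball. On $\partial\Omega$ we have $|\nabla u|^{p-2}u_\nu=R$ and $\nabla_\top u=0$, so $|\nabla u|^{p-1}=R$, i.e.\ $|x-z|=R$ for every $x\in\partial\Omega$. Thus $\partial\Omega\subset\partial B_R(z)$. Since $\Omega$ is bounded, connected and open with boundary contained in the sphere, we conclude $\Omega=B_R(z)$. (Alternatively, $u$ is constant on $\partial\Omega$ with nonvanishing gradient, so $\partial\Omega$ is a level set $\{|x-z|=\text{const}\}$.)

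The main obstacle, in my view, is the bookkeeping that the hypotheses of the auxiliary lemmas are met, especially the pointwise range $\rho\in(0,1]$ needed for Lemma \ref{lem:rho ineq}. A second delicate point is justifying that the vanishing of $\int_\Omega\mathcal{L}_u(P)$ gives $\mathcal{L}_u(P)\equiv0$ on the open set $\Omega\setminus Z$, so that the fundamental-theorem-of-calculus argument in Lemma \ref{lemma positivity L(P)} applies. This uses $|Z|=0$, continuity of $\mathcal L_u(P)$ away from $Z$, and the argument that the minimum point $z$ of $u$ lies in $Z$, as already done there.
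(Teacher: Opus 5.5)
Your argument follows the paper's proof term by term. You use identity \eqref{eq: fundamental identity for serrin}, then Lemmas \ref{lemma positivity L(P)}, \ref{lemma positivity int II + int alpha(p)} with $\alpha(p)=\frac{3p-4}{2}$, \ref{lem:rho ineq} and \ref{lem:jensen ineq}, and finally the equality case. Your extra bookkeeping is fine: $\rho\in(0,1]$, and $|x-z|=R$ on $\partial\Omega$ forcing $\Omega=B_R(z)$.

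\textbf{The gap.} The step you take over from Lemma \ref{lemma positivity int II + int alpha(p)} fails exactly where hypothesis (i) has content.
\begin{itemize}
\item Since $p\ge 2$ and $\beta>0$, (i) holds automatically when $\kappa_{\min}\ge 0$. It only restricts anything when $\kappa_{\min}<0$.
\item The lower bound you quote comes from writing $|\nabla u|^{2p-4}=|\nabla u|^{p-2}|\nabla u|^{p-2}$. One factor is then replaced by the smaller quantity $\beta^{\frac{p-2}{p-1}}|u|^{p-2}$ in front of $\kappa_{\min}|\nabla_\top u|^2$. For $\kappa_{\min}<0$ and $p>2$ that inequality points the wrong way.
\item Exactly, the Robin condition gives $\beta^{\frac{p-2}{p-1}}|u|^{p-2}=\rho^{\frac{p-2}{p-1}}|\nabla u|^{p-2}$ with $\rho=u_\nu/|\nabla u|$. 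So the second plus third term has integrand
\[
|\nabla u|^{2p-4}\Big(\mathrm{I\!I}(\nabla_\top u,\nabla_\top u)+\tfrac{3p-4}{2}\,\beta^{\frac{1}{p-1}}\rho^{\frac{p-2}{p-1}}\,|\nabla_\top u|^2\Big).
\]
\item Here $\rho<1$ wherever $\nabla_\top u\neq 0$, and nothing in (i) bounds $\rho$ from below. Take a point where $\mathrm{I\!I}$ is negative in the direction of $\nabla_\top u$ and $\rho$ is small enough. There the integrand is negative.
\end{itemize}
So neither the nonnegativity of these two terms nor the strictly positive factor you use to deduce $\nabla_\top u=0$ follows.

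\textbf{Scope of the argument.} The paper's proof invokes the same lemma, so this gap is shared with it. As it stands, the argument establishes the statement only for $p=2$, or for $\kappa_{\min}\ge 0$, which is the convex setting of Corollary \ref{cor rigidity serrin integral}. To reach nonconvex domains with $p>2$ you would need one of two things. The first is a hypothesis that sees $\rho$, for instance
\[
\kappa_{\min}+\tfrac{3p-4}{2}\,\beta^{\frac{1}{p-1}}\min_{\partial\Omega}\Big(\frac{u_\nu}{|\nabla u|}\Big)^{\frac{p-2}{p-1}}>0 .
\]
This gives a uniform positive factor in front of $|\nabla u|^{2p-4}|\nabla_\top u|^2$, but it is no longer a condition on $\Omega$ and $\beta$ alone. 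The second is a genuinely new way to absorb the deficit into the other terms of the identity. The remaining steps of your proposal (fourth, fifth and sixth terms, and the equality case) are as in the paper.
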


\begin{proof}
    The strategy of the proof is based on the fundamental identity \eqref{eq: fundamental identity for serrin} provided by Theorem \ref{teo: identity serrin}. First we notice that, by Lemma \ref{lemma positivity L(P)} and by Lemma \ref{lemma positivity int II + int alpha(p)} for $\displaystyle \alpha(p)= \frac{3p-4}{2}$ (cf.\, assumption $(i)$), we know that  
    \begin{equation}\label{eq: 1 lhs positive}
    \begin{aligned}
    &\frac{1}{2(p-1)}\int_\Omega \mathcal{L}_u(P) \d{x} +\int_{\partial\Omega} |\nabla u|^{2p-4} \mathrm{I\!I}(\nabla_\top u,\nabla_\top u) \d{}\mathcal{H}^{N-1} 
    \\
    &+ \frac{3p-4}{2} \beta \int_{\partial\Omega} |\nabla u|^{p-2}|u|^{p-2}|\nabla_\top u|^2 \d{}\mathcal{H}^{N-1} 
    + (p-1) \beta^{\frac{1}{p-1}} \int_{\partial\Omega} \Big(|\nabla u|^{p-2} u_\nu - R \Big)^2  \d{}\mathcal{H}^{N-1}  \geq 0,
    \end{aligned}
    \end{equation}
    Moreover, a simple application of Lemma \ref{lem:jensen ineq} yields 
    \begin{equation}\label{eq: 2 lhs positive}
        \left( (p-1) \beta^{\frac{1}{p-1}} + (p-1)N + 1 \right) R^{\frac{1}{p-1}}  \int_{\partial\Omega} \Big( R^{\frac{p-2}{p-1}} - (|\nabla u|^{p-2} u_\nu)^{\frac{p-2}{p-1}} \Big)  \d{}\mathcal{H}^{N-1} \geq 0
    \end{equation}
    Furthermore, Lemma \ref{lem:rho ineq} for $\displaystyle \rho = \frac{u_\nu}{|\nabla u|} \in (0,1]$ implies 
    \begin{equation}\label{eq: 3 lhs positive}
        (p-1) \beta^{\frac{1}{p-1}}  \int_{\partial\Omega}  \left( \frac{1}{2} \frac{p-2}{p-1} \left(\frac{u_\nu}{|\nabla u|}\right)^{-\frac{p}{p-1}} \left(1- \left(\frac{u_\nu}{|\nabla u|}\right)^2 \right) - \left(1- \left(\frac{u_\nu}{|\nabla u|}\right)^{\frac{p-2}{p-1}} \right) \right) (|\nabla u|^{p-2} u_\nu)^2 \d{}\mathcal{H}^{N-1} \geq 0.
    \end{equation}
    Hence, \eqref{eq: 1 lhs positive}, \eqref{eq: 2 lhs positive} and \eqref{eq: 3 lhs positive} yield the non-negativity of the left-hand side of \eqref{eq: fundamental identity for serrin}. Then, by assumption $(ii)$ we deduce that every integral in \eqref{eq: fundamental identity for serrin} is null, hence
    \begin{equation*}
        \mathcal{L}_u(P) = 0 \quad\text{in } \Omega, \quad \nabla_\top u = 0 \text{ and } |\nabla u|^{p-2} u_\nu = R  \quad \text{on } \partial\Omega,
    \end{equation*}
    and the conclusion follows by Lemma \ref{lemma positivity L(P)}.
\end{proof}
    
Assumption $(ii)$ in the statement of Theorem \ref{thm rigidity serrin integral} can be replaced by
\begin{equation*}
    |\nabla u|^p + pNu - p |\nabla u|^{p-2} (u_\nu)^2 + \beta(N-1) |u|^p \mathrm{M}_{\partial\Omega}  \geq C_0 \quad \text{on } \partial\Omega,
\end{equation*}
which is clearly a stronger assumption. In particular, as a corollary we get our main result.

\begin{proof}[Proof of Theorem \ref{intro main thm rigidity serrin}]
    Since 
    \[
    |\nabla u|^p + pNu - p |\nabla u|^{p-2} (u_\nu)^2 + \beta(N-1) |u|^p \mathrm{M}_{\partial\Omega} = C\quad \text{on } \partial\Omega,
    \]
    for a constant $C \geq C_0$, then assumption $(ii)$ of Theorem \ref{thm rigidity serrin integral} holds and the conclusion follows.
\end{proof}

Finally we just state a straightforward corollary of Theorem \ref{thm rigidity serrin integral} valid for convex domains where clearly $\kappa_{\min} > 0$. Along with the condition $\beta>0$, this implies the validity of assumption $(i)$ of Theorem \ref{thm rigidity serrin integral}. Hence, we get the following.

\begin{cor}\label{cor rigidity serrin integral}
     Let $\Omega$ be an open bounded connected subset of $\R^N$ with $C^{2,\alpha}$ boundary, for $\alpha \in (0,1)$. Let $u \in C^{1,\gamma}(\overline{\Omega})$ be the solution to \eqref{intro eq:problem_p}. Let 
    \begin{equation*}
        R = \frac{N|\Omega|}{|\partial\Omega|} \quad \text{and} \quad  C_0 = (1-p)R^{\frac{p}{p-1}} - \left( \frac{R}{\beta} \right)^{\frac{1}{p-1}} \big( (p-1)N + 1 \big). 
    \end{equation*}
    Assume that 
    \[
    \displaystyle \int_{\partial\Omega} \Big( |\nabla u|^p + pNu - p |\nabla u|^{p-2} (u_\nu)^2 + \beta(N-1) |u|^p \mathrm{M}_{\partial\Omega}  - C_0 \Big) |u|^{p-2} \d{}\mathcal{H}^{N-1} \geq 0.
    \]
    Then $\Omega$ is a ball and $u$ is radially symmetric.
\end{cor}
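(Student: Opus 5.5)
The plan is to obtain the statement as a direct specialization of Theorem \ref{thm rigidity serrin integral}. Its assumption $(ii)$ is, word for word, the integral hypothesis of the corollary. So the whole argument reduces to checking assumption $(i)$, namely $\kappa_{\min} + \frac{3p-4}{2}\beta^{\frac{1}{p-1}} > 0$. Once $(i)$ is available, I would simply rerun the proof of Theorem \ref{thm rigidity serrin integral}:
\begin{itemize}
\item by Lemma \ref{lemma positivity L(P)} and Lemma \ref{lemma positivity int II + int alpha(p)} with $\alpha(p)=\frac{3p-4}{2}$, the first four terms on the left-hand side of \eqref{eq: fundamental identity for serrin} are nonnegative;
\item by Lemma \ref{lem:jensen ineq}, the term with $R^{\frac{p-2}{p-1}} - (|\nabla u|^{p-2}u_\nu)^{\frac{p-2}{p-1}}$ is nonnegative;
\item by Lemma \ref{lem:rho ineq} with $\rho = u_\nu/|\nabla u| \in (0,1]$, the remaining term is nonnegative;
\item the right-hand side of \eqref{eq: fundamental identity for serrin} is $\le 0$ by the integral hypothesis.
\end{itemize}
Hence every term vanishes. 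In particular $\mathcal{L}_u(P)\equiv 0$ in $\Omega\setminus Z$, $\nabla_\top u = 0$ and $|\nabla u|^{p-2}u_\nu = R$ on $\partial\Omega$. The equality case of Lemma \ref{lemma positivity L(P)} then gives $u(x) = \frac{p-1}{p}|x-z|^{\frac{p}{p-1}} + c$, so $u$ is radial. Since $\partial\Omega$ is a level set of $u$ (because $\nabla_\top u=0$ and $\partial\Omega$ is connected, or directly from $|\nabla u| = R^{1/(p-1)}$ on $\partial\Omega$ with $\nabla u$ radial), $\Omega$ is a ball, of radius $R$.

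The only real issue is therefore $(i)$. As the paper itself states, the corollary is formulated in the setting of the preceding remark, where the domains under consideration are convex. There $\kappa_i \ge 0$ at every point, so $\kappa_{\min}\ge 0$. Since $\beta>0$ and $p\ge 2$ give $\frac{3p-4}{2}\beta^{\frac{1}{p-1}}>0$, assumption $(i)$ holds strictly with no further work.

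If one insists on reading the statement literally, without convexity, $(i)$ can genuinely fail: $\kappa_{\min}$ can be as negative as $-1/r_e$ by Remark \ref{remark curvatures}, while $\beta$ is fixed. This is the main obstacle. What I would try first is to avoid the pointwise lower bound $\mathrm{I\!I}(\nabla_\top u,\nabla_\top u)\ge \kappa_{\min}|\nabla_\top u|^2$ and instead control $\int_{\partial\Omega}|\nabla u|^{2p-4}\mathrm{I\!I}(\nabla_\top u,\nabla_\top u)$ by the nonnegative quantities already present. These are $\int_\Omega \mathcal{L}_u(P)$ and the boundary deficit $\int_{\partial\Omega}(|\nabla u|^{p-2}u_\nu - R)^2$. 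The tool would be Lemma \ref{lem:tangential}, which expresses $\mathrm{I\!I}(\nabla_\top u,\nabla_\top u)$ through Hessian terms of $u$ on $\partial\Omega$. I expect this absorption not to close in general, so the honest route is the one above: invoke Theorem \ref{thm rigidity serrin integral} with $(i)$ supplied by $\kappa_{\min}\ge 0$.
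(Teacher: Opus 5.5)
Your proposal is correct and follows the paper's own route: the corollary is obtained from Theorem \ref{thm rigidity serrin integral} by observing that for convex $\Omega$ one has $\kappa_{\min}\ge 0$, so assumption $(i)$ holds because $\beta>0$. You are also right that convexity is genuinely used; the paper states it in the sentence preceding the corollary but omits it from the displayed statement.

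Two small refinements in your write-up are welcome. First, you note that $\kappa_{\min}\ge 0$ already suffices, whereas the paper writes $\kappa_{\min}>0$. Second, you give an explicit argument that $\partial\Omega\subset\partial B_R(z)$, using $|\nabla u|=R^{1/(p-1)}$ on $\partial\Omega$.
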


\appendix
\section{Proof of Theorem \ref{thm regularity C2}}\label{sec: appendix reg}

We start by recalling some notation on local charts of smooth bounded subsets of $\R^N$ in order to state a global regularity theorem from \cite{Lieberman12} on solutions of elliptic equations with oblique boundary  conditions.

Let $\Omega$ be an open bounded connected $C^{2,\alpha}$ subset of $\R^N$, for some $\alpha\in(0,1)$ and let $\tilde{x} = (\tilde{x}',\tilde{x}_N) \in\partial\Omega$. After a rigid motion, there exist $r_0>0$ and a function $\omega\in C^{2,\alpha}(B_{r_0}(\tilde{x}'))$, where $B_{r_0}(\tilde{x}')\subset\R^{N-1}$, such that locally
\[
    \Omega = \{x=(x',x_N) \colon x_N>\omega(x')\}.
\]
For $0<r\le r_0$ we define
\begin{equation*}
\begin{aligned}
\Omega[r]&:=\{x=(x',x_N)\colon x_N > \omega(x'),\ |x'|<r\},\\
\Sigma[r]&:=\{x=(x',x_N)\colon x_N = \omega(x'),\ |x'|<r\},\\
\sigma[r]&:=\partial\Omega[r]\setminus\Sigma[r].
\end{aligned}
\end{equation*}
Here $\Sigma[r]$ is the real boundary portion and $\sigma[r]$ is the artificial boundary. On $\Omega[r]$ we define
\begin{equation}\label{eq d and delta}
    d(x):= \mathrm{dist}(x,\partial \Omega[r]),\qquad \delta(x):= \min \{d(x), \mathrm{diam}\,\Omega[r]\}.
\end{equation}
For $b\in\R$, $k\in \{0,1,2\}$, and $0<\theta<1$, we introduce the weighted H\"older seminorms and norms
\[
\begin{aligned}
    |f|_0^{(b)}&=\sup_{x\in\Omega[r]}\delta(x)^b|f(x)|,\\
    [f]_{\theta}^{(b)}&=
    \sup_{\substack{x\ne y\in\Omega[r]\\ |x-y|<d(x)/2}}
    \delta(x)^{b+\theta}\frac{|f(x)-f(y)|}{|x-y|^{\theta}},\\
    |f|_{k,\theta}^{(b)}&=\sum_{|j|=0}^k |\nabla^j f|_0^{(b+|j|)}+[\nabla^k f]_{\theta}^{(b+k)}.
\end{aligned}    
\]
defined for a function $f \in C^k(\overline{\Omega[r]})$, and $j$ a multindex. The corresponding weighted H\"older space is given by
\[
    C_{(b)}^{k,\theta}(\overline{\Omega[r]})=\left\{ f \in C^k(\overline{\Omega[r]}) \colon |f|_{k,\theta}^{(b)}<+\infty\right\}.
\]
We now recall the following global oblique regularity estimate result, see \cite[Proposition 11.21]{Lieberman12}.

\begin{teo}[Global oblique regularity estimate]\label{lieberman-teor}
Let $r \in (0,1)$.  Assume that
\[
    a^{ij}\in C_{(0)}^{0,\alpha}(\overline{\Omega[r]}),\qquad a^0\in C_{(2)}^{0,\alpha}(\overline{\Omega[r]}),
\]
and that $b\in C^{1,\alpha}(\Sigma[r]\times B_K(0))$ for some $K>0$.  Suppose that the matrix $a=(a^{ij})_{i,j=1}^N$ is uniformly elliptic, namely its eigenvalues belong to $[\lambda,\mu\lambda]$ for some $\lambda>0$ and $\mu\ge1$, and that there exist constants $A_1,B_1,\chi>0$ such that
\begin{align}
    |a^{ij}|_{\alpha}^{(0)}+|a^0|_{\alpha}^{(2)}&\le A_1\lambda,
    \label{teor-L1}\\
    \langle \nabla_q b(x,q), \bar{\nu}(x) \rangle &\ge \chi,
    \label{teor-L2}\\
    |\nabla b(x,q)-\nabla b(y,t)|
    &\le B_1\left(|q-t|^{\alpha}+d^\ast(x)^{-1-\alpha}|x-y|^{\alpha}\right)
    \label{teor-L3}
\end{align}
for $x,y\in\Sigma[r]$, $q,t\in B_K(0)$, and $|x-y|<d^\ast(x)/2$, where $d^\ast(x):= \mathrm{dist}(x,\sigma[r])$.  Then, every solution $u \in C^1(\overline{\Omega[r]}) \cap C^2(\Omega[r])$ of
\begin{equation*}
\begin{cases}
    \mathrm{tr} ( a \nabla^2 u) + a^0 =0 & \text{in }\Omega[r],\\
    b(\cdot , \nabla u)=0 & \text{on }\Sigma[r],
\end{cases}
\end{equation*}
is of class $C^{2,\alpha}$ up to the real boundary in the smaller chart $\Omega[r/2]$, i.e.,
\[
    u\in C^{2,\alpha}\left(\overline{\Omega[r/2]}\right).
\]
\end{teo}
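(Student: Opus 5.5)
This is a cited result (Lieberman, Proposition 11.21), so rather than reproving it in full I would reconstruct it from the standard Schauder machinery for oblique derivative problems. The plan is to flatten the boundary, freeze coefficients, and apply a perturbation argument in weighted Hölder norms.

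\emph{Step 1: flattening.} First I straighten $\Sigma[r]$ with the $C^{2,\alpha}$ map $(x',x_N)\mapsto(x',x_N-\omega(x'))$. This preserves the structure:
\begin{itemize}
\item the transformed operator is still uniformly elliptic, with $C^{0,\alpha}_{(0)}$ coefficients;
\item the boundary condition keeps the form $\tilde b(y,\nabla v)=0$;
\item obliqueness \eqref{teor-L2} survives, with $\chi$ changed only by constants depending on $\|\omega\|_{C^{2,\alpha}}$.
\end{itemize}

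\emph{Step 2: linearising the boundary condition.} Since $u\in C^1(\overline{\Omega[r]})$, I set $\gamma^k(x)=\int_0^1\partial_{q_k}b(x,t\nabla u)\,\d t$ and $g(x)=-b(x,0)$. The condition becomes the linear oblique condition $\langle\gamma,\nabla u\rangle=g$.

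The difficulty is that $\gamma$ is a priori only as regular as $\nabla u$, i.e. continuous. To get around this, I first prove that $u\in C^{1,\theta}$ up to $\Sigma[r/2]$ for some $\theta$, using Lieberman's $C^{1,\theta}$ estimates for oblique problems with continuous coefficients (or a Krylov–Safonov-type boundary estimate). Then \eqref{teor-L3} gives $\gamma\in C^{0,\theta}$, and bootstrapping raises this to the needed $C^{1,\alpha}$-type control on $\gamma$ and $g$.

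\emph{Step 3: model estimate and perturbation.} Freeze coefficients at boundary points: constant-coefficient elliptic equation in a half-space with a constant oblique vector. For this model problem the $C^{2,\alpha}$ Schauder estimate follows from explicit Poisson-kernel representations, or from Agmon–Douglis–Nirenberg theory, since obliqueness gives the complementing condition. The key point is to derive the weighted estimate
\[
|u|^{(-1-\alpha)}_{2,\alpha}\le C\bigl(|u|_0+|a^0|^{(2)}_\alpha+|g|_{1,\alpha}\bigr).
\]
Here the weights $\delta$ and $d^\ast$ absorb the artificial boundary $\sigma[r]$. I would combine local estimates near $\Sigma$ with interior estimates through the usual interpolation inequalities on the weighted seminorms, absorbing the small-scale coefficient oscillations via the $A_1$ bound. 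Restricting to $\Omega[r/2]$, where the weights are bounded below, yields $u\in C^{2,\alpha}(\overline{\Omega[r/2]})$.

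\emph{Main obstacle.} I expect the hardest step to be the bootstrap in Step 2, where the nonlinear $b$ has to be handled with only $C^1$ information on $u$. First I would try the difference-quotient method in the tangential directions. Tangential derivatives satisfy an oblique problem with $C^\alpha$ data, which gives $C^{1,\alpha}$ control of $\nabla_{x'}u$. The normal second derivative is then recovered from the equation.
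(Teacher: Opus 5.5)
There is a genuine gap. The paper does not prove this statement; it quotes it from \cite{Lieberman12}, Proposition~11.21. So your sketch has to stand on its own, and it fails exactly where the content lies: passing from $C^{1,\theta}$ to $C^{2,\alpha}$ under a \emph{nonlinear} boundary condition.

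In Step~2 your field $\gamma=\int_0^1\nabla_q b(x,t\nabla u)\,\d{t}$ is a function of $\nabla u$. The $C^{1,\alpha}$ control of $\gamma$ that the linear $C^{2,\alpha}$ theory of Step~3 needs (and on which the constant in your weighted estimate depends) is therefore as strong as the conclusion. Linear oblique theory gains exactly one derivative over $\gamma$ and $g$: from $\gamma\in C^{0,\theta}$ it returns $u\in C^{1,\theta}$ and nothing more, so the bootstrap is circular. Moreover, $\gamma$ is built from $\nabla_q b$, which the hypotheses only make $C^{0,\alpha}$, so $\gamma$ need not be $C^{1,\alpha}$ even when $u\in C^{2,\alpha}$.

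The difference-quotient fallback in your last paragraph does not work here either. The equation is in nondivergence form with $a^{ij},a^0$ merely $C^{0,\alpha}$. The quotient $\Delta_h u=(u(\cdot+he_k)-u)/h$ satisfies $a^{ij}(\cdot+he_k)D_{ij}\Delta_h u=-(\Delta_h a^{ij})D_{ij}u-\Delta_h a^0$. Here $\Delta_h a^{ij},\Delta_h a^0$ can be of size $h^{\alpha-1}$, and $D^2u$ is not even known to be bounded near $\Sigma[r]$. So $\partial_k u$ does not solve an oblique problem with $C^{\alpha}$ data. That device is fine for divergence-form equations with conormal conditions, such as the one for $v=\log(-u)$ in the paper's application, but not for $\mathrm{tr}(a\nabla^2u)+a^0=0$.

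Finally, Step~3 is only an a priori estimate. The absorption presupposes $|u|^{(-1-\alpha)}_{2,\alpha}<\infty$, which is not known for $u\in C^1(\overline{\Omega[r]})\cap C^2(\Omega[r])$.

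What is missing is to linearize $b$ at the solution, not along the ray from $q=0$. Work in a Campanato/Caffarelli-type iteration at $x_0\in\Sigma[r/2]$, with approximating quadratic polynomials $P_\rho$ and $\gamma_0=\nabla_q b(x_0,\nabla u(x_0))$, and write
\[
b(x,\nabla u)=b(x,\nabla P_\rho)+\langle\gamma_0,\nabla(u-P_\rho)\rangle+\langle\hat\gamma-\gamma_0,\nabla(u-P_\rho)\rangle,\qquad \hat\gamma=\int_0^1\nabla_q b\bigl(x,\nabla P_\rho+t\nabla(u-P_\rho)\bigr)\,\d{t}.
\]
The first term is $C^{1,\alpha}$ in $x$, since $\nabla P_\rho$ is affine. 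The second is a constant-coefficient oblique operator. By \eqref{teor-L3} the small oscillation $|\hat\gamma-\gamma_0|$ multiplies the \emph{excess} $\nabla(u-P_\rho)$, so it can be absorbed. In your linearization the same oscillation multiplies $\nabla u$ itself, which is of order one.

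Freeze $a^{ij}$ at $x_0$ as well and compare with solutions $h$ of the frozen half-space problem, where your model estimate applies. Then $a^{ij}D_{ij}(u-h)=-(a^{ij}-a^{ij}(x_0))D_{ij}h-(a^0-a^0(x_0))$, which involves $D^2h$ and never $D^2u$. With an ABP-type bound this yields regularity directly, without presupposing $u\in C^{2,\alpha}$.

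Alternatively, a correct a priori estimate (obtained this way) plus existence and uniqueness also works. Uniqueness is where your ray linearization belongs: for two solutions, $w=u-v$ satisfies the oblique condition $\bigl\langle\int_0^1\nabla_q b(x,\nabla v+t\nabla w)\,\d{t},\nabla w\bigr\rangle=0$, and the maximum principle applies.

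A smaller point: Step~3 tacitly reads the weights as degenerating only at $\sigma[r]$. With the literal $d(x)=\mathrm{dist}(x,\partial\Omega[r])$, the hypotheses give no H\"older control of $a^{ij},a^0$ at $\Sigma[r]$, and the conclusion fails. A counterexample: flat $\Sigma[r]$, $a=I_N$, $b(x,q)=q_N$, $u=U(x_N)$ with $U'(0)=0$ and $U''(t)=-1/\log(1/t)$ near $t=0$, and $a^0=-U''(x_N)$. So that reading must be made explicit.
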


We are now in the position to prove Theorem \ref{thm regularity C2}.

\begin{proof}[Proof of Theorem \ref{thm regularity C2}]
Set $\tilde{\alpha}:= \min\{ \alpha, \gamma\}$. By Proposition \ref{prop properties u}, $u \in C^{1,\tilde{\alpha}}(\overline\Omega)$ and $u<0$ in $\overline\Omega$. Hence, the function $v=\log(-u)$ is well defined and belongs to $C^{1,\tilde{\alpha}}(\overline\Omega)$. Moreover,
\[
    u=-e^v,\quad\nabla u=-e^v\nabla v,\quad|\nabla u|^{p-2}\nabla u=-e^{(p-1)v}|\nabla v|^{p-2}\nabla v.
\]
Consequently,
\begin{equation*}
    \Delta_p u =\div\left(-e^{(p-1)v}|\nabla v|^{p-2}\nabla v\right)=-e^{(p-1)v}\left(\Delta_p v+(p-1)|\nabla v|^p\right).
\end{equation*}
Since $\Delta_p u=N$, we obtain
\begin{equation}\label{equation-v}
    \Delta_p v+(p-1)|\nabla v|^p+N e^{-(p-1)v}=0\quad\text{in }\Omega.
\end{equation}
The Robin condition becomes $0=-e^{(p-1)v}\left(|\nabla v|^{p-2}v_\nu+\beta\right)$ on $\partial\Omega$.
Since $\bar{\nu}=-\nu$, this is equivalent to
\begin{equation}\label{boundary-v}
    |\nabla v|^{p-2}\langle\nabla v,\bar{\nu} \rangle-\beta=0\quad\text{on }\partial\Omega.
\end{equation}
By Proposition \ref{prop properties u}, $\nabla u \neq 0$ on $\partial\Omega$. Furthermore, since $u\in C^{1,\tilde{\alpha}}(\overline\Omega)$, there exist $\rho_0>0$ and constants $m,M>0$ such that
\begin{equation}\label{nablav-bounds}
    0<m<|\nabla v|<M\quad\text{in }\overline{\Omega_{\rho_0}}.
\end{equation}
Fix $\tilde{x}\in\partial\Omega$ and choose a boundary local chart $\Omega[r]$, with $r \in (0,1)$ sufficiently small so that $\overline{\Omega[r]} \subset \overline{\Omega_{\rho_0}}$ and $\partial \Omega[r] \cap \partial \Omega_{\rho_0} = \Sigma[r]$.
In $\Omega[r]$, equation \eqref{equation-v} can be written in nondivergence form. For $q\in\R^N$ with $m<|q|<M$, set (cf. \eqref{eq:matrix_A})
\begin{align*}
    \tilde{a}(q)&= |q|^{p-2} A_p(q) = |q|^{p-2} I_N +(p-2)|q|^{p-4} q \otimes q,\\
    \tilde{a}^0(z,q)&=(p-1)|q|^p+N e^{-(p-1)z}.
\end{align*}
Thus \eqref{equation-v} in $\Omega[r]$ is equivalent to
\begin{equation}\label{eq-nondiv}
    \mathrm{tr} (a \nabla^2 v) + a^0=0 \quad\text{in }\Omega[r],
\end{equation}
where the matrix $a$ and the function $a^0$ are given by
\begin{align*}
    a(x)&=\tilde{a}(\nabla v(x)) = |\nabla v(x)|^{p-2} I_N + (p-2) |\nabla v(x)|^{p-4} \nabla v(x) \otimes \nabla v(x) \quad\text{for all }x \in\Omega[r],\\
    a^0(x)&= \tilde{a}^0(v(x),\nabla v(x))= (p-1)|\nabla v(x)|^p + N e^{-(p-1)v(x)} \quad\text{for all }x \in\Omega[r].
\end{align*}
The matrix $a=(a^{ij})_{i,j=1}^N$ is uniformly elliptic in $\Omega[r]$. Indeed, for every $\zeta\in\R^N$,
\[
    \langle a(x) \zeta , \zeta \rangle =|\nabla v(x)|^{p-2}|\zeta|^2+(p-2)|\nabla v(x)|^{p-4}\langle \nabla v(x),\zeta\rangle^2 \quad\text{for all }x \in\Omega[r],
\]
and therefore, by \eqref{nablav-bounds} and Cauchy-Schwartz,
\begin{equation*}
    m^{p-2}|\zeta|^2\le \langle a(x) \zeta , \zeta \rangle \le (p-1)M^{p-2}|\zeta|^2 \qquad \text{for all } x\in \Omega[r] \text{ and } \zeta\in\R^N.
\end{equation*}
Thus, one can take
\[
    \lambda=m^{p-2},\qquad\mu=(p-1)\left(\frac{M}{m}\right)^{p-2}.
\]
By \eqref{nablav-bounds} and by $v\in C^{1,\tilde{\alpha}}(\overline\Omega)$, we deduce that the functions $a^{ij}$ and $a^0$ belong to  $C^{0,\tilde{\alpha}}(\overline{\Omega[r]})$ for every $i,j \in \{1,\dots,N\}$. Moreover, recalling the definition of $\delta$ in \eqref{eq d and delta} and using the fact that $\delta(x) \leq \mathrm{diam}\,\Omega[r]$, we have that the zero-th order term satisfies 
\[
    |a^0|_0^{(2)} \leq (\mathrm{diam}\,\Omega[r])^2 \|a^0\|_{C^0(\overline{\Omega[r]})},
\]
and
\[
    [a^0]_{\tilde{\alpha}}^{(2)}\le (\mathrm{diam}\,\Omega[r])^{2+\tilde{\alpha}} \|a^0\|_{C^{0,\tilde{\alpha}}(\overline{\Omega[r]})}.
\]
Therefore, $a^0 \in C_{(2)}^{0,\tilde{\alpha}}(\Omega[r])$. Hence, there exist $A_1$ such that the coefficients bound \eqref{teor-L1} of Theorem \ref{lieberman-teor} holds.

Next, we rewrite the boundary condition \eqref{boundary-v} in a form that is uniformly oblique (cf. \eqref{teor-L2}) for all $q \in B_K(0)$ for a suitable $K \in (0,+\infty)$. Let $K>M$ and choose $\ell\in C^\infty([0,+\infty))$ such that
\begin{equation}\label{eq bound ell}
\begin{aligned}
    \ell(t)&=t^{(p-2)/2} &&\text{for }t\in[m^2/2,4M^2],\\
    \ell(t)&\ge c_\ell>0 &&\text{for }t\in[0,K^2],\\
    \ell'(t)&\ge0 &&\text{for }t\in[0,K^2].  
\end{aligned}    
\end{equation}
Define
\[
    \Phi(q)=\ell(|q|^2)q \quad \text{for } q \in B_K(0),\qquad b(x,q)=\langle \Phi(q),\bar{\nu}(x)\rangle-\beta \quad \text{for } (x,q) \in \Sigma[r] \times B_K(0).
\]
By \eqref{nablav-bounds} and \eqref{eq bound ell}, condition \eqref{boundary-v} is equivalent to
\begin{equation}\label{boundary-cond}
    b(x,\nabla v(x))=0 \quad\text{for all } x \in \Sigma[r].
\end{equation}
Furthermore, chain rule and \eqref{eq bound ell} yield for all $x \in \Sigma[r]$ and $q \in B_K(0)$
\[
    \langle \nabla_q b(x,q) \bar{\nu}(x), \bar{\nu}(x) \rangle
    =\langle \nabla \Phi(q)\bar{\nu}(x),\bar{\nu}(x)\rangle 
    =\ell(|q|^2)+2\ell'(|q|^2)\langle q,\bar{\nu}(x)\rangle^2
    \ge c_\ell.
\]
Hence, we conclude that \eqref{teor-L2} holds for $\chi = c_\ell$. 

We now establish the validity of \eqref{teor-L3} for a suitable constant $C$. Assume to extend $\bar{\nu}$ as in Lemma \ref{lem:tangential} in the computation below. Then,
\[
\nabla b(x,q) = \left(\nabla_x b(x,q) , \nabla_q b(x,q) \right) = \left( \nabla \bar{\nu} (x) \Phi(q) , \nabla \Phi(q) \bar{\nu}(x) \right).
\]
Straightforward algebraic manipulations yield
\begin{equation*}
\begin{split}
    |\nabla_x b(x,q) - \nabla_x b(y,t)| &\leq | \nabla \bar{\nu} (x) -  \nabla \bar{\nu} (y) | |\Phi(q)| + |\nabla \bar{\nu} (y) | |\Phi(q) - \Phi(t)|,
    \\
    |\nabla_q b(x,q) - \nabla_q b(y,t)| &\leq |\nabla \Phi(q) - \nabla \Phi(t)| |\bar{\nu}(x)| + |\nabla \Phi(t)| |\bar{\nu}(x) - \bar{\nu}(y)|,
\end{split}
\end{equation*}
and since $\bar{\nu} \in C^{1,\tilde{\alpha}}(\partial\Omega)$ and $\Phi\in C^\infty(B_K(0), \R^N)$, we conclude that 
\[
    |\nabla b(x,q)- \nabla b(y,t)| \le C\left(|q-t|^{\tilde{\alpha}} + |x-y|^{\tilde{\alpha}} \right) \quad \text{for } x,y \in \Sigma[R],\, |x-y| \le \frac{1}{2} d^\ast(x) \text{ and } q,t \in B_K(0),
\]
where we have used the fact that $|q-t| = |q-t|^{1-\tilde{\alpha}} |q-t|^{\tilde{\alpha}} \leq (2K)^{1-\tilde{\alpha}} |q-t|^{\tilde{\alpha}}$.

After possibly reducing $r$ so that $d^\ast(x) \le \mathrm{diam}\,\Omega[r] \le 1$ (hence $d^\ast(x)^{-1-\tilde{\alpha}} \ge 1$), this implies  that \eqref{teor-L3} holds. Therefore, Theorem \ref{lieberman-teor}, applied to problem \eqref{eq-nondiv}-\eqref{boundary-cond}, implies
\[
    v\in C^{2,\tilde{\alpha}}(\overline{\Omega[r/2]}).
\]

Finally, since $\partial\Omega$ is compact, a standard covering argument yields the conclusion that $v$ has the desired regularity in the collar $\Omega_\varepsilon$ for a suitable $\varepsilon>0$. Using $u=-e^v$, the validity of the statement follows. 
\end{proof}

\section{Shape derivative and overdetermined condition}\label{sec: appendix shape}
In this section, we compute the overdetermined condition as stationary condition of the following energy functional 
\begin{equation}\label{appendix:energy}
    \mathcal{F}_p(u)=\frac{1}{p}\int_\Omega|\nabla u|^p\d{x}+\frac{\beta}{p}\int_{\partial\Omega}u^p\d{}\mathcal{H}^{n-1}+N\int_\Omega u\d{x}.
\end{equation}
following the standard approach presented in \cite{hepi18} (see \cite{bw} for Robin boundary conditions for the functional $\mathcal{F}_2$).
It is well known that the Euler-Lagrange equation of $\mathcal{F}_p$ in \eqref{appendix:energy} is exactly problem \eqref{eq:problem_p}. In particular, one has that 
\begin{equation*}
    \mathcal{F}_p(\Omega) = \min_{v \in W^{1,p}(\Omega)} \mathcal{F}_p(v).
\end{equation*}

Let $V\in C^{2,\alpha}(\R^n,\R^n)$ be a smooth vector field. For a given $t_0>0$, we denote by $\{\Omega_t\}_{0\leq t<t_0}$ a family of perturbations of the domain $\Omega$ of the form
\[\Omega_t=(I_N+tV)\Omega.\]
If $t_0$ is sufficiently small, the function 
\begin{equation}\label{eq: app Phi}
    \Phi(t,x)=I_N(x)+tV(x), \quad \text{for all }t \in [0,t_0),\,  x \in \Omega,
\end{equation} is bi-Lipschitz homeomorphism (see \cite[Remark 2.2]{cnt2024}). In particular, the shape derivative of the functional $\mathcal{F}_p$ at a given fix domain $\Omega$ in the direction $V$ is defined as the following limit (if such limit exists)
\begin{equation*}
    d\mathcal{F}_p(\Omega,V):=\lim_{t \to 0^+}\frac{\mathcal{F}_p(\Omega_t)-\mathcal{F}_p(\Omega)}{t}.
\end{equation*}
In order to compute the shape derivative of the functional $\mathcal{F}_p$ in the smooth setting we are interested in, we recall the following formulae of \cite[Theorem 5.2.2 \& Theorem 5.4.17]{hepi18} (see also \cite[Corollary 5.2.5 \& Proposition 5.4.18]{hepi18})

\begin{teo}[Hadamard Formulae]\label{thm hadamard}
    Let $\Omega$ be an open, bounded connected subset of $\R^n$ of class $C^{2,\alpha}$ with $\alpha \in (0,1)$. Let $\Phi(t,x)$ be as in \eqref{eq: app Phi}. Let $t \in [0,t_0) \to f(t) \in L^1(\Omega_t)$ and $t \in [0,t_0) \to g(t) \in L^1(\Omega_t)$. Assume that 
    \begin{equation*}
    \begin{split}
    &t \in [0,t_0) \to F(t) = f(t, \Phi(t,x)) \in L^1(\Omega) \text{ is differentiable at $0$ and } f(0) \in W^{1,1}(\Omega),
    \\
    &t \in [0,t_0) \to G(t) = g(t, \Phi(t,x)) \in W^{1,1}(\Omega) \text{ is differentiable at $0$ and } g(0) \in W^{2,1}(\Omega).
    \end{split}
    \end{equation*}
    Then,
    \begin{align}
        \frac{d}{dt}\int_{\Omega_t}f(t)\d{x}\bigg\rvert_{t=0}&=\int_\Omega f'(0)\d{x}+\int_{\partial\Omega}f(0)\langle V,\nu\rangle\d{}\mathcal{H}^{n-1},\label{hadamard-interior}\\
        \frac{d}{dt}\int_{\partial\Omega_t}g(t)\d{}\mathcal{H}^{n-1}\bigg\rvert_{t=0}&=\int_{\partial\Omega}g'(0)+\left(g_\nu(0)+(N-1)\mathrm{M_{\partial\Omega}} \,g(0)\right)\langle V,\nu\rangle\d{}\mathcal{H}^{n-1}.\label{hadamard-boundary}
    \end{align}
\end{teo}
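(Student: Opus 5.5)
We argue by transporting both integrals back to the fixed domain $\Omega$ through the diffeomorphism $\Phi(t,\cdot)=I_N+tV$ of \eqref{eq: app Phi}. Then we differentiate at $t=0$ under the integral sign, and finally convert the resulting material derivatives into the shape derivatives $f'(0)$, $g'(0)$ plus a divergence term.

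\emph{Step 1 (volume integral).} By the change of variables formula,
\[
\int_{\Omega_t} f(t)\,\d{x}=\int_\Omega F(t)\,J_t\,\d{x},\qquad J_t=\det\big(I_N+t\nabla V\big)=1+t\,\div V+o(t),
\]
uniformly on $\overline\Omega$, since $V\in C^{2,\alpha}$. Differentiating at $t=0$, using the assumed differentiability of $t\mapsto F(t)$ in $L^1(\Omega)$ and $J_0=1$, gives
\[
\frac{d}{dt}\int_{\Omega_t}f(t)\,\d{x}\Big|_{t=0}=\int_\Omega \big(\dot F(0)+f(0)\div V\big)\,\d{x}.
\]
The material derivative splits as $\dot F(0)=f'(0)+\langle\nabla f(0),V\rangle$; this is where $f(0)\in W^{1,1}(\Omega)$ is needed. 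Hence the integrand equals $f'(0)+\div\big(f(0)V\big)$, and the Divergence Theorem on $\Omega$ yields \eqref{hadamard-interior}.

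\emph{Step 2 (boundary integral).} In the same way,
\[
\int_{\partial\Omega_t}g(t)\,\d{}\mathcal H^{N-1}=\int_{\partial\Omega}G(t)\,\omega_t\,\d{}\mathcal H^{N-1},\qquad \omega_t=J_t\,\big|(I_N+t\nabla V)^{-T}\nu\big|,
\]
where $\omega_t$ is the tangential Jacobian. Expanding, $(I_N+t\nabla V)^{-T}\nu=\nu-t(\nabla V)^T\nu+o(t)$, so
\[
\omega_t=1+t\big(\div V-\langle(\nabla V)\nu,\nu\rangle\big)+o(t)=1+t\,\div_\top V+o(t).
\]
Therefore the derivative at $t=0$ equals
\[
\int_{\partial\Omega}\big(g'(0)+\langle\nabla g(0),V\rangle+g(0)\,\div_\top V\big)\,\d{}\mathcal H^{N-1}.
\]
Here $g(0)\in W^{2,1}(\Omega)$ guarantees that $\nabla g(0)$ has a trace on $\partial\Omega$.

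\emph{Step 3 (integration by parts on $\partial\Omega$).} Apply Theorem \ref{thm: divergence part Om} with $f=g(0)$ and $\mathbf u=V$:
\[
\int_{\partial\Omega}g(0)\div_\top V=-\int_{\partial\Omega}\langle V,\nabla_\top g(0)\rangle+(N-1)\int_{\partial\Omega}g(0)\langle V,\nu\rangle\mathrm M_{\partial\Omega}.
\]
Since $\langle\nabla g(0),V\rangle-\langle\nabla_\top g(0),V\rangle=g_\nu(0)\langle V,\nu\rangle$, the tangential parts cancel and \eqref{hadamard-boundary} follows. If needed, Theorem \ref{thm: divergence part Om} is applied first for smooth $g$ and extended by density.

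\emph{Main obstacle.} The expected main obstacle is the rigorous justification of Steps 1–2 at the stated low regularity. One must show that the $o(t)$ remainders in $J_t$ and $\omega_t$, multiplied by $F(t)$ and $G(t)$, are $o(t)$ in $L^1$. One must also justify the chain rule $\dot F(0)=f'(0)+\langle\nabla f(0),V\rangle$ in $L^1$, which requires the continuity in $L^1$ of $h\mapsto h\circ\Phi(t,\cdot)$. I would handle this by first proving the formulas for smooth $f,g$ and then approximating in $W^{1,1}$ and $W^{2,1}$, respectively, using the trace theorem for the boundary case, following \cite{hepi18}.
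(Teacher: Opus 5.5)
Your proposal is correct and is essentially the argument the paper relies on. The paper gives no proof of its own but quotes \cite{hepi18}, where the formulae are obtained as you do: pull back by $\Phi(t,\cdot)$, expand the volume and tangential Jacobians to get $\div V$ and $\div_\top V$, split the material derivative, and apply the tangential divergence theorem to produce the mean curvature term. One remark: in the setting of the statement, $f(t)$ and $g(t)$ only live on $\Omega_t$, so the splitting $\dot F(0)=f'(0)+\langle\nabla f(0),V\rangle$ is effectively what gives meaning to $f'(0)$ (and to $g'(0)$ on $\partial\Omega$, which is where $g(0)\in W^{2,1}(\Omega)$ is needed). That step is therefore a definition rather than a chain rule requiring separate justification.
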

Moreover, the perturbation $\Phi(t,x)$ is called volume preserving if
\begin{equation}\label{eq app volume preserving}
    \int_{\partial\Omega}\langle V,\nu\rangle\d{}\mathcal{H}^{n-1}=0.
\end{equation}
We are now ready to derive the condition \eqref{intro eq overdetermined cond p}. We just sketch the proof (cf. \cite[Thm. 4.1]{bw}).
\begin{teo}
    Let $\Omega$ be an open, bounded connected subset of $\R^n$ of class $C^{2,\alpha}$ with $\alpha \in (0,1)$. Let $\mathcal{F}_p$ be as in \eqref{appendix:energy} and let $\Phi$ be given by \eqref{eq: app Phi}. Assume that $\Phi$ is a volume preserving homeomorphism. Then, the first order shape derivative of $\mathcal{F}_p$ is given by
    \begin{equation*}
        d\mathcal{F}_p(\Omega,V)=\frac{1}{p}\int_{\partial\Omega}\left(|\nabla u|^p-p|\nabla u|^{p-2}(u_\nu)^2+\beta(N-1)\mathrm{M}_{\partial\Omega}u^p+pNu\right)\langle V,\nu\rangle\d{}\mathcal{H}^{n-1}.
    \end{equation*}
    In particular, $\Omega$ is a critical set for volume preserving perturbation for the functional $\mathcal{F}_p$, i.e. 
    \[
    d\mathcal{F}_p(\Omega,V)=0,
    \]
    if and only if
    \begin{equation}\label{eq app overdetermined}
    |\nabla u|^p-p|\nabla u|^{p-2}(u_\nu)^2+\beta(N-1)\mathrm{M}_{\partial\Omega}u^p+pNu=C,
    \end{equation}
    for some constant $C\in\R$.

\end{teo}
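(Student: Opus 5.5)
The plan is to compute $\frac{d}{dt}\mathcal{F}_p(u_t,\Omega_t)|_{t=0}$, where $u_t$ is the solution of \eqref{eq:problem_p} on $\Omega_t$. I will use the Hadamard formulae of Theorem \ref{thm hadamard} and then exploit the weak formulation to eliminate the shape derivative $u'$. Since $u<0$, I will read $u^p$ as $|u|^p$ throughout. Differentiability of $t\mapsto u_t\circ\Phi(t,\cdot)$ at $t=0$ is assumed, which is the standard implicit function theorem argument of \cite{hepi18}. Theorem \ref{thm regularity C2} gives $u\in C^2$ near $\partial\Omega$, so the boundary terms are meaningful, and the interior singular set $Z$ has measure zero.

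\textbf{Step 1.} Apply \eqref{hadamard-interior} to $f(t)=\frac1p|\nabla u_t|^p+Nu_t$ and \eqref{hadamard-boundary} to $g(t)=\frac{\beta}{p}|u_t|^p$. This gives
\begin{align*}
d\mathcal{F}_p(\Omega,V)={}&\int_\Omega\big(|\nabla u|^{p-2}\langle\nabla u,\nabla u'\rangle+Nu'\big)\d x+\beta\int_{\partial\Omega}|u|^{p-2}u\,u'\d{}\mathcal{H}^{N-1}\\
&+\int_{\partial\Omega}\Big(\tfrac1p|\nabla u|^p+Nu+\tfrac{\beta}{p}\partial_\nu(|u|^p)+\tfrac{\beta}{p}(N-1)\mathrm{M}_{\partial\Omega}|u|^p\Big)\langle V,\nu\rangle\d{}\mathcal{H}^{N-1}.
\end{align*}
Here $u'$ is the Eulerian derivative and $g'(0)=\beta|u|^{p-2}u\,u'$.

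\textbf{Step 2.} The first line vanishes. Indeed, it is exactly the weak formulation of \eqref{eq:problem_p} tested with $\varphi=u'\in W^{1,p}(\Omega)$. This is the usual reason why only boundary terms survive for a minimizing state.

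\textbf{Step 3.} Simplify the remaining term $\frac{\beta}{p}\partial_\nu(|u|^p)=\beta|u|^{p-2}u\,u_\nu$. By the Robin condition this equals $-|\nabla u|^{p-2}(u_\nu)^2$. Multiplying through by $p$ yields the claimed formula
\[
d\mathcal{F}_p(\Omega,V)=\frac1p\int_{\partial\Omega}\big(|\nabla u|^p-p|\nabla u|^{p-2}(u_\nu)^2+\beta(N-1)\mathrm{M}_{\partial\Omega}|u|^p+pNu\big)\langle V,\nu\rangle\d{}\mathcal{H}^{N-1}.
\]

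\textbf{Step 4.} For the characterization, if the bracket equals a constant $C$, then \eqref{eq app volume preserving} forces $d\mathcal{F}_p=0$. Conversely, suppose the bracket $h$ is orthogonal in $L^2(\partial\Omega)$ to every $\langle V,\nu\rangle$ with zero mean. Given any $\psi\in C^{2,\alpha}(\partial\Omega)$ with zero mean, I extend $\psi\nu$ to a $C^{2,\alpha}$ field $V$, for instance via $\nabla d_{\partial\Omega}$ and a cutoff as in Lemma \ref{lem:tangential}. Then $h$ is orthogonal to all such $\psi$, and by density $h-\frac{1}{|\partial\Omega|}\int_{\partial\Omega}h$ is orthogonal to itself, so $h$ is constant.

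The main obstacle is justifying Step 1. This requires differentiability of the transported state $u_t\circ\Phi_t$ in $W^{1,p}$, with enough regularity to write $u'$ and $f'(0)$. The difficulty is that the $p$-Laplacian is degenerate at $Z$. I will handle it as the paper does, sketching and citing the $C^2$ regularity near $\partial\Omega$ together with \cite{hepi18,bw}. The interior computation needs only the $W^{1,p}$ weak formulation, so the degeneracy at $Z$ does not affect Step 2.
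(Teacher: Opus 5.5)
Your proposal is correct and follows the paper's proof essentially step by step: Hadamard formulae for the interior and boundary terms, cancellation of the $u'$-terms via the weak formulation tested with the shape derivative, the Robin condition to turn $\beta|u|^{p-2}u\,u_\nu$ into $-|\nabla u|^{p-2}(u_\nu)^2$, and the standard orthogonality/density argument for the characterization (which the paper simply delegates to Bandle--Wagner). One small caveat in Step 4: since $\nu$ is only $C^{1,\alpha}$, extending $\psi\nu$ via $\nabla d_{\partial\Omega}$ gives merely a $C^{1,\alpha}$ field rather than a $C^{2,\alpha}$ one; this is harmless, because all you need is that the normal traces $\langle V,\nu\rangle$ of smooth admissible fields are dense in the zero-mean subspace of $L^2(\partial\Omega)$, which they are.
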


\begin{proof}
    Let $\tilde{u}(t, \cdot)$ be the weak solution of problem \eqref{eq:problem_p} for $\Omega = \Omega_t$. Notice that $\tilde{u}(0,\cdot) = u$, where $u$ is the unique weak solution of \eqref{eq:problem_p} in $\Omega$. Define
    \[
    u(t) = \tilde{u}(t, \Phi(t,x)) \quad \text{for all } t\in[0,t_0),\, x\in\Omega.
    \] 
    We derive each term of $\mathcal{F}_p$. For the differentiability assumptions required by Theorem \ref{thm hadamard} we refer for instance to \cite[Section 5.3]{hepi18} for Dirichlet boundary conditions and to \cite[Lemma 3.1]{crist2026} for Robin boundary conditions. Moreover, for the smoothness of $u(t)$ we refer to Proposition \ref{prop properties u} and Theorem \ref{thm regularity C2} applied to the domain $\Omega_t$.  By formula \eqref{hadamard-interior} we have
    \begin{equation}\label{interior-deriv}
        \begin{aligned}
            \frac{d}{dt}\left( \int_{\Omega_t}|\nabla u(t)|^p \d{x} \right) \bigg\rvert_{t=0} &=\int_\Omega p|\nabla u|^{p-2}\langle\nabla u,\nabla \dot{u}\rangle\d{x}+\int_{\partial\Omega}|\nabla u|^p\langle V,\nu\rangle\d{}\mathcal{H}^{n-1},\\
            \frac{d}{dt}\left( \int_{\Omega_t}N u(t) \d{x}\right) \bigg\rvert_{t=0} &=N\int_\Omega \dot{u} \d{x}+\int_{\partial\Omega}Nu\langle V,\nu\rangle\d{}\mathcal{H}^{n-1},
        \end{aligned}
    \end{equation}
    where $\dot{u} = \frac{d}{dt}u(t)\big\rvert_{t=0}$ is a function in $W^{1,p}(\Omega)$ (for the precise definition and the problem solved by $\dot{u}$, see \cite[Section 5.3]{hepi18}). Similarly, using \eqref{hadamard-boundary} for the boundary term
    \begin{equation}\label{boundary-deriv}
        \frac{d}{dt}\int_{\partial\Omega_t}|u(t)|^p\d{}\mathcal{H}^{n-1}\bigg\rvert_{t=0}=\int_{\partial\Omega}p|u|^{p-2}u \dot{u} + \left(p|u|^{p-2} u u_\nu+(N-1) \mathrm{M}_{\partial\Omega} \, u^p \right) \langle V,\nu\rangle \d{}\mathcal{H}^{n-1}.
    \end{equation}
    Hence, combining \eqref{interior-deriv} and \eqref{boundary-deriv} yields
    \begin{equation*}
        \begin{aligned}
            d\mathcal{F}_p(\Omega,V)=&\int_\Omega \left( |\nabla u|^{p-2}\langle\nabla u,\nabla \dot{u} \rangle + N \dot{u} \right) \d{x} + \beta\int_{\partial\Omega}|u|^{p-2}u\dot{u}\d{}\mathcal{H}^{n-1}
            \\
            &+\int_{\partial\Omega}\langle V,\nu\rangle\left(\frac{|\nabla u|^p}{p}+\beta|u|^{p-2}uu_\nu+\frac{\beta}{p}(N-1)\mathrm{M}_{\partial\Omega}u^p+Nu\right)\d{}\mathcal{H}^{n-1}
            \\
            =&\frac{1}{p}\int_{\partial\Omega}\left(|\nabla u|^p-p|\nabla u|^{p-2}(u_\nu)^2+\beta(N-1)\mathrm{M}_{\partial\Omega}u^p+pNu\right)\langle V,\nu\rangle\d{}\mathcal{H}^{n-1},
        \end{aligned}
    \end{equation*}
    where the first term vanishes because is exactly the weak formulation of \eqref{eq:problem_p} for $u$ tested for $\varphi = \dot{u}$. 

    Finally, \eqref{eq app overdetermined} follows by the same argument of \cite[Theorem 4.1]{bw}, once \eqref{eq app volume preserving} is in force (i.e. $V$ is volume preserving). The proof is complete.
\end{proof}

\subsection*{Acknowledgments}
This work has been partially supported by GNAMPA of INdAM. 

\noindent
Riccardo Molinarolo is supported by the Project ``Giochi a campo medio, trasporto e ottimizzazione in sistemi auto-organizzati e machine learning”, funded by MUR, D.D. 47/2025, PNRR - Missione 4, Componente 2, Investimento 1.2 - funded by European Union NextGenerationEU, CUP B33C25000380001.

\noindent 
The authors are indebted to Professors N.\,Gavitone and C.\,Trombetti for encouraging us to investigate this problem and for helpful discussions and valuable suggestions. 

\addtocontents{toc}{\protect\setcounter{tocdepth}{1}}
\subsection*{AI statement}
No generative AI tool was used to generate research ideas, interpret
results, or write scientific content.

\bibliographystyle{plain}
\bibliography{bibliography}

@article {bk2011,
    AUTHOR = {Buttazzo, Giuseppe and Kawohl, Bernd},
     TITLE = {Overdetermined boundary value problems for the
              {$\infty$}-{L}aplacian},
   JOURNAL = {Int. Math. Res. Not. IMRN},
  FJOURNAL = {International Mathematics Research Notices. IMRN},
      YEAR = {2011},
    VOLUME = {2011},
    NUMBER = {2},
     PAGES = {237--247},
      ISSN = {1073-7928,1687-0247},
   MRCLASS = {35N25 (35J60 35J92)},
  MRNUMBER = {2764863},
MRREVIEWER = {Steven\ George\ Krantz},
       DOI = {10.1093/imrn/rnq071},
       URL = {https://doi.org/10.1093/imrn/rnq071},
}

@book {giusti84,
    AUTHOR = {Giusti, Enrico},
     TITLE = {Minimal surfaces and functions of bounded variation},
    SERIES = {Monographs in Mathematics},
    VOLUME = {80},
 PUBLISHER = {Birkh\"auser Verlag, Basel},
      YEAR = {1984},
     PAGES = {xii+240},
      ISBN = {0-8176-3153-4},
   MRCLASS = {58E12 (49F10 53A10)},
  MRNUMBER = {775682},
MRREVIEWER = {Helmut\ Kaul},
       DOI = {10.1007/978-1-4684-9486-0},
       URL = {https://doi.org/10.1007/978-1-4684-9486-0},
}

@book {li12,
    AUTHOR = {Li, Peter},
     TITLE = {Geometric analysis},
    SERIES = {Cambridge Studies in Advanced Mathematics},
    VOLUME = {134},
 PUBLISHER = {Cambridge University Press, Cambridge},
      YEAR = {2012},
     PAGES = {x+406},
      ISBN = {978-1-107-02064-1},
   MRCLASS = {58-02 (35P15 53C21 58J32 58J35)},
  MRNUMBER = {2962229},
MRREVIEWER = {Fr\'ed\'eric\ Robert},
       DOI = {10.1017/CBO9781139105798},
       URL = {https://doi.org/10.1017/CBO9781139105798},
}

@book {hepi18,
    AUTHOR = {Henrot, Antoine and Pierre, Michel},
     TITLE = {Shape variation and optimization},
    SERIES = {EMS Tracts in Mathematics},
    VOLUME = {28},
 PUBLISHER = {European Mathematical Society (EMS), Z\"urich},
      YEAR = {2018},
     PAGES = {xi+365},
      ISBN = {978-3-03719-178-1},
   MRCLASS = {49Q10 (31B15 35J20 35R35 49-02 58E25 65J05)},
  MRNUMBER = {3791463},
MRREVIEWER = {Jan\ Soko\l owski},
       DOI = {10.4171/178},
       URL = {https://doi.org/10.4171/178},
}

@article {gamo26,
    AUTHOR = {Gavitone, Nunzia and Molinarolo, Riccardo},
     TITLE = {On the {S}errin's problem with {R}obin boundary conditions},
   JOURNAL = {Nonlinear Anal.},
  FJOURNAL = {Nonlinear Analysis. Theory, Methods \& Applications. An
              International Multidisciplinary Journal},
    VOLUME = {268},
      YEAR = {2026},
     PAGES = {Paper No. 114081, 10},
      ISSN = {0362-546X,1873-5215},
   MRCLASS = {35N25 (35A23 35B50 53A10)},
  MRNUMBER = {5035976},
       DOI = {10.1016/j.na.2026.114081},
       URL = {https://doi.org/10.1016/j.na.2026.114081},
}

@book {gitr83,
    AUTHOR = {Gilbarg, David and Trudinger, Neil S.},
     TITLE = {Elliptic partial differential equations of second order},
    SERIES = {Grundlehren der mathematischen Wissenschaften},
    VOLUME = {224},
   EDITION = {Second},
 PUBLISHER = {Springer-Verlag, Berlin},
      YEAR = {1983},
     PAGES = {xiii+513},
      ISBN = {3-540-13025-X},
   MRCLASS = {35Jxx (35-01)},
  MRNUMBER = {737190},
MRREVIEWER = {O.\ John},
       DOI = {10.1007/978-3-642-61798-0},
       URL = {https://doi.org/10.1007/978-3-642-61798-0},
}

@article {krpa81,
    AUTHOR = {Krantz, Steven G. and Parks, Harold R.},
     TITLE = {Distance to {$C\sp{k}$}\ hypersurfaces},
   JOURNAL = {J. Differential Equations},
  FJOURNAL = {Journal of Differential Equations},
    VOLUME = {40},
      YEAR = {1981},
    NUMBER = {1},
     PAGES = {116--120},
      ISSN = {0022-0396,1090-2732},
   MRCLASS = {58C07 (53A07)},
  MRNUMBER = {614221},
MRREVIEWER = {Jean-Marie\ Morvan},
       DOI = {10.1016/0022-0396(81)90013-9},
       URL = {https://doi.org/10.1016/0022-0396(81)90013-9},
}

@article {mapo19,
    AUTHOR = {Magnanini, Rolando and Poggesi, Giorgio},
     TITLE = {On the stability for {A}lexandrov's soap bubble theorem},
   JOURNAL = {J. Anal. Math.},
  FJOURNAL = {Journal d'Analyse Math\'ematique},
    VOLUME = {139},
      YEAR = {2019},
    NUMBER = {1},
     PAGES = {179--205},
      ISSN = {0021-7670,1565-8538},
   MRCLASS = {53A10 (35J93)},
  MRNUMBER = {4041100},
MRREVIEWER = {Jo\~ao\ Lucas Marques Barbosa},
       DOI = {10.1007/s11854-019-0058-y},
       URL = {https://doi.org/10.1007/s11854-019-0058-y},
}

@article {serrin,
    AUTHOR = {Serrin, James},
     TITLE = {A symmetry problem in potential theory},
   JOURNAL = {Arch. Rational Mech. Anal.},
  FJOURNAL = {Archive for Rational Mechanics and Analysis},
    VOLUME = {43},
      YEAR = {1971},
     PAGES = {304--318},
      ISSN = {0003-9527},
   MRCLASS = {31B20},
  MRNUMBER = {333220},
MRREVIEWER = {Gottfried\ Anger},
       DOI = {10.1007/BF00250468},
       URL = {https://doi.org/10.1007/BF00250468},
}

@book{PucciSerrin07,
  title={The maximum principle},
  author={Pucci, Patrizia and Serrin, James},
  year={2007},
  publisher={Springer}
}

@book{Lieberman12,
  title={Oblique derivative problems for elliptic equations},
  author={Lieberman, Gary M.},
  year={2013},
  publisher={World Scientific}
}

@article{Lieberman88,
  title = {Boundary regularity for solutions of degenerate elliptic equations},
  volume = {12},
  ISSN = {0362-546X},
  url = {http://dx.doi.org/10.1016/0362-546X(88)90053-3},
  DOI = {10.1016/0362-546x(88)90053-3},
  number = {11},
  journal = {Nonlinear Analysis: Theory,  Methods \&; Applications},
  publisher = {Elsevier BV},
  author = {Lieberman,  Gary M.},
  year = {1988},
  month = Nov,
  pages = {1203–1219}
}

@article {RuHuCh24,
    AUTHOR = {Ruan, Qihua and Huang, Qin and Chen, Fan},
     TITLE = {The {$p$}-{L}aplacian overdetermined problem on {R}iemannian
              manifolds},
   JOURNAL = {Ann. Mat. Pura Appl. (4)},
  FJOURNAL = {Annali di Matematica Pura ed Applicata. Series IV},
    VOLUME = {203},
      YEAR = {2024},
    NUMBER = {2},
     PAGES = {647--662},
      ISSN = {0373-3114,1618-1891},
   MRCLASS = {35A23 (35J92 35N25)},
  MRNUMBER = {4715314},
MRREVIEWER = {Stefan\ Steinerberger},
       DOI = {10.1007/s10231-023-01377-0},
       URL = {https://doi.org/10.1007/s10231-023-01377-0},
}

@article {CoFe20,
    AUTHOR = {Colasuonno, Francesca and Ferrari, Fausto},
     TITLE = {The soap bubble-theorem and a {$p$}-{L}aplacian overdetermined
              problem},
   JOURNAL = {Commun. Pure Appl. Anal.},
  FJOURNAL = {Communications on Pure and Applied Analysis},
    VOLUME = {19},
      YEAR = {2020},
    NUMBER = {2},
     PAGES = {983--1000},
      ISSN = {1534-0392,1553-5258},
   MRCLASS = {35N25 (35A23 35B06 35J92 53A10)},
  MRNUMBER = {4043775},
MRREVIEWER = {Bogdan\ Rai\c t\u a},
       DOI = {10.3934/cpaa.2020045},
       URL = {https://doi.org/10.3934/cpaa.2020045},
}

@article {abr1999,
    AUTHOR = {Aftalion, Amandine and Busca, J\'er\^ome and Reichel,
              Wolfgang},
     TITLE = {Approximate radial symmetry for overdetermined boundary value
              problems},
   JOURNAL = {Adv. Differential Equations},
  FJOURNAL = {Advances in Differential Equations},
    VOLUME = {4},
      YEAR = {1999},
    NUMBER = {6},
     PAGES = {907--932},
      ISSN = {1079-9389},
   MRCLASS = {35J65 (35B50)},
  MRNUMBER = {1729395},
MRREVIEWER = {Alexandra\ Kurepa},
}

@article {a58,
    AUTHOR = {Aleksandrov, Aleksandr D.},
     TITLE = {Uniqueness theorems for surfaces in the large. {V}},
   JOURNAL = {Vestnik Leningrad. Univ.},
  FJOURNAL = {Vestnik Leningrad. Univ.},
    VOLUME = {13},
      YEAR = {1958},
    NUMBER = {19},
     PAGES = {5--8},
   MRCLASS = {53.00},
  MRNUMBER = {102114},
MRREVIEWER = {H.\ Busemann},
}

@article {a62,
    AUTHOR = {Alexandrov, Aleksandr D.},
     TITLE = {A characteristic property of spheres},
   JOURNAL = {Ann. Mat. Pura Appl. (4)},
  FJOURNAL = {Annali di Matematica Pura ed Applicata. Serie Quarta},
    VOLUME = {58},
      YEAR = {1962},
     PAGES = {303--315},
      ISSN = {0003-4622},
   MRCLASS = {53.75},
  MRNUMBER = {143162},
MRREVIEWER = {A.\ Fialkow},
       DOI = {10.1007/BF02413056},
       URL = {https://doi.org/10.1007/BF02413056},
}

@article {bw,
    AUTHOR = {Bandle, Catherine and Wagner, Alfred},
     TITLE = {Second domain variation for problems with {R}obin boundary
              conditions},
   JOURNAL = {J. Optim. Theory Appl.},
  FJOURNAL = {Journal of Optimization Theory and Applications},
    VOLUME = {167},
      YEAR = {2015},
    NUMBER = {2},
     PAGES = {430--463},
      ISSN = {0022-3239,1573-2878},
   MRCLASS = {49Q10 (35J20 35N25 49J20 49K20 49R05)},
  MRNUMBER = {3412445},
MRREVIEWER = {Andrzej\ M.\ My\'sli\'nski},
       DOI = {10.1007/s10957-015-0801-1},
       URL = {https://doi.org/10.1007/s10957-015-0801-1},
}

@article {bc,
    AUTHOR = {Bianchini, Chiara and Ciraolo, Giulio},
     TITLE = {Wulff shape characterizations in overdetermined anisotropic
              elliptic problems},
   JOURNAL = {Comm. Partial Differential Equations},
  FJOURNAL = {Communications in Partial Differential Equations},
    VOLUME = {43},
      YEAR = {2018},
    NUMBER = {5},
     PAGES = {790--820},
      ISSN = {0360-5302,1532-4133},
   MRCLASS = {35N25 (35A23 35B06 35J25)},
  MRNUMBER = {3920523},
MRREVIEWER = {Steven\ George\ Krantz},
       DOI = {10.1080/03605302.2018.1475488},
       URL = {https://doi.org/10.1080/03605302.2018.1475488},
}

@article {bnst,
   author={Brandolini, Barbara and Nitsch, Carlo and Salani, Paolo and Trombetti, Cristina},
   title={Serrin-type overdetermined problems: an alternative proof},
   journal={Arch. Ration. Mech. Anal.},
   volume={190},
   year={2008},
   number={2},
   pages={267--280},
   issn={0003-9527},
   review={\MR{2448319}},
   doi={10.1007/s00205-008-0119-3},
   url={https://doi.org/10.1007/s00205-008-0119-3},
}

@article {bnst08,
    AUTHOR = {Brandolini, Barbara and Nitsch, Carlo and Salani, Paolo and Trombetti, Cristina},
     TITLE = {On the stability of the {S}errin problem},
   JOURNAL = {J. Differential Equations},
  FJOURNAL = {Journal of Differential Equations},
    VOLUME = {245},
      YEAR = {2008},
    NUMBER = {6},
     PAGES = {1566--1583},
      ISSN = {0022-0396,1090-2732},
   MRCLASS = {35J25 (35B35)},
  MRNUMBER = {2436453},
MRREVIEWER = {Antoine\ Henrot},
       DOI = {10.1016/j.jde.2008.06.010},
       URL = {https://doi.org/10.1016/j.jde.2008.06.010},
}

@article{bgnt,
   author={Brandolini, Barbara and Gavitone, Nunzia and Nitsch, Carlo and Trombetti, Cristina},
   title={Characterization of ellipsoids through an overdetermined boundary
   value problem of Monge-Amp\`ere type},
   language={English, with English and French summaries},
   journal={J. Math. Pures Appl. (9)},
   volume={101},
   year={2014},
   number={6},
   pages={828--841},
   issn={0021-7824},
   review={\MR{3205644}},
   doi={10.1016/j.matpur.2013.10.005},
   url={https://doi.org/10.1016/j.matpur.2013.10.005},
}

@article{bh,
   author={Brock, Friedemann and Henrot, Antoine},
   title={A symmetry result for an overdetermined elliptic problem using
   continuous rearrangement and domain derivative},
   journal={Rend. Circ. Mat. Palermo (2)},
   volume={51},
   year={2002},
   number={3},
   pages={375--390},
   issn={0009-725X},
   review={\MR{1947461}},
   doi={10.1007/BF02871848},
   url={https://doi.org/10.1007/BF02871848}
}

@article {cgs,
    AUTHOR = {Caffarelli, Luis and Garofalo, Nicola and Seg\`ala, Fausto},
     TITLE = {A gradient bound for entire solutions of quasi-linear
              equations and its consequences},
   JOURNAL = {Comm. Pure Appl. Math.},
  FJOURNAL = {Communications on Pure and Applied Mathematics},
    VOLUME = {47},
      YEAR = {1994},
    NUMBER = {11},
     PAGES = {1457--1473},
      ISSN = {0010-3640,1097-0312},
   MRCLASS = {35B45 (35J60 53A10)},
  MRNUMBER = {1296785},
MRREVIEWER = {C.\ A.\ Swanson},
       DOI = {10.1002/cpa.3160471103},
       URL = {https://doi.org/10.1002/cpa.3160471103},
}

@article {cs,
    AUTHOR = {Cianchi, Andrea and Salani, Paolo},
     TITLE = {Overdetermined anisotropic elliptic problems},
   JOURNAL = {Math. Ann.},
  FJOURNAL = {Mathematische Annalen},
    VOLUME = {345},
      YEAR = {2009},
    NUMBER = {4},
     PAGES = {859--881},
      ISSN = {0025-5831,1432-1807},
   MRCLASS = {35N10 (35J35)},
  MRNUMBER = {2545870},
       DOI = {10.1007/s00208-009-0386-9},
       URL = {https://doi.org/10.1007/s00208-009-0386-9},
}

@article {cm2017,
    AUTHOR = {Ciraolo, Giulio and Maggi, Francesco},
     TITLE = {On the shape of compact hypersurfaces with almost-constant
              mean curvature},
   JOURNAL = {Comm. Pure Appl. Math.},
  FJOURNAL = {Communications on Pure and Applied Mathematics},
    VOLUME = {70},
      YEAR = {2017},
    NUMBER = {4},
     PAGES = {665--716},
      ISSN = {0010-3640,1097-0312},
   MRCLASS = {53A10 (35J93 52A20 53C42)},
  MRNUMBER = {3628882},
MRREVIEWER = {Fei-Tsen\ Liang},
       DOI = {10.1002/cpa.21683},
       URL = {https://doi.org/10.1002/cpa.21683},
}

@article {cms2015,
    AUTHOR = {Ciraolo, Giulio and Magnanini, Rolando and Sakaguchi, Shigeru},
     TITLE = {Symmetry of minimizers with a level surface parallel to the
              boundary},
   JOURNAL = {J. Eur. Math. Soc. (JEMS)},
  FJOURNAL = {Journal of the European Mathematical Society (JEMS)},
    VOLUME = {17},
      YEAR = {2015},
    NUMBER = {11},
     PAGES = {2789--2804},
      ISSN = {1435-9855,1435-9863},
   MRCLASS = {35A15 (35B06 35J20 35J60 35K55 35N25 49J10 49Q10)},
  MRNUMBER = {3420522},
MRREVIEWER = {Pablo\ Pedregal},
       DOI = {10.4171/JEMS/571},
       URL = {https://doi.org/10.4171/JEMS/571},
}

@article {cmv,
    AUTHOR = {Ciraolo, Giulio and Magnanini, Rolando and Vespri, Vincenzo},
     TITLE = {H\"older stability for {S}errin's overdetermined problem},
   JOURNAL = {Ann. Mat. Pura Appl. (4)},
  FJOURNAL = {Annali di Matematica Pura ed Applicata. Series IV},
    VOLUME = {195},
      YEAR = {2016},
    NUMBER = {4},
     PAGES = {1333--1345},
      ISSN = {0373-3114,1618-1891},
   MRCLASS = {35N25 (35B06 35B09 35B35 35J91)},
  MRNUMBER = {3522349},
       DOI = {10.1007/s10231-015-0518-7},
       URL = {https://doi.org/10.1007/s10231-015-0518-7},
}

@article {cv18,
    AUTHOR = {Ciraolo, Giulio and Vezzoni, Luigi},
     TITLE = {A sharp quantitative version of {A}lexandrov's theorem via the
              method of moving planes},
   JOURNAL = {J. Eur. Math. Soc. (JEMS)},
  FJOURNAL = {Journal of the European Mathematical Society (JEMS)},
    VOLUME = {20},
      YEAR = {2018},
    NUMBER = {2},
     PAGES = {261--299},
      ISSN = {1435-9855,1435-9863},
   MRCLASS = {53A10 (35B35 35B50 35B51 35J93 53C21)},
  MRNUMBER = {3760295},
MRREVIEWER = {Martin\ L. P. Kilian},
       DOI = {10.4171/JEMS/766},
       URL = {https://doi.org/10.4171/JEMS/766},
}

@article {cv2019,
    AUTHOR = {Ciraolo, Giulio and Vezzoni, Luigi},
     TITLE = {On {S}errin's overdetermined problem in space forms},
   JOURNAL = {Manuscripta Math.},
  FJOURNAL = {Manuscripta Mathematica},
    VOLUME = {159},
      YEAR = {2019},
    NUMBER = {3-4},
     PAGES = {445--452},
      ISSN = {0025-2611,1432-1785},
   MRCLASS = {35N25 (35B50 35R01 53C24 58J05)},
  MRNUMBER = {3959271},
MRREVIEWER = {Alberto\ Parmeggiani},
       DOI = {10.1007/s00229-018-1079-z},
       URL = {https://doi.org/10.1007/s00229-018-1079-z},
}

@article {dp,
    AUTHOR = {Damascelli, Lucio and Pacella, Filomena},
     TITLE = {Monotonicity and symmetry results for {$p$}-{L}aplace
              equations and applications},
   JOURNAL = {Adv. Differential Equations},
  FJOURNAL = {Advances in Differential Equations},
    VOLUME = {5},
      YEAR = {2000},
    NUMBER = {7-9},
     PAGES = {1179--1200},
      ISSN = {1079-9389},
   MRCLASS = {35J65 (35B05 35B50 35J70)},
  MRNUMBER = {1776351},
MRREVIEWER = {Srinivasan\ Kesavan},
}

@article {dep23,
    AUTHOR = {Dom\'inguez-V\'azquez, Miguel and Enciso, Alberto and
              Peralta-Salas, Daniel},
     TITLE = {Overdetermined boundary problems with nonconstant {D}irichlet
              and {N}eumann data},
   JOURNAL = {Anal. PDE},
  FJOURNAL = {Analysis \& PDE},
    VOLUME = {16},
      YEAR = {2023},
    NUMBER = {9},
     PAGES = {1989--2003},
      ISSN = {2157-5045,1948-206X},
   MRCLASS = {35N25 (35J61)},
  MRNUMBER = {4668085},
       DOI = {10.2140/apde.2023.16.1989},
       URL = {https://doi.org/10.2140/apde.2023.16.1989},
}

@article {fm2015,
    AUTHOR = {Fall, Mouhamed Moustapha and Minlend, Ignace Aristide},
     TITLE = {Serrin's over-determined problem on {R}iemannian manifolds},
   JOURNAL = {Adv. Calc. Var.},
  FJOURNAL = {Advances in Calculus of Variations},
    VOLUME = {8},
      YEAR = {2015},
    NUMBER = {4},
     PAGES = {371--400},
      ISSN = {1864-8258,1864-8266},
   MRCLASS = {58J05 (35N25 35R01 58J32 58J37)},
  MRNUMBER = {3403432},
MRREVIEWER = {Jyotshana\ Prajapat},
       DOI = {10.1515/acv-2014-0017},
       URL = {https://doi.org/10.1515/acv-2014-0017},
}

@article {fj,
    AUTHOR = {Fall, Mouhamed Moustapha and Jarohs, Sven},
     TITLE = {Overdetermined problems with fractional {L}aplacian},
   JOURNAL = {ESAIM Control Optim. Calc. Var.},
  FJOURNAL = {ESAIM. Control, Optimisation and Calculus of Variations},
    VOLUME = {21},
      YEAR = {2015},
    NUMBER = {4},
     PAGES = {924--938},
      ISSN = {1292-8119,1262-3377},
   MRCLASS = {35N25 (35B50 35R11)},
  MRNUMBER = {3395749},
       DOI = {10.1051/cocv/2014048},
       URL = {https://doi.org/10.1051/cocv/2014048},
}

@article {fk,
    AUTHOR = {Farina, Alberto and Kawohl, Bernd},
     TITLE = {Remarks on an overdetermined boundary value problem},
   JOURNAL = {Calc. Var. Partial Differential Equations},
  FJOURNAL = {Calculus of Variations and Partial Differential Equations},
    VOLUME = {31},
      YEAR = {2008},
    NUMBER = {3},
     PAGES = {351--357},
      ISSN = {0944-2669,1432-0835},
   MRCLASS = {35N10 (35J65)},
  MRNUMBER = {2366129},
       DOI = {10.1007/s00526-007-0115-8},
       URL = {https://doi.org/10.1007/s00526-007-0115-8},
}

@article {fr,
    AUTHOR = {Farina, Alberto and Roncoroni, Alberto},
     TITLE = {Serrin's type problems in warped product manifolds},
   JOURNAL = {Commun. Contemp. Math.},
  FJOURNAL = {Communications in Contemporary Mathematics},
    VOLUME = {24},
      YEAR = {2022},
    NUMBER = {4},
     PAGES = {Paper No. 2150020, 21},
      ISSN = {0219-1997,1793-6683},
   MRCLASS = {35R01 (35B50 35N25 53C24 58J05 58J32)},
  MRNUMBER = {4414163},
       DOI = {10.1142/S0219199721500206},
       URL = {https://doi.org/10.1142/S0219199721500206},
}

@article {fv,
    AUTHOR = {Farina, Alberto and Valdinoci, Enrico},
     TITLE = {A pointwise gradient estimate in possibly unbounded domains
              with nonnegative mean curvature},
   JOURNAL = {Adv. Math.},
  FJOURNAL = {Advances in Mathematics},
    VOLUME = {225},
      YEAR = {2010},
    NUMBER = {5},
     PAGES = {2808--2827},
      ISSN = {0001-8708,1090-2082},
   MRCLASS = {35J91 (35B45 35B65 35J25)},
  MRNUMBER = {2680184},
MRREVIEWER = {Siegfried\ Carl},
       DOI = {10.1016/j.aim.2010.05.008},
       URL = {https://doi.org/10.1016/j.aim.2010.05.008},
}

@article {fz2025,
    AUTHOR = {Figalli, Alessio and Zhang, Yi Ru-Ya},
     TITLE = {Serrin’s overdetermined problem in rough domains},
   JOURNAL = {J. Eur. Math. Soc. (JEMS)},
  FJOURNAL = {Journal of the European Mathematical Society (JEMS)},
      YEAR = {2025},
       DOI = {10.4171/JEMS/1726},
       URL = {https://doi.org/10.4171/JEMS/1726},
}

@article {f,
    AUTHOR = {Fragal\`a, Ilaria},
     TITLE = {Symmetry results for overdetermined problems on convex domains
              via {B}runn-{M}inkowski inequalities},
   JOURNAL = {J. Math. Pures Appl. (9)},
  FJOURNAL = {Journal de Math\'ematiques Pures et Appliqu\'ees. Neuvi\`eme
              S\'erie},
    VOLUME = {97},
      YEAR = {2012},
    NUMBER = {1},
     PAGES = {55--65},
      ISSN = {0021-7824,1776-3371},
   MRCLASS = {35N25 (35B06 35J57 49Q10 52A20 52A40)},
  MRNUMBER = {2863764},
MRREVIEWER = {Nikolai\ Tarkhanov},
       DOI = {10.1016/j.matpur.2011.09.001},
       URL = {https://doi.org/10.1016/j.matpur.2011.09.001},
}

@article {fgk,
    AUTHOR = {Fragal\`a, Ilaria and Gazzola, Filippo and Kawohl, Bernd},
     TITLE = {Overdetermined problems with possibly degenerate ellipticity,
              a geometric approach},
   JOURNAL = {Math. Z.},
  FJOURNAL = {Mathematische Zeitschrift},
    VOLUME = {254},
      YEAR = {2006},
    NUMBER = {1},
     PAGES = {117--132},
      ISSN = {0025-5874,1432-1823},
   MRCLASS = {35J60 (35B50 35J25 35J70 35N10)},
  MRNUMBER = {2232009},
MRREVIEWER = {Luisa\ Moschini},
       DOI = {10.1007/s00209-006-0937-7},
       URL = {https://doi.org/10.1007/s00209-006-0937-7},
}

@article {gl,
    AUTHOR = {Garofalo, Nicola and Lewis, John L.},
     TITLE = {A symmetry result related to some overdetermined boundary
              value problems},
   JOURNAL = {Amer. J. Math.},
  FJOURNAL = {American Journal of Mathematics},
    VOLUME = {111},
      YEAR = {1989},
    NUMBER = {1},
     PAGES = {9--33},
      ISSN = {0002-9327,1080-6377},
   MRCLASS = {35N10 (35J60)},
  MRNUMBER = {980297},
MRREVIEWER = {V.\ S.\ Rabinovich},
       DOI = {10.2307/2374477},
       URL = {https://doi.org/10.2307/2374477},
}

@book {li,
    AUTHOR = {Li, Peter},
     TITLE = {Geometric analysis},
    SERIES = {Cambridge Studies in Advanced Mathematics},
    VOLUME = {134},
 PUBLISHER = {Cambridge University Press, Cambridge},
      YEAR = {2012},
     PAGES = {x+406},
      ISBN = {978-1-107-02064-1},
   MRCLASS = {58-02 (35P15 53C21 58J32 58J35)},
  MRNUMBER = {2962229},
MRREVIEWER = {Fr\'ed\'eric\ Robert},
       DOI = {10.1017/CBO9781139105798},
       URL = {https://doi.org/10.1017/CBO9781139105798},
}

@article {gnn,
    AUTHOR = {Gidas, Basilis and Ni, Wei Ming and Nirenberg, Louis},
     TITLE = {Symmetry and related properties via the maximum principle},
   JOURNAL = {Comm. Math. Phys.},
  FJOURNAL = {Communications in Mathematical Physics},
    VOLUME = {68},
      YEAR = {1979},
    NUMBER = {3},
     PAGES = {209--243},
      ISSN = {0010-3616,1432-0916},
   MRCLASS = {35J25 (35B50)},
  MRNUMBER = {544879},
MRREVIEWER = {\`E.\ M.\ Saak},
       URL = {http://projecteuclid.org/euclid.cmp/1103905359},
}

@article {hp,
    AUTHOR = {Henrot, Antoine and Philippin, G\'erard A.},
     TITLE = {Some overdetermined boundary value problems with elliptical
              free boundaries},
   JOURNAL = {SIAM J. Math. Anal.},
  FJOURNAL = {SIAM Journal on Mathematical Analysis},
    VOLUME = {29},
      YEAR = {1998},
    NUMBER = {2},
     PAGES = {309--320},
      ISSN = {0036-1410,1095-7154},
   MRCLASS = {35N10 (30E25 35R35)},
  MRNUMBER = {1616562},
       DOI = {10.1137/S0036141096307217},
       URL = {https://doi.org/10.1137/S0036141096307217},
}

@article {kp98,
    AUTHOR = {Kumaresan, S. and Prajapat, Jyotshana},
     TITLE = {Serrin's result for hyperbolic space and sphere},
   JOURNAL = {Duke Math. J.},
  FJOURNAL = {Duke Mathematical Journal},
    VOLUME = {91},
      YEAR = {1998},
    NUMBER = {1},
     PAGES = {17--28},
      ISSN = {0012-7094,1547-7398},
   MRCLASS = {35J65 (35B05)},
  MRNUMBER = {1487977},
MRREVIEWER = {V.\ Raghavendra},
       DOI = {10.1215/S0012-7094-98-09102-5},
       URL = {https://doi.org/10.1215/S0012-7094-98-09102-5},
}

@article {mmp,
    AUTHOR = {Magnanini, Rolando and Molinarolo, Riccardo and Poggesi,
              Giorgio},
     TITLE = {A general integral identity with applications to a reverse
              {S}errin problem},
   JOURNAL = {J. Geom. Anal.},
  FJOURNAL = {Journal of Geometric Analysis},
    VOLUME = {34},
      YEAR = {2024},
    NUMBER = {8},
     PAGES = {Paper No. 244, 26},
      ISSN = {1050-6926,1559-002X},
   MRCLASS = {35N25 (35A23 35B35 35J05 35M12)},
  MRNUMBER = {4751743},
MRREVIEWER = {Jiabin\ Yin},
       DOI = {10.1007/s12220-024-01693-8},
       URL = {https://doi.org/10.1007/s12220-024-01693-8},
}

@article {mp2020,
    AUTHOR = {Magnanini, Rolando and Poggesi, Giorgio},
     TITLE = {Serrin's problem and {A}lexandrov's soap bubble theorem:
              enhanced stability via integral identities},
   JOURNAL = {Indiana Univ. Math. J.},
  FJOURNAL = {Indiana University Mathematics Journal},
    VOLUME = {69},
      YEAR = {2020},
    NUMBER = {4},
     PAGES = {1181--1205},
      ISSN = {0022-2518,1943-5258},
   MRCLASS = {53A10 (58E12)},
  MRNUMBER = {4124125},
MRREVIEWER = {Jo\~ao\ Lucas Marques Barbosa},
       DOI = {10.1512/iumj.2020.69.7925},
       URL = {https://doi.org/10.1512/iumj.2020.69.7925},
}

@article {mp23,
    AUTHOR = {Magnanini, Rolando and Poggesi, Giorgio},
     TITLE = {Interpolating estimates with applications to some quantitative
              symmetry results},
   JOURNAL = {Math. Eng.},
  FJOURNAL = {Mathematics in Engineering},
    VOLUME = {5},
      YEAR = {2023},
    NUMBER = {1},
     PAGES = {Paper No. 002, 21},
      ISSN = {2640-3501},
   MRCLASS = {49Q05 (35J65 53C42)},
  MRNUMBER = {4370324},
       DOI = {10.3934/mine.2023002},
       URL = {https://doi.org/10.3934/mine.2023002},
}

@article {nt,
    AUTHOR = {Nitsch, Carlo and Trombetti, Cristina},
     TITLE = {The classical overdetermined {S}errin problem},
   JOURNAL = {Complex Var. Elliptic Equ.},
  FJOURNAL = {Complex Variables and Elliptic Equations. An International
              Journal},
    VOLUME = {63},
      YEAR = {2018},
    NUMBER = {7-8},
     PAGES = {1107--1122},
      ISSN = {1747-6933,1747-6941},
   MRCLASS = {35N25 (26D15 35B50 35J05)},
  MRNUMBER = {3802818},
       DOI = {10.1080/17476933.2017.1410798},
       URL = {https://doi.org/10.1080/17476933.2017.1410798},
}

@article {p,
    AUTHOR = {Payne, Lawrence E.},
     TITLE = {Some remarks on maximum principles},
   JOURNAL = {J. Analyse Math.},
  FJOURNAL = {Journal d'Analyse Math\'ematique},
    VOLUME = {30},
      YEAR = {1976},
     PAGES = {421--433},
      ISSN = {0021-7670,1565-8538},
   MRCLASS = {35J65},
  MRNUMBER = {454338},
MRREVIEWER = {H.\ F.\ Weinberger},
       DOI = {10.1007/BF02786729},
       URL = {https://doi.org/10.1007/BF02786729},
}

@article {re77,
    AUTHOR = {Reilly, Robert C.},
     TITLE = {Applications of the {H}essian operator in a {R}iemannian
              manifold},
   JOURNAL = {Indiana Univ. Math. J.},
  FJOURNAL = {Indiana University Mathematics Journal},
    VOLUME = {26},
      YEAR = {1977},
    NUMBER = {3},
     PAGES = {459--472},
      ISSN = {0022-2518,1943-5258},
   MRCLASS = {53C40},
  MRNUMBER = {474149},
MRREVIEWER = {N.\ J.\ Hicks},
       DOI = {10.1512/iumj.1977.26.26036},
       URL = {https://doi.org/10.1512/iumj.1977.26.26036},
}

@article {re82,
    AUTHOR = {Reilly, Robert C.},
     TITLE = {Mean curvature, the {L}aplacian, and soap bubbles},
   JOURNAL = {Amer. Math. Monthly},
  FJOURNAL = {American Mathematical Monthly},
    VOLUME = {89},
      YEAR = {1982},
    NUMBER = {3},
     PAGES = {180--188, 197--198},
      ISSN = {0002-9890,1930-0972},
   MRCLASS = {53A10 (49F10 53C45)},
  MRNUMBER = {645791},
MRREVIEWER = {Hansklaus\ Rummler},
       DOI = {10.2307/2320201},
       URL = {https://doi.org/10.2307/2320201},
}

@article {ro,
    AUTHOR = {Ros, Antonio},
     TITLE = {Compact hypersurfaces with constant higher order mean
              curvatures},
   JOURNAL = {Rev. Mat. Iberoamericana},
  FJOURNAL = {Revista Matem\'atica Iberoamericana},
    VOLUME = {3},
      YEAR = {1987},
    NUMBER = {3-4},
     PAGES = {447--453},
      ISSN = {0213-2230},
   MRCLASS = {53C42 (53C45)},
  MRNUMBER = {996826},
MRREVIEWER = {Franki\ Dillen},
       DOI = {10.4171/RMI/58},
       URL = {https://doi.org/10.4171/RMI/58},
}

@article {ss,
    AUTHOR = {Silvestre, Luis and Sirakov, Boyan},
     TITLE = {Overdetermined problems for fully nonlinear elliptic
              equations},
   JOURNAL = {Calc. Var. Partial Differential Equations},
  FJOURNAL = {Calculus of Variations and Partial Differential Equations},
    VOLUME = {54},
      YEAR = {2015},
    NUMBER = {1},
     PAGES = {989--1007},
      ISSN = {0944-2669,1432-0835},
   MRCLASS = {35N25 (35B06 35B50 35J60)},
  MRNUMBER = {3385189},
MRREVIEWER = {Antonio\ Di Teodoro},
       DOI = {10.1007/s00526-014-0814-x},
       URL = {https://doi.org/10.1007/s00526-014-0814-x},
}

@article {w,
    AUTHOR = {Weinberger, Hans F.},
     TITLE = {Remark on the preceding paper of {S}errin},
   JOURNAL = {Arch. Rational Mech. Anal.},
  FJOURNAL = {Archive for Rational Mechanics and Analysis},
    VOLUME = {43},
      YEAR = {1971},
     PAGES = {319--320},
      ISSN = {0003-9527},
   MRCLASS = {31B20},
  MRNUMBER = {333221},
MRREVIEWER = {Gottfried\ Anger},
       DOI = {10.1007/BF00250469},
       URL = {https://doi.org/10.1007/BF00250469},
}

@incollection {ma2017,
    AUTHOR = {Magnanini, Rolando},
     TITLE = {Alexandrov, {S}errin, {W}einberger, {R}eilly: simmetry and
              stability by integral identities},
 BOOKTITLE = {Bruno {P}ini {M}athematical {A}nalysis {S}eminar 2017},
    SERIES = {Bruno Pini Math. Anal. Semin.},
    VOLUME = {8},
     PAGES = {121--141},
 PUBLISHER = {Univ. Bologna, Alma Mater Stud., Bologna},
      YEAR = {2017},
   MRCLASS = {35N25 (35-02 35A23 35B35 53A10)},
  MRNUMBER = {3893584},
}

@article {b2013,
    AUTHOR = {Brendle, Simon},
     TITLE = {Constant mean curvature surfaces in warped product manifolds},
   JOURNAL = {Publ. Math. Inst. Hautes \'Etudes Sci.},
  FJOURNAL = {Publications Math\'ematiques. Institut de Hautes \'Etudes
              Scientifiques},
    VOLUME = {117},
      YEAR = {2013},
     PAGES = {247--269},
      ISSN = {0073-8301,1618-1913},
   MRCLASS = {53A10 (53C45)},
  MRNUMBER = {3090261},
MRREVIEWER = {Andrew\ Bucki},
       DOI = {10.1007/s10240-012-0047-5},
       URL = {https://doi.org/10.1007/s10240-012-0047-5},
}

@article {fp2022,
    AUTHOR = {Fogagnolo, Mattia and Pinamonti, Andrea},
     TITLE = {New integral estimates in substatic {R}iemannian manifolds and
              the {A}lexandrov theorem},
   JOURNAL = {J. Math. Pures Appl. (9)},
  FJOURNAL = {Journal de Math\'ematiques Pures et Appliqu\'ees. Neuvi\`eme
              S\'erie},
    VOLUME = {163},
      YEAR = {2022},
     PAGES = {299--317},
      ISSN = {0021-7824,1776-3371},
   MRCLASS = {49Q10 (49J40 53C21 53C24 58J32 83C20)},
  MRNUMBER = {4438902},
       DOI = {10.1016/j.matpur.2022.05.007},
       URL = {https://doi.org/10.1016/j.matpur.2022.05.007},
}

@article {crv2021,
    AUTHOR = {Ciraolo, Giulio and Roncoroni, Alberto and Vezzoni, Luigi},
     TITLE = {Quantitative stability for hypersurfaces with almost constant
              curvature in space forms},
   JOURNAL = {Ann. Mat. Pura Appl. (4)},
  FJOURNAL = {Annali di Matematica Pura ed Applicata. Series IV},
    VOLUME = {200},
      YEAR = {2021},
    NUMBER = {5},
     PAGES = {2043--2083},
      ISSN = {0373-3114,1618-1891},
   MRCLASS = {53C20 (35B50 53C21 53C24 53C42)},
  MRNUMBER = {4285109},
MRREVIEWER = {Yijun\ He},
       DOI = {10.1007/s10231-021-01069-7},
       URL = {https://doi.org/10.1007/s10231-021-01069-7},
}

@article {hlmg2009,
    AUTHOR = {He, Yijun and Li, Haizhong and Ma, Hui and Ge, Jianquan},
     TITLE = {Compact embedded hypersurfaces with constant higher order
              anisotropic mean curvatures},
   JOURNAL = {Indiana Univ. Math. J.},
  FJOURNAL = {Indiana University Mathematics Journal},
    VOLUME = {58},
      YEAR = {2009},
    NUMBER = {2},
     PAGES = {853--868},
      ISSN = {0022-2518,1943-5258},
   MRCLASS = {53C42},
  MRNUMBER = {2514391},
MRREVIEWER = {C\'esar\ Rosales},
       DOI = {10.1512/iumj.2009.58.3515},
       URL = {https://doi.org/10.1512/iumj.2009.58.3515},
}

@article {m2005,
    AUTHOR = {Morgan, Frank},
     TITLE = {Planar {W}ulff shape is unique equilibrium},
   JOURNAL = {Proc. Amer. Math. Soc.},
  FJOURNAL = {Proceedings of the American Mathematical Society},
    VOLUME = {133},
      YEAR = {2005},
    NUMBER = {3},
     PAGES = {809--813},
      ISSN = {0002-9939,1088-6826},
   MRCLASS = {49Q05 (74E15)},
  MRNUMBER = {2113931},
MRREVIEWER = {Dorin\ Bucur},
       DOI = {10.1090/S0002-9939-04-07661-0},
       URL = {https://doi.org/10.1090/S0002-9939-04-07661-0},
}

@article {jwxz2023,
    AUTHOR = {Jia, Xiaohan and Wang, Guofang and Xia, Chao and Zhang, Xuwen},
     TITLE = {Alexandrov's theorem for anisotropic capillary hypersurfaces
              in the half-space},
   JOURNAL = {Arch. Ration. Mech. Anal.},
  FJOURNAL = {Archive for Rational Mechanics and Analysis},
    VOLUME = {247},
      YEAR = {2023},
    NUMBER = {2},
     PAGES = {Paper No. 25, 19},
      ISSN = {0003-9527,1432-0673},
   MRCLASS = {53A10 (49Q15 49Q20)},
  MRNUMBER = {4562813},
MRREVIEWER = {Yijun\ He},
       DOI = {10.1007/s00205-023-01861-0},
       URL = {https://doi.org/10.1007/s00205-023-01861-0},
}

@article {dks2020,
    AUTHOR = {De Rosa, Antonio and Kolasi\'nski, S\l awomir and Santilli,
              Mario},
     TITLE = {Uniqueness of critical points of the anisotropic isoperimetric
              problem for finite perimeter sets},
   JOURNAL = {Arch. Ration. Mech. Anal.},
  FJOURNAL = {Archive for Rational Mechanics and Analysis},
    VOLUME = {238},
      YEAR = {2020},
    NUMBER = {3},
     PAGES = {1157--1198},
      ISSN = {0003-9527,1432-0673},
   MRCLASS = {49Q20 (49Q15)},
  MRNUMBER = {4160798},
MRREVIEWER = {David\ Tewodrose},
       DOI = {10.1007/s00205-020-01562-y},
       URL = {https://doi.org/10.1007/s00205-020-01562-y},
}

@article {frs2024,
    AUTHOR = {Freitas, Allan and Roncoroni, Alberto and Santos, M\'arcio},
     TITLE = {A note on {S}errin's type problem on {R}iemannian manifolds},
   JOURNAL = {J. Geom. Anal.},
  FJOURNAL = {Journal of Geometric Analysis},
    VOLUME = {34},
      YEAR = {2024},
    NUMBER = {7},
     PAGES = {Paper No. 200, 17},
      ISSN = {1050-6926,1559-002X},
   MRCLASS = {35R01 (35B50 35N25 53C24 58J05)},
  MRNUMBER = {4737327},
       DOI = {10.1007/s12220-024-01650-5},
       URL = {https://doi.org/10.1007/s12220-024-01650-5},
}

@incollection {mr1991,
    AUTHOR = {Montiel, Sebasti\'an and Ros, Antonio},
     TITLE = {Compact hypersurfaces: the {A}lexandrov theorem for higher
              order mean curvatures},
 BOOKTITLE = {Differential geometry},
    SERIES = {Pitman Monogr. Surveys Pure Appl. Math.},
    VOLUME = {52},
     PAGES = {279--296},
 PUBLISHER = {Longman Sci. Tech., Harlow},
      YEAR = {1991},
      ISBN = {0-582-05590-3},
   MRCLASS = {53C40 (53C42)},
  MRNUMBER = {1173047},
MRREVIEWER = {R.\ H.\ Bowman},
}

@article {cms2016,
    AUTHOR = {Ciraolo, Giulio and Magnanini, Rolando and Sakaguchi, Shigeru},
     TITLE = {Solutions of elliptic equations with a level surface parallel
              to the boundary: stability of the radial configuration},
   JOURNAL = {J. Anal. Math.},
  FJOURNAL = {Journal d'Analyse Math\'ematique},
    VOLUME = {128},
      YEAR = {2016},
     PAGES = {337--353},
      ISSN = {0021-7670,1565-8538},
   MRCLASS = {35J25 (35B06 35B09 35B35 35B50 35B65 35J05)},
  MRNUMBER = {3481178},
MRREVIEWER = {Antonio\ Vitolo},
       DOI = {10.1007/s11854-016-0011-2},
       URL = {https://doi.org/10.1007/s11854-016-0011-2},
}

@article {km2017,
    AUTHOR = {Krummel, Brian and Maggi, Francesco},
     TITLE = {Isoperimetry with upper mean curvature bounds and sharp
              stability estimates},
   JOURNAL = {Calc. Var. Partial Differential Equations},
  FJOURNAL = {Calculus of Variations and Partial Differential Equations},
    VOLUME = {56},
      YEAR = {2017},
    NUMBER = {2},
     PAGES = {Paper No. 53, 43},
      ISSN = {0944-2669,1432-0835},
   MRCLASS = {49Q20 (49K40 49Q10 53C42 58E35)},
  MRNUMBER = {3627438},
MRREVIEWER = {Hojoo\ Lee},
       DOI = {10.1007/s00526-017-1139-3},
       URL = {https://doi.org/10.1007/s00526-017-1139-3},
}

@misc{fz2026pre,
      title={Sharp stability of Alexandrov's theorem for $C^1$ domains in the small-excess regime}, 
      author={Alessio Figalli and Yi Ru-Ya Zhang},
      year={2026},
      eprint={2606.13335},
      archivePrefix={arXiv},
      primaryClass={math.DG},
      url={https://arxiv.org/abs/2606.13335}, 
      howpublished = {Preprint, \doi{https://doi.org/10.48550/arXiv.2606.13335}}
}

@article {Lou2008,
    AUTHOR = {Lou, Hongwei},
     TITLE = {On singular sets of local solutions to {$p$}-{L}aplace
              equations},
   JOURNAL = {Chinese Ann. Math. Ser. B},
  FJOURNAL = {Chinese Annals of Mathematics. Series B},
    VOLUME = {29},
      YEAR = {2008},
    NUMBER = {5},
     PAGES = {521--530},
      ISSN = {0252-9599,1860-6261},
   MRCLASS = {35J60 (35A20 35J70 49J45)},
  MRNUMBER = {2447484},
MRREVIEWER = {Gabriella\ Bogn\'ar},
       DOI = {10.1007/s11401-007-0312-y},
       URL = {https://doi.org/10.1007/s11401-007-0312-y},
}

@article {cnt2024,
    AUTHOR = {Celentano, Antonio and Nitsch, Carlo and Trombetti, Cristina},
     TITLE = {On a {S}errin type overdetermined problem},
   JOURNAL = {Milan J. Math.},
  FJOURNAL = {Milan Journal of Mathematics},
    VOLUME = {92},
      YEAR = {2024},
    NUMBER = {2},
     PAGES = {579--590},
      ISSN = {1424-9286,1424-9294},
   MRCLASS = {35B06 (35J57 35N25)},
  MRNUMBER = {4836151},
MRREVIEWER = {Antonio\ Greco},
       DOI = {10.1007/s00032-024-00405-9},
       URL = {https://doi.org/10.1007/s00032-024-00405-9},
}

@misc{crist2026,
      title={Symmetry-breaking and local stability of a two-phase eigenvalue problem in optimal insulation}, 
      author={Emanuele Cristoforoni and Federico Villone},
      year={2026},
      eprint={2607.07218},
      archivePrefix={arXiv},
      primaryClass={math.AP},
      url={https://arxiv.org/abs/2607.07218}, 
      howpublished = {Preprint, \doi{https://doi.org/10.48550/arXiv.2607.07218}}
      
}

@article {agbomaz2025,
    AUTHOR = {Agostiniani, Virginia and Borghini, Stefano and Mazzieri,
              Lorenzo},
     TITLE = {On the {S}errin problem for ring-shaped domains},
   JOURNAL = {J. Eur. Math. Soc. (JEMS)},
  FJOURNAL = {Journal of the European Mathematical Society (JEMS)},
    VOLUME = {27},
      YEAR = {2025},
    NUMBER = {7},
     PAGES = {2705--2749},
      ISSN = {1435-9855,1435-9863},
   MRCLASS = {35N25 (35B06 35R35 53C21)},
  MRNUMBER = {4910594},
MRREVIEWER = {Giuseppe\ Scianna},
       DOI = {10.4171/jems/1422},
       URL = {https://doi.org/10.4171/jems/1422},
}

\end{document}